\title{Local and global integrability of Lie brackets}
\author[Rui L Fernandes]{Rui Loja Fernandes}
\email{ruiloja@illinois.edu}
\author{Yuxuan Zhang}
\email{yzhng172@illinois.edu}
\address{Department of Mathematics, University of Illinois at Urbana-Champaign, 1409 W. Green Street, Urbana, IL 61801, USA}
\thanks{This work was partially supported by NSF grants DMS-1710884 and DMS-2003223.}
\date{\today}
\newcommand{\R}{\ensuremath{\mathbb{R}}}
\newcommand{\C}{\ensuremath{\mathbb{C}}}
\renewcommand{\S}{\ensuremath{\mathbb{S}}}
\newcommand{\G}{\mathcal{G}}
\newcommand{\s}{\mathbf{s}}             
\renewcommand{\t}{\mathbf{t}}           
\newcommand{\gm}{\mathbf{m}}           
\newcommand{\gu}{\mathbf{u}}		
\newcommand{\ginv}{\bm{\iota}} 		
\renewcommand{\gg}{\mathfrak{g}}
\newcommand{\g}{\mathfrak{g}}
\renewcommand{\H}{\mathcal{H}}
\newcommand{\NN}{\mathcal{N}}
\newcommand{\V}{\mathcal{V}}
\newcommand{\U}{\mathcal{U}}
\newcommand{\X}{\ensuremath{\mathfrak{X}}}
\newcommand{\AsCo}{\mathcal{AC}}
\newcommand{\timesst}{\tensor[_\s]{\times}{_\t}}
\newcommand{\tto}{\rightrightarrows}
\newcommand{\Lie}{\mathscr{L}}          
\DeclareMathOperator{\id}{Id}
\DeclareMathOperator{\Assoc}{Assoc}
\DeclareMathOperator{\Ker}{Ker}
\renewcommand{\d}{\mathrm d}               
\DeclareMathOperator{\pr}{pr}      
\DeclareMathOperator{\ad}{ad}           
\DeclareMathOperator{\LieA}{Lie}     
\newcommand{\al}{\alpha}
\newcommand{\be}{\beta}
\newcommand{\pd}[1]{\frac{\partial}{\partial #1} }   
\newcommand{\Lvf}[1]{\overleftarrow{#1}} 
\newtheorem{theorem}{Theorem}[section]
\newtheorem{lemma}[theorem]{Lemma}
\newtheorem{corollary}[theorem]{Corollary}
\newtheorem{proposition}[theorem]{Proposition}
\newtheorem{example}[theorem]{Example}
\theoremstyle{definition}
\newtheorem{definition}[theorem]{Definition}
\newtheorem{remark}[theorem]{Remark}
\begin{document}

\begin{abstract}
We survey recent results on the local and global integrability of a Lie algebroid, as well as the integrability of infinitesimal multiplicative geometric structures on it.  
\end{abstract}

\dedicatory{In memory of Kirill Mackenzie}

\maketitle

\setcounter{tocdepth}{1}
\tableofcontents

\section{Introduction}
Our aim in this paper is to present an approach to integrability problems in the context of the Lie theory for algebroids and groupoids consisting of two steps:
\smallskip
\[ \framebox[3 cm]{\txt{infinitesimal\\ object}}\quad \Longrightarrow \quad   \framebox[3 cm]{\txt{local\\ object}}  \quad \Longrightarrow \quad   \framebox[3 cm]{\txt{global\\ object}} \]
\smallskip
Two examples of this are:
\begin{enumerate}[1)]
\item Given a Lie algebroid, one first integrates it to a local Lie groupoid -- which is always possible. One then tries to enlarge the local Lie groupoid to a global Lie groupoid.
\item Given a Poisson manifold, one first integrates it to a local symplectic groupoid -- which is always possible. Assuming that the local Lie groupoid extends to a global Lie groupoid, one then tries to extend the symplectic form to the global Lie groupoid.
\end{enumerate}

The idea of using a local-global approach is very old in classical Lie theory. One illustration is the use of the Baker-Campbell-Hausdorff formula to prove the Lie correspondence. However, in the case of algebroids and groupoids, due to the failure of Lie's Third Theorem, the local-global approach was understood only recently, thanks to the contributions of several authors \cite{BG16,BC12,CMS20,CMS20b,FM20,Yudilevich16}.  

This local-global approach is an alternative to the $A$-path construction approach first used in \cite{CrainicFernandes:integrability} to study the failure of Lie's Third Theorem for Lie algebroids. Although the $A$-path construction has proved to be extremely successful, we believe that the local-global approach provides a complementary perspective, gives new insights into integrability problems, and it deserves to be widely known.

The subject of this paper is very much related to Kirill Mackenzie's mathematical life. In the next section we will provide a personal historical account about these connections, as well as a detailed description of the contents and results contained in this paper.
\smallskip

\textbf{Acknowledgments.} The ideas and results presented in this paper were influenced by many conversations we had along several years with Henrique Bursztyn, Ale Cabrera, Marius Crainic, Ioan Marcut and Daan Michiels. We are indebted to all of them. We also thank the two referees for point out several errors and typos in a first version of this paper.

\section{Integrability problems and Mackenzie's contributions}
The main problem that initially drove the mathematical life of Kirill Mackenzie was the validity of Lie's Third Theorem for Lie algebroids. In the late 1960's, Pradines had sketched in a series of short papers published in the Comptes Rendus de l'Acad\'emie des Sciences de Paris \cite{Pradines66,Pradines67,Pradines68} a Lie theory for Lie algebroids and Lie groupoids. In particular, Pradines claimed that every Lie algebroid integrates to a global groupoid. With the aim of giving a complete proof of this statement, Mackenzie developed a strategy in the style of the Cartan and van Est \cite{VanEst88} cohomological proof for Lie algebras and Lie groups. He found a cohomological obstruction to integrability of \emph{transitive} Lie algebroids and he tried to show for some time that this obstruction vanished. During this period he learned that Almeida and Molino \cite{AM85}, while studying transversally parallelizable foliations, had found that the statement is actually false: there are Lie algebroids which do not integrate to Lie groupoids. Still, Mackenzie's strategy paid some dividends, since his obstruction allowed to decide which transitive Lie algebroids were actually integrable, as he explained in detail in his first monograph \cite{Mackenzie}.

The obstructions to integrability for a general (i.e., not necessarily transitive) Lie algebroid took another 20 years to be fully understood, and was finally settled by Crainic and Fernandes in \cite{CrainicFernandes:integrability}. The solution proposed in  \cite{CrainicFernandes:integrability} rests on the so-called $A$-path space construction. Given any Lie algebroid $A$ one constructs a $A$-path homotopy groupoid:
\[ \G(A)=\frac{\{A\textrm{-paths}\}}{A\text{-path homotopy}}, \]
This is a topological groupoid which has the following properties:
\begin{enumerate}[(i)]
\item $A$ is integrable if and only if $\G(A)$ inherits a quotient smooth structure from the space of $A$-paths, and
\item the obstructions to smoothness can be expressed in terms of certain monodromy groups. 
\end{enumerate}
In the transitive case, one can recover from this approach Mackenzie's cohomological obstruction. The $A$-path space construction has its roots, on the one hand, on the Poisson sigma model approach to the integrability of Poisson manifolds due to Cattaneo and Felder \cite{CaFe} and, on the other hand, on ideas of Severa \cite{Severa05} inspired by Sullivan's rational homotopy theory. Another source of inspiration for \cite{CrainicFernandes:integrability} was a similar path approach due to Duistermaat and Kolk for the special case of Lie algebras that appeared in their monograph \cite{DuistermaatKolk00}.

Several other integrability problems have filled Mackenzie's mathematical life. For example, a Lie algebroid often carries an additional geometric structure and one is interested in integrating it along with the Lie algebroid. One paradigmatic example is the cotangent Lie algebroid $T^*M$ of a Poisson manifold $(M,\pi)$. Weinstein \cite{WeinstGpd} and Karasev \cite{KM93} had found that the global object corresponding to a Poisson manifold $(M,\pi)$ is a symplectic groupoid $(\G,\Omega)$, meaning a Lie groupoid $\G\tto M$ together with a multiplicative symplectic form $\Omega\in\Omega^2(\G)$. The Lie groupoid $\G\tto M$ is an integration of $T^*M$, so the integrability of $T^*M$ is an obstruction to integrate a Poisson manifold $(M,\pi)$. The remaining question was then if a groupoid integrating $T^*M$ carries a multiplicative symplectic form. An answer to this question was provided by Mackenzie and Xu:

\begin{theorem}[MacKenzie and Xu \cite{MX00}]
A target 1-connected Lie groupoid integrating the cotangent bundle of a Poisson manifold $(M,\pi)$ is automatically a symplectic groupoid.
\end{theorem}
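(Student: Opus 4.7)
The strategy is to construct $\Omega$ from infinitesimal data via an integration theorem for multiplicative differential forms. On a source- or target-$1$-connected Lie groupoid $\G \tto M$ with Lie algebroid $A = \LieA(\G)$, multiplicative $2$-forms are in bijective correspondence with \emph{infinitesimally multiplicative (IM) $2$-forms} on $A$, i.e., bundle maps $\mu : A \to T^*M$ satisfying suitable compatibilities with the anchor and bracket, with closed multiplicative forms corresponding to IM-closed forms. This correspondence is a form of Lie's second theorem for differential forms (a van Est type duality) and is the principal tool of the proof.

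The first step is to exhibit the relevant IM $2$-form on the cotangent algebroid $T^*M$. Since the anchor is $\pi^\sharp$ and the bracket is the Koszul bracket $[\alpha,\beta] = \Lie_{\pi^\sharp\alpha}\beta - \iota_{\pi^\sharp\beta}\d\alpha$, I take $\mu = \id : T^*M \to T^*M$ and verify the two IM equations directly: the skew-symmetry equation becomes the skew-symmetry of $\pi$, while the Leibniz-type equation collapses to the very definition of the Koszul bracket. IM-closedness then follows from the Jacobi identity $[\pi,\pi] = 0$.

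The second step is to invoke the integration theorem on the target $1$-connected $\G$, producing a unique closed multiplicative $2$-form $\Omega \in \Omega^2(\G)$ integrating $\mu$; both $\d\Omega = 0$ and $\gm^*\Omega = \pr_1^*\Omega + \pr_2^*\Omega$ are inherited from the corresponding infinitesimal identities.

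The third step is to verify non-degeneracy. Since $\dim \G = 2\dim M$, it suffices to show $\Omega_g$ is non-degenerate at each $g \in \G$. A standard consequence of multiplicativity is that the kernel distribution of $\Omega$ is right-invariant under groupoid translation, so non-degeneracy at every unit implies non-degeneracy everywhere. At a unit $x$, under the canonical splitting $T_x\G = \d\gu(T_xM) \oplus A_x \cong T_xM \oplus T_x^*M$, the form $\Omega_x$ is read off from $\mu$ and the anchor: the off-diagonal block is the canonical pairing between $T_xM$ and $T_x^*M$, automatically non-degenerate, while $\pi_x$ contributes only to a diagonal block; hence $\Omega_x$ is non-degenerate for every value of $\pi_x$. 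The main obstacle is unquestionably the integration theorem used in step two: constructing a closed multiplicative $2$-form on $\G$ from the IM data $\mu$ requires either an explicit path-space realization of $\G$ (in the style of the $A$-path construction) or a cohomological van Est comparison between Lie algebroid and Lie groupoid cochains, and this is where the target $1$-connectedness hypothesis enters decisively. Once this tool is in place, the remainder of the argument reduces to algebraic bookkeeping and a short linear-algebra check.
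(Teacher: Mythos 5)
Your proposal is correct and follows essentially the route the paper itself takes: it presents the Mackenzie--Xu theorem as the $k=2$ instance of the integration of IM-forms on a target $1$-connected groupoid (Theorem \ref{thm:integration:forms:1:connected} and Theorem \ref{thm:linear:form:IM:forms}), with the cotangent algebroid carrying the closed IM $2$-form $(\sigma,\nu)=(\id,0)$ and non-degeneracy of $\Omega$ corresponding to $\sigma$ being an isomorphism, exactly as in the remark following Theorem \ref{thm:linear:form:IM:forms}. The only point worth tightening is that your block-diagonal non-degeneracy check at units silently uses $\gu^*\Omega=0$ to kill the $TM\times TM$ block, which is property (i) of multiplicative forms.
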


Note that the 1-connectedness assumption cannot be avoided: there are examples of Lie groupoids integrating the cotangent bundle of a Poisson manifold $(M,\pi)$ which do not carry any symplectic form. Mackenzie's second monograph \cite{Mackenzie:general} contains a detailed discussion of this result. One can also use the $A$-path space construction to describe explicitly the multiplicative symplectic form $\Omega$ on $\G(T^*M)$ -- see \cite{CrainicFernandes:integrability:Poisson} -- and Cattaneo and Felder in \cite{CaFe} showed that it can be viewed as an infinite dimensional symplectic quotient.

More generally, one can ask if an \emph{infinitesimal} multiplicative geometric structure, such as a differential form, a multivector field, a tensor field, a connection, etc, on an integrable Lie algebroid $A$ can be integrated to a multiplicative geometric structure on some groupoid $\G$ integrating $A$. In the last 15 years many such results have been proved (e.g., \cite{BC12,BCO09,BCWZ04,BD19,CSS15,CX04,DE19,ILX12,JO14,LSX09,Ortiz13}). Here, we will focus mainly on the case of forms for which we have the following generalization of the result of Mackenze and Xu:

\begin{theorem}[\cite{BC12,BCWZ04}]
\label{thm:integration:forms:1:connected}
Let $\G\tto M$ be a target 1-connected Lie groupoid integrating a Lie algebroid $A\to M$. There is a 1:1 correspondence:
\[ 
\left\{\txt{infinitesimal multiplicative\\ $k$-forms on $A$ \,}\right\}
\tilde{\longleftrightarrow}
\left\{\txt{multiplicative\\ $k$-forms on $\G$ \,} \right\}
\]
\end{theorem}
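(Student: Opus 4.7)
The plan is to construct maps in both directions and verify they are mutually inverse, with the core difficulty lying in the passage from infinitesimal to global, where target 1-connectedness plays its decisive role. In one direction, for a multiplicative form $\omega\in\Omega^k(\G)$ I define the bundle map $\mu_\omega:A\to\wedge^{k-1}T^*M$ by
\[
\mu_\omega(\alpha):=\gu^*\bigl(i_{\Rvf{\alpha}}\omega\bigr),\qquad \alpha\in\Gamma(A),
\]
where $\Rvf{\alpha}$ is the right-invariant vector field on $\G$ extending a section $\alpha$ of $A$ and $\gu:M\to\G$ is the unit embedding. Differentiating the multiplicativity identity $\gm^*\omega=\pr_1^*\omega+\pr_2^*\omega$ in directions transverse to the units on $\G^{(2)}$ reproduces precisely the anchor-compatibility and bracket-compatibility axioms that characterize an IM $k$-form, so this is a well-defined map from multiplicative forms to IM forms.

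For injectivity, evaluating multiplicativity on pairs of units first yields $\gu^*\omega=0$ for every multiplicative form, so along $M\subset\G$ the form $\omega$ is controlled entirely by its contractions with right-invariant vector fields, i.e.\ by $\mu_\omega$. Propagating off the units uses multiplicativity again: the functions $\omega(\Rvf{\alpha_1},\dots,\Rvf{\alpha_k})$ turn out to be $\t$-basic, hence determined by their restrictions to $M$. Since target connectedness ensures that right-invariant vector fields together with $TM$-directions span the tangent bundle along each $\t$-fiber, $\mu_\omega=0$ forces $\omega=0$.

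The main obstacle is surjectivity. Here I invoke the identification $\G\cong\G(A)$ with the $A$-path homotopy groupoid, which holds because $\G$ is target 1-connected and integrates $A$. Given an IM form $\mu$ and a class $g=[a]\in\G(A)$ represented by an $A$-path $a:I\to A$ with base path $\gamma:I\to M$, together with tangent vectors at $g$ arising from a smooth variation of $A$-paths $a_s$, I define $\omega_g$ by an explicit integral over $[0,1]$ whose integrand pairs $\mu$ with the variation data $(a,\partial_s a)$ and the base variation. The two IM axioms for $\mu$ are designed precisely to make this integral invariant under $A$-homotopy of the representative: an integration-by-parts rewrites the derivative along an $A$-homotopy as terms that cancel by the bracket-compatibility axiom, while the anchor-compatibility handles the residual boundary contribution. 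Target 1-connectedness then ensures every groupoid element has a unique $A$-homotopy class of representatives, making $\omega_g$ unambiguous on $\G$.

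The remaining verifications are comparatively direct: smoothness of $\omega$ follows from smooth dependence of the integrand on the $A$-path, multiplicativity from additivity of the integral under concatenation of $A$-paths, and the identity $\mu_\omega=\mu$ by evaluating the construction on constant $A$-paths representing units. The subtlest point throughout is the $A$-homotopy invariance, which is where the Lie bracket on $A$---not merely the anchor---enters essentially; without target 1-connectedness this step develops period obstructions along non-trivial loops in the $\t$-fibers, which is precisely the familiar source of the failure to globalize IM data to a multiplicative form.
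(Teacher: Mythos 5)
Your infinitesimal invariant is incomplete, and this breaks the correspondence in both directions. In the generality of this theorem an IM $k$-form is a \emph{pair} of bundle maps $(\sigma,\nu):A\to\wedge^{k-1}T^*M\oplus\wedge^{k}T^*M$, with $\sigma=\sigma_{_\Omega}=-\gu^*(i_{(\cdot)}\Omega)$ recording the contractions of $\Omega$ along the units and $\nu=\sigma_{_{\d\Omega}}=-\gu^*(i_{(\cdot)}\d\Omega)$ recording those of $\d\Omega$; the compatibility axiom \eqref{IM:1} cannot even be written without $\nu$. Your $\mu_\omega$ captures only the first component, and that is not enough to determine a multiplicative form. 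Concretely, let $A=\R_M$ be the trivial line bundle with zero anchor and bracket, $\G=\R\times M\tto M$ its (target $1$-connected) integration, and $\Omega=t\,p^*\gamma$ with $\gamma\in\Omega^k(M)$ nonzero, $t$ the fiber coordinate and $p:\R\times M\to M$ the projection. This $\Omega$ is multiplicative and nonzero, yet $i_{\Rvf{\al}}\Omega=0$ for every invariant vector field, so $\mu_\omega=0$. The same example exposes the flaw in your spanning argument: $\ker\d\s+\ker\d\t$ equals the preimage under $\d\t$ of the tangent space to the orbit, which is a proper subbundle of $T\G$ whenever the groupoid is not transitive (here it is the line spanned by $\partial_t$), so contractions with invariant vector fields do not control $\Omega$ in the remaining directions; it is exactly the second component $\nu$ that does.

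The surjectivity sketch has the right flavor --- an integral over $A$-paths whose $A$-homotopy invariance is forced by the IM equations is indeed how the symplectic form on $\G(T^*M)$ is described --- but as written it is an assertion rather than a proof: the integrand is never specified, and the integration-by-parts computation establishing homotopy invariance is where essentially all the work lies. The paper takes a different route: a multiplicative $k$-form is reformulated as a groupoid $1$-cocycle $\overline{\Omega}:\oplus^k_\G T\G\to\R$, an IM form as a Lie algebroid $1$-cocycle $\overline{\omega}:\oplus^k_A TA\to\R$ (equivalently, a linear $k$-form $\omega=-(\d\sigma^*\theta_{k-1}+\nu^*\theta_k)$ on $A$), and the correspondence becomes an instance of Lie's Second Theorem for the $\t$-$1$-connected groupoid $\oplus^k_\G T\G$, with multilinearity and skew-symmetry of the integrated cocycle extracted from the uniqueness clause of Lie II applied to suitable endomorphisms of $\oplus^k_\G T\G$; an alternative route integrates locally on the spray groupoid by the explicit formula of Theorem \ref{thm:local:formula} and then globalizes via the associative completion. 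To repair your argument you would need, at a minimum, to carry the pair $(\sigma_{_\Omega},\sigma_{_{\d\Omega}})$ throughout and to exhibit the integrand explicitly so that the homotopy-invariance computation can actually be performed.
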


Not surprisingly, this type of result can also be proved using the $A$-path construction and, ultimately, it amounts to the integration of an algebroid morphism to a groupoid morphism (see \cite{BC12}). 
\smallskip

As stated in the Introduction, our aim here is to present a local-global approach to such integrability problems. This approach starts with the following key result:

\begin{theorem}[\cite{CrainicFernandes:integrability}]
\label{thm:local:integration:grpd}
Every Lie algebroid integrates to a \emph{local} Lie groupoid. 
\end{theorem}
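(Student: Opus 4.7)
The plan is to produce the local Lie groupoid as a smooth neighborhood of the unit section inside the Weinstein topological groupoid $\G(A)=P(A)/{\sim}$, where $P(A)$ is the space of $A$-paths and ${\sim}$ denotes $A$-homotopy. Independently of integrability of $A$, the set $\G(A)$ is always a well-defined topological groupoid: source and target come from evaluating an $A$-path at its endpoints, multiplication from (time-rescaled) concatenation, units from constant $A$-paths, and inversion from time reversal. The only obstruction to $\G(A)$ being a Lie groupoid is the non-smoothness of the quotient, and this obstruction is controlled by the monodromy groups $\NN_x(A)\subset \g_x(A)$ sitting inside the isotropy Lie algebras. The crucial observation is that $\NN_x(A)$ is discrete away from the origin, and so it cannot obstruct smoothness in any sufficiently small neighborhood of the unit section $M\subset\G(A)$.

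Concretely, I would first fix a $TM$-connection on the vector bundle $A\to M$ (or equivalently a spray adapted to $A$) and use it to define an exponential-type map $\exp:\U\to\G(A)$, defined on a neighborhood $\U$ of the zero section of $A$, that sends $a\in A_x$ to the class of the short $A$-path determined by geodesic evolution in the chart. The key technical step is to show that this map is injective: linearizing the $A$-homotopy foliation on $P(A)$ along $M$, one sees that its tangent distribution is transverse to the image of $\exp$, and an implicit-function argument then shows that the image of $\exp$ provides a local smooth slice for ${\sim}$. Pulling back the smooth structure of $\U$ along $\exp$ endows a neighborhood $\V$ of $M$ inside $\G(A)$ with the structure of a smooth (finite-dimensional) manifold.

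Second, I would transport the groupoid operations to $\V$ and verify that they remain smooth there. In the exponential chart, the source map corresponds to the bundle projection $\U\to M$ and the target map to a smooth map computed by integrating the anchor along the $A$-geodesic, so both $\s,\t:\V\to M$ are surjective submersions. The multiplication $\gm:W\to\V$, defined on an open neighborhood $W$ of the diagonal copy of $M$ inside $\V\timesst\V$, is smooth because at the level of $A$-paths it is given by concatenation, and in the chart it can be expanded by a Baker--Campbell--Hausdorff-type formula in the algebroid variables. Units, inverses, and the associativity identity valid where all three factors lie inside $\V$ are then inherited from the corresponding identities that already hold at the set-theoretic level of $\G(A)$.

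The main obstacle is the first step: constructing the exponential chart and certifying that it gives a bijective local slice for $A$-homotopy. This is where the finite-dimensional local Lie groupoid is really extracted from the infinite-dimensional $A$-path model, and it demands a careful analysis of the $A$-homotopy foliation near constant paths together with estimates showing that the nontrivial elements of $\NN_x(A)$ stay at uniform distance from the origin. Everything else — the groupoid axioms where they apply, the smoothness of $\s,\t,\gm,\gu,\ginv$, and the compatibility of all operations — then follows formally from the groupoid structure already present on $\G(A)$ and from the smoothness carried over by $\exp$.
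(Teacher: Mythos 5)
Your overall architecture---take the $A$-path space $P(A)$, use the geodesic exponential of a connection as a finite-dimensional transversal, and transport the groupoid operations---is indeed the route of \cite{CrainicFernandes:integrability} sketched in Section \ref{sec:local:integration:grpd}. But there is a fatal gap at the step you yourself single out as crucial. You assert that $\NN_x(A)$ is ``discrete away from the origin'' and that one can establish ``estimates showing that the nontrivial elements of $\NN_x(A)$ stay at uniform distance from the origin.'' That assertion is precisely the uniform discreteness of the monodromy groups, which by the main theorem of \cite{CrainicFernandes:integrability} is \emph{equivalent} to global integrability of $A$; it fails for every non-integrable algebroid. Concretely, for $A=TM\oplus\R$ over $M=\S^2\times\S^2$ with bracket twisted by $\omega_1\oplus\lambda\,\omega_2$, $\lambda$ irrational, one has $\NN_x(A)=\Z+\lambda\Z$ dense in $\g_x=\R$, so there are arbitrarily small nonzero $w\in\g_x$ whose geodesic $A$-path is $A$-homotopic to the constant path. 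Hence $\exp_\nabla$ fails to be injective on \emph{every} neighborhood of the zero section, the image of $\exp_\nabla$ in $\G(A)$ is not a manifold near the units, and your plan of realizing the local Lie groupoid as a smooth neighborhood of $M$ \emph{inside} $\G(A)$ cannot succeed. In the language of this paper: by Theorem \ref{thm:associators} the associators of a (shrunk) local integration satisfy $\Assoc_x(G)=\NN_x(A)\cap G_x$, they lie in the kernel of the natural map $G\to\G(A)$, and they are non-discrete exactly in the non-integrable case. As written, your argument only proves the theorem for integrable $A$, where it is immediate.

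The correct constructions therefore avoid embedding the local groupoid into $\G(A)$. One either quotients the set of short geodesics $\exp_\nabla(V)$ by the equivalence generated only by \emph{small} $A$-homotopies---that is, by the holonomy pseudogroup of the finite-codimension foliation on $P(A)$ restricted to the transversal, rather than by the full ``same leaf'' relation---or, as in the spray-groupoid approach of \cite{CMS20b} recalled in Section \ref{subsection:groupoid:exponential}, one defines all structure maps and the partial multiplication directly on an open $V\subset A$ via flows of left-invariant vector fields, with no quotient at all (the real work then being well-definedness and 3-associativity of \eqref{eq:multiplication:exponential}). In either case the resulting local Lie groupoid admits a morphism onto a neighborhood of the units of $\G(A)$, but this morphism is in general non-injective, and that non-injectivity is exactly where the obstruction to global integrability resides.
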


Here by ``local" one means that the multiplication is only defined for arrows close enough to the the identity arrows (see Section \ref{sec:local:groupoid} for the precise definition). This result was also first announced by Pradines \cite{Pradines68}, but he never published a proof. If, e.g., one tries to extend the known result from Lie groups one faces the problem of finding a Baker-Campbell-Hausdorff formula. In the special context of Poisson manifolds, the existence of a local symplectic groupoid had been proved before in \cite{CDW87}, using local symplectic realizations. The original proof of Theorem \ref{thm:local:integration:grpd} in \cite{CrainicFernandes:integrability} uses an $A$-path space construction. More recently, in \cite{CMS20b} a new proof for Lie algebroids was found which can be viewed as a generalization of the Coste, Dazord and Weinstein proof: using a Lie algebroid connection $\nabla$ on $A$ the authors construct a local Lie groupoid in a neighborhood of the zero section of $A$, which they call the \emph{spray groupoid}, and which we will recall in Section \ref{sec:local:integration:grpd}.

The next obvious step is to solve the question of whether a local Lie groupoid is contained in a (global) Lie groupoid, i.e., whether a local Lie groupoid is ``globalizable'' -- sometimes called ``enlargeable''. In the case of Lie groups this problem was first studied by Mal'cev \cite{Malcev41} (see also \cite{Olver:Lie}) who showed that the lack of associativity is the only obstruction to embedding a local group in a global one. Recently, Mal'cev's Theorem was extended to realm of local Lie groupoids:

\begin{theorem}[Mal'cev's theorem for groupoids \cite{FM20}]
    A local Lie groupoid is globalizable if and only if its multiplication is globally associative.
\end{theorem}

Let us explain what is the global associativity property in the statement. The associativity axiom for a local groupoid requires that for every triple of composable arrows one has:
\[ g(hk)=(gh)k, \]
provided both sides are defined. While for a (global) Lie groupoid this implies that associativities involving 4 or more elements also hold, this is not true for a local Lie groupoid. So, for example, there exist local Lie groupoids (\cite{FM20,Olver:Lie}) in which one can find 4 elements such
that:
\[ (gh)(kl)\not = g(hk)l, \]
so that 4-associativity does not hold -- see the discussion in Section \ref{sec:associativity}. An obvious necessary condition for a local groupoid to be globalizable is that $n$-associativity holds for all $n\ge 3$, in which case we say that the local Lie groupoid is \emph{globally associative}. Mal'cev's theorem states that this condition is also sufficient.

The proof of Mal'cev's theorem uses the notion of  {\em associative completion} of a local groupoid. Given a local groupoid $G\tto M$ its associative completion is a groupoid $\AsCo(G)\tto M$ obtained by considering the well-formed words with alphabet in $G$ modulo the equivalent relation generated by contractions:
\[ (g_1,\dots,g_i,g_{i+1},\dots,g_n)\sim (g_1,\dots,g_ig_{i+1},\dots,g_n). \]
Actually, $\AsCo(-)$ is a functor: any morphism $\phi:G\to H$ of local groupoids yields a groupoid morphisms $\AsCo(\phi):\AsCo(G)\to\AsCo(H)$ -- see Section \ref{sec:associative:completion} for more details.  

In general, if $G$ is a local Lie groupoid, $\AsCo(G)$ is not a Lie groupoid, it is only a topological groupoid.  An element in the isotropy 
\[ g\in G_x=\s^{-1}(x)\cap \t^{-1}(x) \] 
is called an \emph{associator} at $x$ if there is a well-formed word $(g_1,\dots,g_n)$ which admits two sequences of contractions: one ending at $g$ and the other one ending at the identity $1_x$. The set of all associators, denoted $\Assoc(G)$, is contained in the kernel of the completion map $G\to \AsCo(G)$, and one of the main results in \cite{FM20} shows that the associators control the smoothness of $\AsCo(G)$:

\begin{theorem}[\cite{FM20}]
    For a local Lie groupoid $G$ the completion $\AsCo(G)$ is a Lie groupoid if and only if $\Assoc(G)$ is uniformly discrete in $G$.
\end{theorem}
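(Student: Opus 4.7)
The plan is to study the completion map $\iota : G \to \AsCo(G)$, which is always a continuous morphism of topological groupoids that is the identity on objects and induces the identity on Lie algebroids. The whole theorem then reduces to the claim that $\iota$ is a local homeomorphism at the unit section $M$ if and only if $\Assoc(G)$ is uniformly discrete, i.e.\ there is an open neighborhood $U \supseteq M$ in $G$ with $U \cap \Assoc(G) = M$. Once local injectivity of $\iota$ is in hand, globalizing the smooth structure is a standard argument using left translations in $\AsCo(G)$.

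For the direction ``$\AsCo(G)$ a Lie groupoid $\Rightarrow$ $\Assoc(G)$ uniformly discrete'', I would proceed as follows. If $\AsCo(G)$ carries a Lie groupoid structure, then $\iota$ is a morphism of Lie groupoids whose differential at the unit section is the identity on $A$. Applying the inverse function theorem fibrewise (along the source map), one obtains a neighborhood $U$ of $M$ in $G$ on which $\iota$ is an open embedding. Since every associator lies in $\iota^{-1}(M)$ by definition, and $\iota(1_x) = 1_x$ with $\iota|_U$ injective, we conclude $\Assoc(G) \cap U = M$.

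For the converse, suppose $\Assoc(G)$ is uniformly discrete. The crucial step, which I expect to be the main obstacle, is to promote this to local injectivity of $\iota$ on some neighborhood $V \subseteq U$ of $M$. Given $g, g' \in V$ with $\iota(g) = \iota(g')$, the strategy is to form $h := g'^{-1} g$ using the local groupoid structure (choosing $V$ small enough that this product is defined), note that $h$ lies in the isotropy and satisfies $\iota(h) = 1_{\s(g)}$, and then argue $h \in \Assoc(G)$. To do so, one must take the chain of elementary contractions witnessing the equality $[(g)] = [(g')]$ in $\AsCo(G)$ and, by a Church--Rosser style reduction of the connecting zigzag, produce a single well-formed word admitting two contraction sequences, one ending at $h$ and the other at the identity. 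Uniform discreteness then forces $h \in M$, i.e.\ $g = g'$. The care needed here lies exactly in relating the abstract equivalence relation generated by contractions to the more concrete ``two-way contraction'' definition of an associator.

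With $\iota|_V$ a local homeomorphism, I would transport the smooth structure of $V$ onto $\iota(V) \subseteq \AsCo(G)$, producing a smooth open neighborhood of the unit section. To extend the atlas globally, use that left translations $L_a$ on $\AsCo(G)$ are homeomorphisms between source fibers, so the open sets $L_a(\iota(V))$ as $a$ ranges over $\AsCo(G)$ cover the whole groupoid; compatibility of the induced charts on overlaps follows from the fact that, on each chart, the multiplication is inherited from that of $G$, and from the groupoid axioms that already hold in $\AsCo(G)$ by construction. The source, target, unit, inverse, and multiplication maps are then smooth because they agree with the corresponding maps of $G$ under these local diffeomorphisms, yielding the required Lie groupoid structure on $\AsCo(G)$.
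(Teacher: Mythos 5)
Your reduction to the behaviour of the completion map $\iota:G\to\AsCo(G)$ near the units is a reasonable starting point, and the forward direction (smoothness of $\AsCo(G)$ implies uniform discreteness) is essentially fine, but the converse has two genuine gaps. First, the step you yourself flag as the main obstacle --- passing from the zigzag of contractions and expansions witnessing $[(g)]=[(g')]$ to a \emph{single} well-formed word admitting two contraction sequences, one ending at $h=g'^{-1}g$ and one at $1_x$ --- is not a routine ``Church--Rosser style'' observation. The moves generating the equivalence on $W(G)$ are only partially defined (the contraction $\partial_i$ exists only when $(w_i,w_{i+1})\in\U$), so confluence, i.e.\ the existence of a common expansion of two equivalent words, is a substantive lemma that must be proved and is precisely where the content of the theorem lies; as written, your argument assumes its conclusion. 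Moreover, even granting local injectivity of $\iota$, you still need $\iota(V)$ to be open and $\iota|_V$ to be an open map for the \emph{quotient} topology on $\AsCo(G)$ (equivalently, that the saturation of $V$ meets each space of words $G^{(l)}$ in an open set); injectivity alone does not give a homeomorphism onto an open set.

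Second, the globalization step fails as stated. In a groupoid, left translation $L_a$ is defined only on the single $\t$-fiber $\t^{-1}(\s(a))$, so $L_a(\iota(V))$ sits inside one $\t$-fiber of $\AsCo(G)$ and is never open unless $M$ is a point; the sets $L_a(\iota(V))$ therefore do not form an atlas. The correct device is translation along local \emph{bisections}: given $a=[w_1,\dots,w_n]$, one chooses local bisections $N_i$ of $G$ through the letters $w_i$, lets one letter $h$ vary near $w_k$, and slides the remaining letters along the $N_i$ (using $\s$ and $\t$ restricted to the $N_i$) so that the word stays well formed. This is exactly the chart $h\mapsto[h_1,\dots,h_{k-1},h,h_{k+1},\dots,h_n]$ in the paper's proof, and it is in the injectivity of these charts --- not only near the units --- that uniform discreteness of $\Assoc(G)$ is used. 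So the paper's argument both supplies the missing openness/injectivity analysis at arbitrary points and replaces your left-translation step by the bisection construction that actually works for groupoids.
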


The functor $\AsCo(-)$ shares obvious similarities with the integration functor $\G(-)$, which associates to a Lie algebroid the space of $A$-paths modulo $A$-homotopies.  This is pursued much further in \cite{FM20}, which contains a result giving the precise relationship between the associators and the monodromy groups of \cite{CrainicFernandes:integrability}, expressing the obstructions to integrability. This will be explained briefly in Section \ref{sec:associators}. 

All together, the previous results establish the local-global path to integrability of Lie algebroids. So let us turn now to the integrability of infinitesimal multiplicative structures. One of the advantages of the local integration of a Lie algebroid $A$ by a spray groupoid $G$, is that it allows to find an explicit integration of any infinitesimal multiplicative form, and any other infinitesimal multiplicative objects. So one has the first step in the local-global path to integrability of such objects:

\begin{theorem}[\cite{CMS20}]
If $G\tto M$ is a spray groupoid of a Lie algebroid $A\to M$ associated with an $A$-connection, there is an \textbf{explicit} 1:1 correspondence:
\[ 
\left\{\txt{infinitesimal multiplicative\\ $k$-forms on $A$ \,}\right\}
\tilde{\longleftrightarrow}
\left\{\txt{multiplicative\\ $k$-forms on $G$ \,} \right\}
\]
\end{theorem}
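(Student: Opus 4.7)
The plan is to exploit the fact that, by construction, the spray groupoid $G$ provides an explicit diffeomorphism $\exp^\nabla : U \subset A \to G$ between a neighborhood of the zero section in $A$ and a neighborhood of the unit bisection of $G$, where $\exp^\nabla$ is the exponential of the $A$-connection $\nabla$. This allows any $k$-form on $G$ to be transported to a form on an open neighborhood of the zero section of $A$, and conversely, so the question of integration becomes the problem of writing the form on $G$ by an explicit formula in ``exponential coordinates'' starting from the infinitesimal data.

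First I would recall the description of an infinitesimal multiplicative $k$-form on $A$ as a pair $(\mu,\nu)$, with $\mu : A \to \wedge^k T^*M$ and $\nu : A \to \wedge^{k-1}T^*M$ satisfying the compatibility (Bursztyn--Cavalcanti--Weinstein) equations. Then, given $(\mu,\nu)$, I would define a $k$-form $\omega$ on $G$ by an integration along the spray geodesic joining a point $g = \exp^\nabla(a)$ to the unit $1_{\t(g)}$, of the form
\[
\omega_{\exp^\nabla(a)} = \int_0^1 \psi_t^*\bigl( F(\mu,\nu,\nabla,a,t) \bigr)\, \d t,
\]
where $\psi_t$ is the flow of the geodesic spray and $F$ is an explicit polynomial expression in $\mu$, $\nu$ and the covariant derivatives built from $\nabla$. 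This is the natural way to promote an algebroid--infinitesimal object on $A$ to a differential form on $G$, because the spray provides the only natural ``retraction'' of $G$ onto its unit bisection compatible with the source fibers.

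Next I would verify that the resulting $\omega$ is multiplicative for the spray groupoid. This is the step I expect to be the main obstacle: multiplicativity is a condition on the graph $\{(g,h,gh)\}$ of the multiplication, and the multiplication of the spray groupoid is itself defined implicitly in terms of the connection $\nabla$. The key computation is therefore to differentiate the identity $\gm^*\omega = \pr_1^*\omega + \pr_2^*\omega$ along a family of composable pairs $(\exp^\nabla(ta_1),\exp^\nabla(ta_2))$ and show, using the Bursztyn--Cavalcanti--Weinstein compatibility equations for $(\mu,\nu)$, that the derivative with respect to $t$ vanishes identically. This reduces multiplicativity of $\omega$ to an infinitesimal identity which is precisely the IM condition.

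Finally, the inverse correspondence is the standard one: given a multiplicative $k$-form $\omega$ on $G$, extract its infinitesimal data $(\mu,\nu)$ by differentiating $\omega$ transversely to $M \subset G$ along the Lie algebroid directions. One then checks, using uniqueness of solutions to the multiplicativity ODE along geodesic rays of $\nabla$, that the two assignments are mutually inverse. The content of the theorem, beyond Theorem~\ref{thm:integration:forms:1:connected}, is that no $1$-connectedness hypothesis is needed because everything is local, and the integration formula is given in closed form in terms of $\nabla$, $\mu$ and $\nu$, rather than by an abstract $A$-path integration.
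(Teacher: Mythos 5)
Your overall architecture (transport via $\exp_\nabla$, an explicit integral over the geodesic flow, extraction of the infinitesimal data at the units, uniqueness via an ODE along geodesic rays) matches the paper's, but the step you yourself flag as the main obstacle --- verifying multiplicativity --- breaks down as stated. You propose to differentiate $\gm^*\omega=\pr_1^*\omega+\pr_2^*\omega$ along the family $\bigl(\exp_\nabla(ta_1),\exp_\nabla(ta_2)\bigr)$. In the spray groupoid the target is the bundle projection $p$ while the source is $p\circ\phi^1_{X_\nabla}$, so $\s(\exp_\nabla(ta_1))=p(\phi^t_{X_\nabla}(a_1))$ moves along the geodesic base path while $\t(\exp_\nabla(ta_2))=p(a_2)$ stays fixed: the pairs $(ta_1,ta_2)$ are not composable for $0<t<1$ unless $\rho(a_1)=0$, so the identity you want to differentiate is not defined along your family. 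A direct computation of this kind would need a genuinely different retraction of the space of composable pairs onto the units, and is in any case delicate because the spray multiplication is only defined implicitly via flows of left-invariant vector fields.

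The paper sidesteps this entirely: it encodes the candidate form as a function $\overline{\Omega}:\oplus^k_G TG\to\R$, observes that the explicit formula $\overline{\Omega}=\int_0^1(\phi^t_{X_\nabla})^*\overline{\omega}\,\d t$ is precisely the standard formula for integrating a Lie algebroid $1$-cocycle to a Lie groupoid $1$-cocycle on the local groupoid $\oplus^k_G TG$ (using that in the spray groupoid the path $t\mapsto\phi^t_{X_\nabla}(a_0)$ is the $\t$-fiber path from the unit to $\exp_\nabla(a_0)$), so multiplicativity comes for free from the cocycle property, and multilinearity and skew-symmetry follow from uniqueness in Lie's Second Theorem. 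Note also that your formula is off in two respects: the integrand is the fixed linear form $-\bigl(\d\sigma^*\theta_{k-1}+\nu^*\theta_k\bigr)$ built from the tautological forms --- no covariant derivatives of the IM data appear, the connection entering only through the flow --- and your degrees for $\mu$ and $\nu$ are swapped relative to the standard IM conventions. For the converse direction and uniqueness, what you call the ``multiplicativity ODE'' is made precise in Theorem~\ref{thm:local:formula}, which you would need to establish first.
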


Here by \emph{explicit} we mean that there are local formulas describing the multiplicative form in terms of the IM-form -- see Theorem \ref{thm:local:formula}.

Finally, for the second step in the local-global path to integrability infinitesimal multiplicative objects, one needs to find if one can extend a multiplicative structure on a local groupoid $G$ to a multiplicative structure on its completion $\AsCo(G)$, assuming that the completion is smooth.  Since $\AsCo(G)$, in general, will not have 1-connected target fibers one cannot apply Theorem \ref{thm:integration:forms:1:connected}. However, perhaps somewhat surprisingly, one finds that one can always extend such objects:

\begin{theorem}[\cite{BG16}]
    Let $G$ be a local Lie groupoid and assume that $\Assoc(G)$ is uniformly discrete in $G$. Then there is a 1:1 correspondence:
\[ 
\left\{\txt{multiplicative\\ $k$-forms on $G$ \,}\right\}
\tilde{\longleftrightarrow}
\left\{\txt{multiplicative\\ $k$-forms on $\AsCo(G)$ \,} \right\}
\]
\end{theorem}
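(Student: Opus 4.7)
The plan is to exhibit the correspondence as pullback/extension along the canonical completion morphism $\pi\colon G\to \AsCo(G)$. For the easy direction, since $\pi$ is by construction a morphism of local groupoids, if $\tilde\omega$ is multiplicative on $\AsCo(G)$ then $\pi^*\tilde\omega$ is automatically multiplicative on $G$. Moreover, under the uniform discreteness hypothesis the kernel of $\pi$ over the unit section is contained in $\Assoc(G)$, which is discrete, so $\pi$ restricts to a local diffeomorphism near the units; this already shows that $\pi^*$ is injective on multiplicative forms.

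For the substantive direction, given a multiplicative $k$-form $\omega$ on $G$, the extension $\tilde\omega$ on $\AsCo(G)$ is essentially forced by multiplicativity. Every element of $\AsCo(G)$ is represented by a well-formed word $(g_1,\dots,g_n)$, and the iterated-product map $\mu_n\colon G^{(n)}\to\AsCo(G)$ from the manifold of composable $n$-tuples is a smooth map with discrete fibres (once more by uniform discreteness of $\Assoc(G)$), hence a local diffeomorphism onto its image. If a multiplicative extension $\tilde\omega$ of $\omega$ exists at all, iterating the multiplicativity identity $n-1$ times forces
\[
\mu_n^*\tilde\omega=\sum_{i=1}^n \pr_i^*\omega,
\]
where $\pr_i\colon G^{(n)}\to G$ is the $i$-th projection. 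My proposal is to take this identity as the \emph{definition} of $\tilde\omega$ on each chart and prove the resulting family of local forms glues.

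The main obstacle, and where the hypothesis on $\Assoc(G)$ really earns its keep, is showing that these local formulas descend to a single smooth $k$-form on $\AsCo(G)$. The only relations to check are those generated by the elementary contraction $(g_1,\dots,g_i,g_{i+1},\dots,g_n)\sim(g_1,\dots,g_ig_{i+1},\dots,g_n)$; under this move the right-hand side of the display above changes by $\pr_{i,i+1}^*\bigl(m^*\omega-\pr_1^*\omega-\pr_2^*\omega\bigr)$, which vanishes \emph{precisely} because $\omega$ is multiplicative on $G$. Combined with the uniform discreteness hypothesis, which by the earlier theorem of \cite{FM20} makes $\AsCo(G)$ a Lie groupoid and guarantees that the charts $G^{(n)}$ patch smoothly, this yields a well-defined smooth form $\tilde\omega$. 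Multiplicativity of $\tilde\omega$ on $\AsCo(G)$ is then immediate, since the product $[g_1,\dots,g_n]\cdot[h_1,\dots,h_m]=[g_1,\dots,g_n,h_1,\dots,h_m]$ is concatenation of words and the defining sum simply splits accordingly. Setting $n=1$ gives $\pi^*\tilde\omega=\omega$, and the forced nature of the formula gives uniqueness, completing the bijection.
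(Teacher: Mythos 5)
Your strategy is the same as the paper's: define the would-be extension on the manifolds $G^{(n)}$ of well-formed words of length $n$ by the sum $\sum_{i=1}^n\pr_i^*\omega$, check compatibility under elementary contractions using multiplicativity of $\omega$, and descend along the quotient map to $\AsCo(G)$. However, one step as written is false and leaves a real gap: the claim that $\mu_n\colon G^{(n)}\to\AsCo(G)$ has discrete fibres and is therefore a local diffeomorphism onto its image. For $n\ge 2$ this fails whenever $\dim G>\dim M$: already the fibre of $\mu_2$ through $(g,h)$ contains the positive-dimensional family $(gk,k^{-1}h)$ for small invertible $k$ with $\t(k)=\s(g)$, so $\mu_n$ is a submersion with fibres of dimension $(n-1)(\dim G-\dim M)$. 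Uniform discreteness of $\Assoc(G)$ controls the kernel of the completion map $G\to\AsCo(G)$ (and hence smoothness of $\AsCo(G)$), not the fibres of the iterated product. Consequently your display does not automatically \emph{define} a form at $[g_1,\dots,g_n]$: one must first show that $\sum_i\pr_i^*\omega$ is basic for the submersion $\mu_n$, i.e.\ annihilates $\Ker\d_w\mu_n$. Your gluing argument only addresses the discrete identifications between \emph{different} words representing the same element, not the continuous directions inside a single fibre of $\mu_n$.

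The gap is reparable, and the repair is essentially the computation you already performed, applied infinitesimally. Writing $\partial_i$ for the face (contraction) maps, one has $\Ker\d_w\mu_n=\bigoplus_i\Ker\d_w\partial_i$, and your contraction identity says precisely that $\partial_i^*\bigl(\sum_j\pr_j^*\omega\bigr)=\sum_j\pr_j^*\omega$ wherever $\partial_i$ is defined; hence $i_v\bigl(\sum_j\pr_j^*\omega\bigr)=0$ for every $v\in\Ker\d_w\partial_i$, so the form is horizontal and descends through the submersion to a smooth $k$-form on $\AsCo(G)$ (whose smooth structure is guaranteed by the discreteness hypothesis via the theorem of \cite{FM20}). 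With that correction the remainder of your argument -- multiplicativity from concatenation of words, the case $n=1$ giving $\pi^*\widetilde{\omega}=\omega$, injectivity of $\pi^*$ and uniqueness from the fact that multiplicative forms agreeing near the units agree everywhere -- matches the paper's proof.
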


We will discuss this result, including a detailed proof, in the last section of the paper. There we will also discuss how this result is related to the integrability of IM-forms for Lie groupoids with 1-connected $\t$-fibers stated in Theorem \ref{thm:integration:forms:1:connected}, and see that it leads to an independent proof of this result.
\medskip

\medskip

{\bf Notations and Conventions.}
Throughout the text, local groupoids will be denoted by Latin letters (e.g.\ $G$),
and global groupoids will be denoted by calligraphic versions (e.g.\ $\G$).
If $A$ is a Lie algebroid, then $\G(A)$ will denote the associated target-simply connected
groupoid (it is global, so it has a calligraphic symbol.)
Arrows of a groupoid compose from right to left, so that multiplication 
$gh=\gm(g,h)$ is defined if $\s(g) =\t(h)$. We also denote the identity section by $\gu:M\to\G$ and the inverse map by $\ginv:\G\to\G$. The Lie algebroid of a Lie groupoid is defined using left-invariant vector fields, so $A=\ker\d_M \t$. Finally, for a symplectic groupoid $(\G,\Omega)\tto M$ the Poisson structure on the base is the unique one making the target a Poisson map. We use as a general reference for Lie groupoids and Lie algebroids the Lecture Notes \cite{CrainicFernandes:lectures}, but note that they follow conventions different from ours.

\section{From Lie groupoids to Lie algebroids}
\label{sec:preliminaries:groupoids}

Given a Lie groupoid $\G\tto M$ with Lie algebroid $(A,[\cdot,\cdot]_A,\rho)$ an $A$-connection $\nabla$ defines an exponential map $\exp_\nabla:A\to \G$ which is a diffeomorphism in an open set $V\subset A$ containing the zero section. In this section we explain how to use this exponential map to obtain (i) a (local) groupoid structure on $V$ from the groupoid structure on $\G$ and (ii) multiplicative geometric structures on $V$ from multiplicative geometric structures on $\G$.

\subsection{Connections, sprays and exponential map}
Let  $(p:A\to M,[\cdot,\cdot]_A,\rho)$ be a Lie algebroid. Recall that an \emph{$A$-path} is a path $a:I\to A$, defined on some interval $I$, satisfying:
\[ \rho(a(t))=\frac{\d}{\d t}p(a(t)),\quad \forall t\in I. \]
We denote by $\nabla$ an $A$-connection on $A$ -- see \cite{Fernandes:holonomy} -- so that $\nabla:\Gamma(A)\times\Gamma(A)\to \Gamma(A)$ is $\R$-bilinear and satisfies:
\[ \nabla_{f\al}\be=f\nabla_{\al}\be,\qquad \nabla_{\al}f\be=f\nabla_{\al}\be+\rho(\al)(f)\be, \]
for all $f\in C^\infty(M)$ and $\al,\be\in\Gamma(A)$. Lie algebroids always carry such connections and one can choose $\nabla$ to be torsion-free, i.e., such that the torsion
\[ T^\nabla(\al,\be):=\nabla_\al\be-\nabla_\be\al-[\al,\be]_A, \]
vanishes identically.

Given an $A$-connection $\nabla$ the geodesics are the $A$-paths $a:I\to A$ satisfying:
\[ \nabla_a a=0. \]
They are the integral curves of a vector field $X_\nabla\in\X(A)$ called the geodesic spray of the connection. Fixing a chart $(U,x^i)$ for $M$ and local basis of sections $\{e_r\}$ for $\Gamma_U(A)$ one has local coordinates $(x^i,\xi^r)$ for the total space $A$. Then one finds the following local expression for the spray:
\begin{equation}
\label{eq:geodesic:spray}
X_\nabla=\sum_{i,k} \rho^{i}_k(x) \xi^k \pd{x^i}-\sum_{k,l,m}\Gamma_{kl}^m(x)\, \xi^k\, \xi^l \pd{\xi^m},
\end{equation}
where the coefficients are defined from the anchor and the connection by:
\[ \rho(e_k)=\sum_{i} \rho^{i}_k(x) \pd{x^i},\qquad \nabla_{e_k}e_l=\sum_{m}\Gamma_{kl}^m(x) e_m. \]
Using this expression, one sees that the spray $X_\nabla\in\X(A)$ satisfies two basic properties:
\begin{enumerate}
\item[(S1)] $\d_{a} p (X_\nabla|_a)=\rho(a)$, for all $a\in A$,
\item[(S2)] $(m_t)_*X_\nabla=\frac{1}{t}X_\nabla$, for all $t>0$,
\end{enumerate}
where $p:A\to M$ denotes the projection and $m_t:A\to A$ is the scalar multiplication by $t\in\R$. These properties completely characterize the spray, and in fact one checks easily that:

\begin{lemma}
Given a Lie algebroid $A\to M$, there is a 1:1 correspondence between torsion free $A$-connections and vector fields $X\in\X(A)$ satisfying (S1) and (S2).
\end{lemma}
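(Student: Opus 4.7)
The plan is to verify the bijection by working in local coordinates, where formula (\ref{eq:geodesic:spray}) already makes one direction essentially explicit. The key observation is that conditions (S1) and (S2) together pin down the local form of the vector field to exactly the expression in (\ref{eq:geodesic:spray}), but only determine the \emph{symmetric part} of the Christoffel symbols; the torsion-free requirement then fixes the antisymmetric part via the Lie bracket.

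For the forward direction, given a torsion-free $A$-connection $\nabla$, I would define $X_\nabla$ chart-by-chart by (\ref{eq:geodesic:spray}) and check that the local expressions glue. Property (S1) is immediate from inspecting the $\partial/\partial x^i$-component, while (S2) follows from the degrees of homogeneity in $\xi$: the horizontal component is linear in $\xi$ and the vertical component is quadratic in $\xi$, and one verifies directly that this is equivalent to $(m_t)_* X = \tfrac{1}{t} X$ by evaluating at a point $(x,\eta)$ after the substitution $\xi = \eta/t$.

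For the reverse direction, let $X\in\X(A)$ satisfy (S1) and (S2). In local coordinates $(x^i,\xi^r)$ the homogeneity forced by (S2), combined with smoothness, yields
\[ X = A^i_k(x)\,\xi^k\,\pd{x^i} \;-\; \tilde{\Gamma}^m_{kl}(x)\,\xi^k\xi^l\,\pd{\xi^m}, \]
with $\tilde{\Gamma}^m_{kl}=\tilde{\Gamma}^m_{lk}$. Condition (S1) then forces $A^i_k = \rho^i_k$. Setting
\[ \Gamma^m_{kl} := \tilde{\Gamma}^m_{kl} + \tfrac{1}{2}\,c^m_{kl}, \qquad \text{where } [e_k,e_l]_A = c^m_{kl}\,e_m, \]
produces Christoffel symbols whose symmetric part recovers the quadratic coefficient of $X$ and whose antisymmetric part matches the structure constants of the bracket. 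This defines a torsion-free $A$-connection $\nabla$ with $X_\nabla = X$ by construction.

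The main technical point — and the only step requiring some care — is checking that the symmetric coefficients $\tilde{\Gamma}^m_{kl}$ extracted from $X$ transform correctly under changes of local frame on $A$ and coordinates on $M$, so that they assemble into a bona fide global $A$-connection. This reduces to the standard transformation law for Christoffel symbols, adapted to the algebroid setting by the appearance of $\rho$; intrinsically, it is guaranteed by the fact that $X$ is a well-defined vector field on $A$ and (\ref{eq:geodesic:spray}) describes the unique expression compatible with (S1) and (S2). Once this is verified, the two constructions are manifestly mutual inverses.
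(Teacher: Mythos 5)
Your argument is correct, and since the paper offers no proof of this lemma (it is dismissed with ``one checks easily''), your local-coordinate computation is exactly the standard verification that is being left to the reader: (S1) fixes the horizontal part as $\rho^i_k(x)\xi^k\partial_{x^i}$, (S2) plus smoothness forces the vertical part to be quadratic in $\xi$ and hence determines only the symmetric part $\tilde{\Gamma}^m_{(kl)}$ of the Christoffel symbols, and torsion-freeness pins down the antisymmetric part as $\tfrac12 c^m_{kl}$, which is precisely why the correspondence is bijective onto \emph{torsion-free} connections. You correctly flag the one genuinely delicate point, namely that the symmetric coefficients extracted from $X$ glue into a global connection; this works because the inhomogeneous term in the change-of-frame law for the vertical components of $X$ matches the symmetrization of the Christoffel transformation law, while the inhomogeneous term in the transformation of the structure functions $c^m_{kl}$ supplies exactly the antisymmetric part.
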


We will also need the global version of (S1) and (S2) satisfied by the geodesic flow $\phi^t_{X_\nabla}$, i.e., the flow of $X_\nabla$. They are:
\begin{enumerate}
\item[(GS1)] Integral curves $t\mapsto \phi_{X_\nabla}^t(a)$ are $A$-paths;
\item[(GS2)] Whenever defined:
\begin{equation}\label{eq:flow_spray}
\phi_{X_\nabla}^t\circ m_s= m_s\circ\phi_{X_\nabla}^{st}\quad\quad (t,s\in\R).
\end{equation}
\end{enumerate}

If $\G\tto M$ is a Lie groupoid with Lie algebroid $A\to M$, an $A$-connection $\nabla$ determines a partial connection $\widetilde{\nabla}$ along the $\t$-fibers, i.e, a family of connections on the $\t$-fibers. It is completely determined by requiring that on left-invariant vector fields one has:
\[ \widetilde{\nabla}_{\Lvf{\al}}\Lvf{\be}=\Lvf{\nabla_\al\be}, \quad (\al,\be\in\Gamma(A)). \]
Then one can define the groupoid exponential map from the (ordinary) exponential maps of these fibers connections:
\[ \exp_\nabla: A\to \G,\quad a\mapsto \exp_{\widetilde{\nabla}}(a). \] 
Of course, in general, $\exp_\nabla$ is only defined in some neighborhood of the zero section of $A$. 

Notice that the partial connection $\widetilde{\nabla}$ restricts to a left-invariant connection on each isotropy group $\G_x$. The initial connection $\nabla$ can be chosen so that this restriction satisfies $\widetilde{\nabla}_X X=0$, for every left-invariant vector field $X\in\X(\G_X)$. In this case the exponential map restricts on the isotropy $\gg_x\subset A$ to the ordinary Lie group exponential map:
 \[ 
\xymatrix{
\G_x\ \ar@{^{(}->}[r] & \G \\
\gg_x\ \ar[u]^{\exp} \ar@{^{(}->}[r] & A\ar[u]_{\exp_\nabla}}
\]

\subsection{Maurer-Cartan form on a Lie groupoid}
Let us recall from \cite{FernandesStruchiner1} the construction of the (left) Maurer-Cartan form on the Lie groupoid $\G\tto M$. It is the $\t$-foliated 1-form with values in $A$ given by:
\[ \theta^\G\in\Omega^1(T^\t \G,\s^*A),\quad \theta^\G(V):=\d_g L_{g^{-1}} V \quad (V\in T^\t_g \G:=\ker\d_g\t). \]
Note that one can also viewed $\theta^\G$ as a bundle map covering the source map:
\begin{equation}
\label{eq:MC:bundle:map}
\vcenter{
\xymatrix{
T^\t\G\ar[d]_{\pr} \ar[r]^---{\theta^\G} & A\ar[d]^p \\
\G\ar[r]_{\s} & M}}
\end{equation}
which is a fiberwise linear isomorphism. 

The Maurer-Cartan form allows one to identify the left-invariant vector fields among all vector fields in $\G$ tangent to the $\t$-fibers. In fact, the following lemma is obvious from its definition:

\begin{lemma}
\label{lem:MC:form}
The left-invariant vector field determined by a section $\al\in\Gamma(A)$ is the unique vector field $\overleftarrow{\al}\in\X(\G)$ satisfying:
\[ i_{\overleftarrow{\al}}\theta^\G=\s^*\al,\quad \d\t(\overleftarrow{\al})=0. \]
\end{lemma}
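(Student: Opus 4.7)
The plan is to unpack the definitions and use the fiberwise linear isomorphism property of $\theta^\G$ stated in the paragraph containing \eqref{eq:MC:bundle:map}. I would organize the proof into an existence part and a uniqueness part.

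For existence, I would write down the standard formula for the left-invariant vector field, namely $\Lvf{\al}_g = \d_{1_{\s(g)}} L_g\cdot \al_{\s(g)}$. Since $L_g$ sends the $\t$-fiber over $\s(g)$ to the $\t$-fiber over $\t(g)$ (this is where the convention $A = \ker \d_M \t$ is used), the differential $\d L_g$ sends $\ker \d_{1_{\s(g)}} \t = A_{\s(g)}$ into $T^\t_g \G$, so $\d\t(\Lvf{\al}) = 0$ is immediate. For the Maurer-Cartan equation, I would compute directly from the definition
\[ \theta^\G(\Lvf{\al}_g) = \d_g L_{g^{-1}}\cdot \d_{1_{\s(g)}} L_g\cdot \al_{\s(g)} = \d_{1_{\s(g)}}(L_{g^{-1}}\circ L_g)\cdot \al_{\s(g)} = \al_{\s(g)}, \]
using that $L_{g^{-1}}\circ L_g$ equals left multiplication by $1_{\s(g)}$, which is the identity on the $\t$-fiber over $\s(g)$. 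This gives $i_{\Lvf{\al}}\theta^\G = \s^*\al$.

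For uniqueness, suppose $X\in\X(\G)$ satisfies the two conditions. The condition $\d\t(X) = 0$ says that $X_g \in T^\t_g\G$ for every $g$, so $X$ is a section of the bundle $T^\t\G \to \G$ on the left of the diagram \eqref{eq:MC:bundle:map}. The condition $i_X \theta^\G = \s^*\al$ then prescribes the image of $X_g$ under the fiberwise linear isomorphism $\theta^\G_g: T^\t_g \G \to A_{\s(g)}$ to be $\al_{\s(g)}$. Since this map is an isomorphism on each fiber, $X_g$ is uniquely determined, and necessarily equals $\Lvf{\al}_g$.

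There is no substantive obstacle here: the whole lemma is a direct verification that unpacks the definition of $\theta^\G$, the formula for $\Lvf{\al}$, and the already-established fact that $\theta^\G$ is a fiberwise linear isomorphism $T^\t\G \to \s^* A$. The only mild subtlety worth flagging explicitly in the write-up is which side (source or target) the left translation $L_g$ acts on, so that the reader sees why $\d\t\circ \Lvf{\al} = 0$ matches the chosen convention $A = \ker \d_M \t$.
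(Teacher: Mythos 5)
Your proof is correct and is exactly the direct verification the paper has in mind: the paper offers no written proof, stating only that the lemma ``is obvious from its definition,'' and your unpacking of $\Lvf{\al}_g=\d_{1_{\s(g)}}L_g\cdot\al_{\s(g)}$ together with the fiberwise isomorphism $\theta^\G_g:T^\t_g\G\to A_{\s(g)}$ for uniqueness is precisely that obvious argument, carried out with the correct source/target conventions. Nothing further is needed.
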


Similarly, the Maurer-Cartan form also allows to transport $A$-paths to groupoid paths: given an $A$-path $a:I\to M$ with initial point $x\in M$, there is a unique path $g:I\to \G$ in the fiber $\t^{-1}(x)$ such that:
\[ \theta(\dot{g}(t))=a(t), \quad g(0)=1_x. \]
In particular, given an $A$-connection $\nabla$ its exponential map is given by:
\[ \exp_\nabla: A\to \G,\quad \exp_\nabla(a)=g(1), \]
where $g(t)$ is the path associated with $A$-path $t\mapsto \phi_{X_\nabla}^t(a)$.  Hence, if for $t\in\R$, one defines $\exp^{t}_\nabla:A\to \G$ by setting:
\[ \exp_\nabla^t(a):=\exp_\nabla(ta), \]
one concludes that we have a commutative diagram:
\begin{equation}
\label{eq:diagram:flows:exponential}
\vcenter{
\xymatrix{
\G & \ker\d\t \ar[l]_{\pr} \ar[r]^{\theta^\G}  & A \\
& A \ar[u]^--{\phi^t_{\tilde{X}_{\nabla}}} \ar[ul]^--{\exp_\nabla^t} \ar[ur]_--{\phi^t_{X_\nabla}}}}
\end{equation}
where $\widetilde{X}_{\nabla}$ is the spray of the partial connection $\widetilde{\nabla}$ on the $\t$-fibers.

In the case of a Lie group $\G$ with Lie algebra $\gg$ there is a well-known formula for the pullback of the Maurer-Cartan form $\theta^\G$ under the exponential map:
\[ \big(\exp^*\theta^\G\big)_a(b)=\int_0^1 e^{t\ad_a} b\, \d t. \]
This formula has been extended to the case of Lie algebroids in \cite{Yudilevich16}. For that we need to recall that the flow of time-dependent section of a Lie algebroid $\al_t\in\Gamma(A)$ is a family of Lie algebroid automorphisms $\{\phi^{t,s}_{\al_t}\}$ covering the flow of the time-dependent vector field $\rho(\al_t)$:
\[
\xymatrix@R=15pt{
A\ar[d] \ar[r]^---{\phi^{t,s}_{\al_t}} & A\ar[d] \\
M\ar[r]_{\phi^{t,s}_{\rho(\al_t)}} & M}
\]
characterized by:
\[ \frac{\d}{\d t} (\phi^{t,s}_{\al_t})^*(\be)=(\phi^{t,s}_{\al_t})^*([\al,\be]),\quad \phi^{t,t}_{\al_t}=\id, \quad \quad (\be\in\Gamma(A)). \]

\begin{theorem}[\cite{Yudilevich16}]
\label{thm:MC:exponential}
Let $\G\tto M$ be a Lie groupoid with Lie algebroid $p:A\to M$ and let $\nabla$ be an $A$-connection. Then for any $a$ in a neighborhood $V\subset A$ of the zero section and any $b\in A_{p(a)}$ one has:
\[ \big(\exp_\nabla^*\theta^\G\big)_a(b)=\int_0^1 \phi^{1,t}_{\al_{t,0}}\frac{\d}{\d s}\Big|_{s=0}\al_{t,s}(p(\phi^t_{X_\nabla}(a)))\, \d t, \]
where $\al_{t,s}\in\Gamma(A)$ is any family of sections such that:
\[ \al_{t,s}(p(\phi^t_{X_\nabla}(a+sb)))=\phi^t_{X_\nabla}(a+sb). \]
\end{theorem}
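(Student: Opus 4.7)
The idea is to express the left-hand side as the value at $\tau = 1$ of the solution to a first-order linear ODE whose fundamental solution is precisely the Lie algebroid flow $\phi^{\tau, r}_{\al_t}$. Set $x := p(a)$. Since $b \in A_x$, for $|s|$ sufficiently small the curves
\[
g_s(t) := \exp_\nabla^t(a + sb), \qquad (t, s) \in [0, 1] \times (-\varepsilon, \varepsilon),
\]
all lie in the $\t$-fiber $\t^{-1}(x)$ and start at $1_x$. By the commutative diagram \eqref{eq:diagram:flows:exponential} and Lemma \ref{lem:MC:form}, we have $\theta^\G(\partial_t g_s(t)) = \phi^t_{X_\nabla}(a+sb)$, equivalently $\partial_t g_s(t) = \Lvf{\al_{t,s}}|_{g_s(t)}$ by the defining property of the family $\al_{t,s}$. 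The quantity to be computed is $\tilde V(1)$, where
\[
\tilde V(\tau) := \theta^\G\bigl(\partial_s g_s(\tau)|_{s=0}\bigr) \in A_{p(\phi^\tau_{X_\nabla}(a))}, \qquad \tau \in [0, 1],
\]
and $\tilde V(0) = 0$ because $g_s(0) = 1_x$ is independent of $s$.

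The central step is to derive a first-order linear ODE for $\tilde V$. Differentiating the identity $\theta^\G(\partial_t g_s(t)) = \al_{t,s}(\s(g_s(t)))$ in $s$ at $s = 0$, commuting $\partial_s$ with $\partial_t$ (legitimate as the map $(t, s) \mapsto g_s(t)$ is smooth into $\t^{-1}(x)$), and invoking the structure equation of the Maurer-Cartan form on the $\t$-fiber -- equivalently, the bracket closure $[\Lvf{\al}, \Lvf{\be}] = \Lvf{[\al, \be]_A}$ of left-invariant vector fields furnished by Lemma \ref{lem:MC:form} -- one finds that $\tilde V$ satisfies an inhomogeneous variation equation whose homogeneous part is exactly the ODE integrated by $\phi^{\tau, r}_{\al_t}$ (by its defining property $\frac{\d}{\d t}(\phi^{t,s}_{\al_t})^*\be = (\phi^{t,s}_{\al_t})^*[\al, \be]$) and whose forcing term at time $\tau$ is
\[
F(\tau) := \frac{\d}{\d s}\Big|_{s=0} \al_{\tau, s}\bigl(p(\phi^\tau_{X_\nabla}(a))\bigr) \;\in\; A_{p(\phi^\tau_{X_\nabla}(a))}.
\]
In the Lie group case this reduces to the well-known variation equation producing the formula $\int_0^1 e^{t\,\ad_a} b\,\d t$ stated immediately before the theorem.

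With the ODE and initial condition $\tilde V(0) = 0$ in hand, the variation-of-constants (Duhamel) formula yields
\[
\tilde V(1) = \int_0^1 \phi^{1, t}_{\al_{t, 0}}\bigl(F(t)\bigr)\, \d t,
\]
which is the claimed identity. The main technical hurdle is the ODE derivation itself: one must invoke the Maurer-Cartan equation in its $\s^*A$-valued, $\t$-foliated form, or equivalently compute fiberwise on $\t^{-1}(x)$ using left-invariant vector fields, being careful that the value of $\theta^\G$ at a point depends only on the source of that point so that the bracket operator $[\al_\tau, \cdot]_A$ indeed emerges as the generator of the homogeneous propagator. Once the ODE is in place, the remainder is a routine integration that generalizes the classical Lie group proof.
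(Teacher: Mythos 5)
The paper itself does not prove this theorem; it is quoted from \cite{Yudilevich16} without argument, so there is no internal proof to compare against. Your strategy is the correct and, as far as I can tell, the standard one: realize the left-hand side as $\theta^\G$ applied to the first variation $S(\tau)=\partial_s g_s(\tau)|_{s=0}$ of the family of curves $g_s(t)=\exp^t_\nabla(a+sb)$, observe that each $g_s$ is an integral curve of the time-dependent left-invariant vector field $\Lvf{\al_{t,s}}$ (your identification $\theta^\G(\partial_t g_s(t))=\phi^t_{X_\nabla}(a+sb)$ is correct by diagram \eqref{eq:diagram:flows:exponential} and Lemma \ref{lem:MC:form}, using $p\circ\theta^\G=\s\circ\pr$), and then apply variation of constants with $S(0)=0$. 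All the bookkeeping checks out, including the target fiber of the answer: $\s(g_0(t))=p(\phi^t_{X_\nabla}(a))$ is an integral curve of $\rho(\al_{t,0})$, so $\phi^{1,t}_{\al_{t,0}}$ carries $F(t)\in A_{p(\phi^t_{X_\nabla}(a))}$ into $A_{\s(\exp_\nabla(a))}$, as it must.

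The one substantive step you assert rather than derive is the identification of the homogeneous propagator, and since that is really the entire content of the theorem it should be written out. A clean way to do it that avoids invoking a Maurer--Cartan structure equation (which the paper never states): let $\Phi^{t,r}$ denote the flow of the time-dependent vector field $\Lvf{\al_{t,0}}$ on $\t^{-1}(x)$. The first-variation (Duhamel) formula for a family of integral curves gives
\[ S(1)=\int_0^1 (\Phi^{1,r})_*\Big(\partial_s\big|_{s=0}\Lvf{\al_{r,s}}\big(g_0(r)\big)\Big)\,\d r, \]
and the relation $[\Lvf{\al},\Lvf{\be}]=\Lvf{[\al,\be]_A}$ together with the defining ODE of $\phi^{t,r}_{\al_{t,0}}$ yields $(\Phi^{t,r})_*\Lvf{\gamma}=\Lvf{(\phi^{t,r}_{\al_{t,0}})_*\gamma}$, hence $\theta^\G\circ(\Phi^{t,r})_*=\phi^{t,r}_{\al_{t,0}}\circ\theta^\G$ on $\ker\d\t$; applying $\theta^\G$ to the display produces exactly the stated integral, since $\theta^\G\big(\partial_s|_{s=0}\Lvf{\al_{r,s}}(g_0(r))\big)=\partial_s|_{s=0}\al_{r,s}(p(\phi^r_{X_\nabla}(a)))$ (a derivative of a curve in a fixed fiber of $A$, because the base point $g_0(r)$ does not move with $s$). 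With that supplement your argument is complete; be aware only that the classical formula $\int_0^1 e^{t\ad_a}b\,\d t$ quoted before the theorem corresponds to a particular left/right convention, so the Lie-group specialization may come out as $\int_0^1 e^{-t\ad_a}b\,\d t$ depending on how one unwinds $(\phi^{1,t}_a)^*$ versus $\phi^{1,t}_a$.
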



\subsection{Groupoid tubular neighborhoods}
\label{subsection:groupoid:exponential}
Let $\G$ be a Lie groupoid with Lie algebroid $A$ and fix an $A$-connection $\nabla$. One can choose a neighborhood $V\subset A$ of the zero section where the exponential map restricts to a diffeomorphism onto an open $W\subset \G$ containing the units:
\[ A\supset \xymatrix{V\ar[rr]^{\exp_\nabla}_{\cong}&& W}\subset \G \]
Using this diffeomorphism, one can induce a (local) groupoid structure on $V$. First, by further restricting $V$, one can assume that $W$ is preserved by the groupoid inversion. Then we obtain source, target, inverse and unit maps on $V$ such that
\[
\xymatrix{V \ar@(dl, ul)^{\ginv} \ar@<0.25pc>[d]^{\s} \ar@<-0.25pc>[d]_{\t} \ar[r]^--{\exp_\nabla} &\G \ar@(dr, ur)_{\ginv} \ar@<0.25pc>[d]^{\s} \ar@<-0.25pc>[d]_{\t} \\ M\ar@/_1pc/[u]_{\gu} \ar@{=}[r] & M \ar@/_1pc/[u]_{u}}
\]
They are given purely in terms of Lie algebroid data as follows: the open set $V\subset A$ is invariant under the map $a\mapsto -\phi^1_{X_\nabla}(a)$ and one has:
\begin{itemize}
\item[-] The unit section is the zero section $\gu:M\to V\subset A$, $x\mapsto 0_x$;
\item[-] The target map is the bundle projection $\t:=p:V\to M$;
\item[-] The source map is $\s:=p\circ\phi^1_{X_\nabla}:V\to M$;
\item[-] The inverse map $\ginv: V\to V$ is given by $a\mapsto -\phi^1_{X_\nabla}(a)$.
\end{itemize}

Finally, one needs to push the multiplication to $V$. It will be defined only on the open set:
\[ \U:=\{(v_1,v_2)\in V\timesst V: \gm(\exp_\nabla(v_1),\exp_\nabla(v_2))\in\exp_\nabla(V)\}. \]
and it is a map $\gm:\U\to V$ making the following diagram commute:
\[
\xymatrix{\U \ar[d]_{\gm} \ar[rr]^--{\exp_\nabla\times\exp_\nabla} & & \G\timesst\G \ar[d]^{\gm} \\ 
V\ar[rr]_{\exp_\nabla} & & \G}
\]
Hence, we obtain a \emph{local} Lie groupoid. These will be discussed in the next section. 

For now, just like for the other structure maps, we would like to express multiplication in $V$ purely in terms of Lie algebroid data. For that we can apply Theorem \ref{thm:MC:exponential} to first express the pullback of the Maurer-Cartan form:
\[ \theta^A:=\exp_{\nabla}^*\theta^\G, \]
exclusively in terms of Lie algebroid data. This allows us to define left-invariant vector fields in $V$, without referring to $\G$: given $\al\in\Gamma(A)$ the corresponding left-invariant vector field $\overleftarrow{\al}\in\X(A)$ is the unique vector field satisfying:
\[ 
\left\{
\begin{array}{l}
i_{\overleftarrow{\al}}\theta^A=\s^*\al \\
\\
\d\t(\overleftarrow{\al})=0 
\end{array}
\right.\quad \Longleftrightarrow\quad 
\left\{
\begin{array}{l}
i_{\overleftarrow{\al}}\theta^A=(p\circ\phi^1_{X_\nabla})^*\al \\
\\
\d p(\overleftarrow{\al})=0 
\end{array}
\right.
\]
To define the product of  $v_1,v_2\in\U$ with $\s(v_1)=p\circ\phi^1_{X_\nabla}(v_1)=p(v_2)=\t(v_2)$, we first find a section $\al\in\Gamma(A)$ such that:
\[ \phi^1_{\overleftarrow{\al}}(0_{p(v_2)})=v_2, \]
and then set:
\begin{equation}
\label{eq:multiplication:exponential}
\gm(v_1,v_2):=\phi^1_{\overleftarrow{\al}}(v_1).
\end{equation}
Notice that this uses only algebroid data.

\subsection{Local formulas for multiplicative stuctures}
Given some geometric structure on a Lie groupoid $\G\tto M$ one can use the exponential map of a connection to transfer it to a neighborhood of the identity section of the Lie algebroid $A\to M$ of $\G$. In general, the resulting structure cannot be described purely in terms of Lie algebroid data. One needs the geometric structure to be compatible with the groupoid multiplication. We will consider here only the case of differential forms and refer to \cite{BD19,CMS20} for other geometric structures.

Recall that a differential form $\Omega\in\Omega^k(\G)$ is called \emph{multiplicative} if:
\[ \gm^*\Omega=\pr_1^*\Omega+\pr^*_2\Omega. \]
We recall the following basic properties of a multiplicative form (see \cite{BC12}):
\begin{enumerate}[(i)]
\item $\Omega$ vanishing on vectors tangent to the identity section: $\gu^*\Omega=0$;
\item $\Omega$ is anti-invariant under inversion: $\ginv^*\Omega=-\Omega$;
\item The $\t$-fibers and $\s$-fibers are $\Omega$-orthogonal: 
\[ \Omega(X,Y)=0 \quad \text{ if }\quad \d_g \t(X)=\d_g \s(Y)=0; \]
\item $i_{\overleftarrow{\al}}\Omega$ is a left-invariant form, for any left-invariant vector field $\overleftarrow{\al}$. Similarly, with left-invariant replaced by right-invariant.
\end{enumerate}
The last property has the following more precise version. Consider the vector bundle map:
\[ \sigma_{_\Omega}:A\to \wedge^{k-1}T^*M,\quad \al\mapsto -\gu^*(i_\al\Omega). \]
Then one finds that for any $\al\in\Gamma(A)$
\begin{equation} 
\label{eq:multipl:left:invariant:form}
i_{\overleftarrow{\al}}\Omega=-\s^*\sigma_{_\Omega}(\al),\qquad i_{\overrightarrow{\ginv_*\al}}\Omega=\t^*\sigma_{_\Omega}(\al).
\end{equation}
The relevance of the map $\sigma_{_\Omega}$ is also shown by the following result, which is essentially due to \cite{CMS20}:

\begin{theorem}
\label{thm:local:formula}
Let $\G\tto M$ be a Lie groupoid with Lie algebroid $A$ and fix an $A$-connection $\nabla$. If $\Omega\in\Omega^k(\G)$  is a multiplicative form then:
\begin{equation}
\label{eq:pullback:mult:form}
(\exp_\nabla)^*\Omega:=-\int_0^1 (\phi^t_{X_\nabla})^*\big(\d\sigma_{_\Omega}^*\theta_{k-1}+\sigma_{_{\d\Omega}}^*\theta_{k} \big)\, \d t,
\end{equation}
where $\theta_k\in\Omega^k(\wedge^{k}T^*M)$ denotes the tautological form.
\end{theorem}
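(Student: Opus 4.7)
The plan is to realize $(\exp_\nabla)^*\Omega$ as the time-$1$ value of a family obtained by pulling back $\Omega$ along the homotopy
\[ F_t:A\to\G,\qquad F_t(a):=\exp_\nabla(ta),\quad t\in[0,1], \]
which interpolates between $F_0=\gu\circ p$ and $F_1=\exp_\nabla$. By property (i) of a multiplicative form, $\gu^*\Omega=0$, so $F_0^*\Omega=0$, and the standard homotopy/fibre-integration identity yields
\[ (\exp_\nabla)^*\Omega = \int_0^1 F_t^*(\mathcal{L}_{Y_t}\Omega)\,\d t, \]
where $Y_t$ is the time-dependent vector field on $\G$ (locally defined near the image of $F_t$) characterized by $Y_t\circ F_t=\tfrac{\d}{\d t}F_t$.

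The next step is to identify $Y_t$ with an explicit left-invariant vector field via the Maurer-Cartan form. The commutative diagram \eqref{eq:diagram:flows:exponential} expresses $F_t=\pr\circ\phi^t_{\widetilde{X}_\nabla}$, so $\tfrac{\d}{\d t}F_t(a)\in\ker\d\t$ and $\theta^\G\bigl(\tfrac{\d}{\d t}F_t(a)\bigr)=\phi^t_{X_\nabla}(a)$. By Lemma \ref{lem:MC:form}, in a neighborhood of any point $F_t(a_0)$ one may write $Y_t=\overleftarrow{\al_t}$ for a local section $\al_t\in\Gamma(A)$ satisfying $\al_t(\s(F_t(a)))=\phi^t_{X_\nabla}(a)$. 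Applying Cartan's formula together with the multiplicativity identity \eqref{eq:multipl:left:invariant:form} to both $\Omega$ and to the multiplicative form $\d\Omega$ gives
\[ \mathcal{L}_{Y_t}\Omega=-\d\bigl(\s^*\sigma_{_\Omega}(\al_t)\bigr)-\s^*\sigma_{_{\d\Omega}}(\al_t). \]

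The final step is to recognize the pulled-back forms as tautological-form expressions on $A$. Using $\s\circ F_t=p\circ\phi^t_{X_\nabla}$ together with the defining property of $\al_t$, unpacking the definition of the tautological form $\theta_{k-1}\in\Omega^{k-1}(\wedge^{k-1}T^*M)$ (noting $\pi\circ\sigma_{_\Omega}=p$) shows
\[ F_t^*\bigl(\s^*\sigma_{_\Omega}(\al_t)\bigr)=(\phi^t_{X_\nabla})^*\bigl(\sigma_{_\Omega}^*\theta_{k-1}\bigr), \]
and analogously $F_t^*\bigl(\s^*\sigma_{_{\d\Omega}}(\al_t)\bigr)=(\phi^t_{X_\nabla})^*\bigl(\sigma_{_{\d\Omega}}^*\theta_{k}\bigr)$. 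Substituting into the integrand and integrating over $[0,1]$ then produces the claimed formula. The main subtlety worth flagging is that the section $\al_t$ realizing $Y_t$ depends on the base point, so $Y_t$ is not globally the left-invariant field attached to any single section of $A$; however, the pullback $F_t^*\s^*\sigma_{_\Omega}(\al_t)$ at $a$ depends only on $\al_t(\psi_t(a))=\phi^t_{X_\nabla}(a)$, and it is precisely this pointwise dependence on $\phi^t_{X_\nabla}(a)$ that is packaged intrinsically by the expression $(\phi^t_{X_\nabla})^*\sigma_{_\Omega}^*\theta_{k-1}$, so the local ambiguity is harmless.
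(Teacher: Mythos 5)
Your argument is correct and is essentially the paper's proof: both come down to Cartan's magic formula along the homotopy $\exp^t_\nabla$ combined with the pointwise identity $i_{\overleftarrow{\al}}\Omega=-\s^*\sigma_{_\Omega}(\al)$ (applied to $\Omega$ and to $\d\Omega$), with the tautological form packaging the pointwise dependence on $\theta^\G(Y_t)$; the paper merely organizes the same computation by lifting to $\ker\d\t$ and using the time-independent spray $\widetilde{X}_\nabla$ in place of your time-dependent $Y_t$. The one imprecision is the claim that $Y_t=\overleftarrow{\al_t}$ for a local section near $F_t(a_0)$ -- no single section can satisfy $\al_t(\s(F_t(a)))=\phi^t_{X_\nabla}(a)$ because $a\mapsto\s(F_t(a))$ is not injective -- but your closing paragraph correctly explains why only the pointwise value of $\theta^\G(Y_t)$ enters, which is exactly how the paper establishes its key identity \eqref{eq:pullback:mult:form:3}.
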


\begin{remark}
\label{rem:tautological:form}
The \emph{tautological form} $\theta_k\in\Omega^k(\wedge^{k}T^*M)$ is defined for $\al\in \wedge^{k}T^*M$ and $v_1,\dots,v_k\in T_{\al}(\wedge^{k}T^*M)$ by:
\[ (\theta_k)_\al(v_1,\dots,v_k)=\al(\d_\al p(v_1),\dots,\d_\al p(v_k)), \]
where $p:\wedge^{k}T^*M\to M$ denotes the bundle projection. This form generalizes the Liouville 1-form and, similarly to the latter, is characterized by the property that:
\[ \al^*\theta_k=\al, \quad \forall\, \al\in\Omega^k(M). \]
\end{remark}

\begin{proof}
We claim that \eqref{eq:pullback:mult:form} is equivalent to:
\begin{equation}
\label{eq:pullback:mult:form:2}
\frac{\d}{\d t}(\exp_\nabla^t)^*\Omega=-(\phi^t_{X_\nabla})^*\big(\d\sigma_{_\Omega}^*\theta_{k-1}+\sigma_{_{\d\Omega}}^*\theta_{k} \big).
\end{equation}
Clearly, if this equation holds, then integrating in $t$ and using that $\gu^*\Omega=0$, we obtain \eqref{eq:pullback:mult:form}. For the converse, if \eqref{eq:pullback:mult:form} holds, we find that:
\begin{align*} 
-(&\exp_\nabla^t)^*\Omega=-m_t^*(\exp_\nabla)^*\Omega
=m_t^*\int_0^1 (\phi^s_{X_\nabla})^*\big(\d\sigma_{_\Omega}^*\theta_{k-1}+\sigma_{_{\d\Omega}}^*\theta_{k} \big) \d s\\
&=\int_0^1 (\phi^s_{X_\nabla}\circ m_t)^*\big(\d\sigma_{_\Omega}^*\theta_{k-1}+\sigma_{_{\d\Omega}}^*\theta_{k} \big) \d s\\
&=\int_0^1 (m_t\circ\phi_{X_\nabla}^{ts})^*\big(\d\sigma_{_\Omega}^*\theta_{k-1}+\sigma_{_{\d\Omega}}^*\theta_{k} \big) \d s\\
&=\int_0^1 (\phi_{X_\nabla}^{ts})^*m_t^*\big(\d\sigma_{_\Omega}^*\theta_{k-1}+\sigma_{_{\d\Omega}}^*\theta_{k} \big) \d s\\
&=\int_0^1 (\phi_{X_\nabla}^{ts})^*\big(\d\sigma_{_\Omega}^*\theta_{k-1}+\sigma_{_{\d\Omega}}^*\theta_{k} \big) \, t\, \d s=\int_0^t (\phi_{X_\nabla}^t)^*\big(\d\sigma_{_\Omega}^*\theta_{k-1}+\sigma_{_{\d\Omega}}^*\theta_{k} \big) \d t.
\end{align*}
where we used relation \eqref{eq:flow_spray} for the flow. Differentiating both sides, we recover \eqref{eq:pullback:mult:form:2}.

Denoting by $\widetilde{X}_\nabla$ the geodesic spray of the (partial) connections on the $\t$-fibers and using the commutative diagram \eqref{eq:diagram:flows:exponential}, we can compute the left-hand side of \eqref{eq:pullback:mult:form:2} as follows:
\begin{align*} 
\frac{\d}{\d t}(\exp_\nabla^t)^*\Omega&
=\frac{\d}{\d t}(\phi^t_{\widetilde{X}_\nabla})^*\pr^*\Omega\\
&=(\phi^t_{\widetilde{X}_\nabla})^*\Lie_{\widetilde{X}_\nabla}\pr^*\Omega
=(\phi^t_{\widetilde{X}_\nabla})^*\big(\d i_{\widetilde{X}_\nabla}\pr^*\Omega+i_{\widetilde{X}_\nabla}\pr^*\d\Omega\big)
\end{align*}
On the other hand, for the right-hand side of \eqref{eq:pullback:mult:form:2} we find using the same diagram \eqref{eq:diagram:flows:exponential}:
\begin{align*}
-(\phi^t_{X_\nabla})^*\big(\d\sigma_{_\Omega}^*\theta_{k-1}+\sigma_{_{\d\Omega}}^*\theta_{k} \big)&=
-(\phi^t_{\widetilde{X}_\nabla})^*(\theta^\G)^*\big(\d\sigma_{_\Omega}^*\theta_{k-1}+\sigma_{_{\d\Omega}}^*\theta_{k} \big)\\
&=-(\phi^t_{\widetilde{X}_\nabla})^*\big(\d(\theta^\G)^*\sigma_{_\Omega}^*\theta_{k-1}+(\theta^\G)^*\sigma_{_{\d\Omega}}^*\theta_{k} \big).
\end{align*}
We conclude that \eqref{eq:pullback:mult:form:2}  holds if one can show that for any multiplicative form $\Omega$ one has on $\ker\d\t$ the equality of forms:
\begin{equation}
\label{eq:pullback:mult:form:3}
i_{\widetilde{X}_\nabla}\pr^*\Omega=-(\theta^\G)^*\sigma_{_\Omega}^*\theta_{k-1}.
\end{equation}
For this we observe that for any left-invariant vector field $\overleftarrow{\al}$ and $g\in\G$ we have:
\begin{align*} 
\big(i_{\widetilde{X}_\nabla}\pr^*\Omega&+(\theta^\G)^*\sigma_{_\Omega}^*\theta_{k-1}\big)\big|_{\overleftarrow{\al}_g}(v_1,\dots,v_{k-1})=\\
&=\Omega_g\big(\d_{\overleftarrow{\al}_g}\pr(\widetilde{X}_\nabla),\d_{\overleftarrow{\al}_g}\pr(v_1),\dots,\d_{\overleftarrow{\al}_g}\pr(v_{k-1})\big)+\\
&\hskip 1 in +\theta_{k-1}\big(\d_{\overleftarrow{\al}_g}(\sigma_{_\Omega}\circ\theta^\G)(v_1),\dots,\d_{\overleftarrow{\al}_g}(\sigma_{_\Omega}\circ\theta^\G)(v_{k-1})\big)\\
&=\Omega_g({\overleftarrow{\al}_g},\d_{\overleftarrow{\al}_g}\pr(v_1),\dots,\d_{\overleftarrow{\al}_g}\pr(v_{k-1}))+\\
&\hskip 1 in +\sigma_{_\Omega}(\al)\big(\d_g p(\d_{\overleftarrow{\al}_g}\theta^\G(v_1)),\dots,\d_g p(\d_{\overleftarrow{\al}_g}\theta^\G(v_{k-1}))\big)\\
&=(\d_{\overleftarrow{\al}_g}\pr)^*i_{\overleftarrow{\al}_g}\Omega(v_1,\dots,v_{k-1})+\\
&\hskip 1 in +\sigma_{_\Omega}(\al)\big(\d_g\s\circ\d\pr_{\overleftarrow{\al}_g}(v_1),\dots,\d_g\s\circ\d\pr_{\overleftarrow{\al}_g}(v_{k-1})\big)\\
&=(\d_{\overleftarrow{\al}_g}\pr)^*(i_{\overleftarrow{\al}_g}\Omega+\s^*\sigma_{_\Omega}(\al))(v_1,\dots,v_{k-1}),
\end{align*} 
where we used the definition of the tautological form -- see Remark \ref{rem:tautological:form} -- and the fact that for the Maurer-Cartan form one has $p\circ\theta^\G=\s\circ\pr$  -- see diagram \eqref{eq:MC:bundle:map}. Finally, using \eqref{eq:multipl:left:invariant:form}, we conclude that the last expression vanishes, so \eqref{eq:pullback:mult:form:3} holds, and the proof is complete.
\end{proof}

\subsection{Multiplicative forms as groupoid 1-cocycles}
A very useful approach to multiplicative forms, first proposed in \cite{BC12}, is to viewed them as groupoid morphisms into the abelian group $(\R,+)$, i.e., as groupoid 1-cocycles. 

First, recall that given a Lie groupoid $\G\tto M$ one can apply the tangent functor resulting in the tangent Lie groupoid $T\G\tto TM$. Less obvious, one can also form direct sums of the tangent groupoid, obtaining for each $k$ a Lie groupoid:
\[ \oplus^k_\G T\G\tto \oplus^k_M TM,\]
with source, target, and unit given by:
\begin{align*} 
\oplus^k\d\s(v_1,\dots,v_k)&=(\d\s(v_1),\dots,\d\s(v_k)),\\ 
\oplus^k\d\t(v_1,\dots,v_k)&=(\d\t(v_1),\dots,\d\t(v_k)),\\ 
\oplus^k\d\gu(w_1,\dots,w_k)&=(\d\gu(w_1),\dots,\d\gu(w_k)),
\end{align*}
and multiplication defined by:
\[ \oplus^k\d\gm((v_1,\dots,v_k),(v'_1,\dots,v'_k)=(\d\gm(v_1,v'_1),\dots,\d\gm(v_k,v'_k)). \]

Next, a differential form $\Omega\in\Omega^k(\G)$ can be viewed as a map:
\[ \overline{\Omega}:\oplus^k_\G T\G\to \R,\quad (v_1,\dots,v_k)\mapsto \Omega(v_1,\dots,v_k), \]
and one checks immediately that the multiplicativity condition for $\Omega$ amounts to:
\[ \overline{\Omega}(\oplus^k\d\gm((v_1,\dots,v_k),(v'_1,\dots,v'_k)))=
\overline{\Omega}(v_1,\dots,v_k)+\overline{\Omega}(v'_1,\dots,v'_k). \]
In other words, $\Omega$ is multiplicative if and only if $\overline{\Omega}$ is a groupoid 1-cocycle for $\oplus^k_\G T\G$.

Naturally, one would like to know what is the induced Lie algebroid 1-cocycle. For this, recall that applying the tangent functor to a Lie algebroid $p:A\to M$ also gives a tangent Lie algebroid. The bundle is
\[ \d p:TA\to TM, \]
while the anchor is given by:
\[ \d\rho:TA\to T(TM). \]
The Lie bracket can be described as follows. First, one notes that a section $\al\in \Gamma(A)$ induces two different types of sections of $TA$:
\begin{enumerate}[(a)]
\item A \emph{linear} section $\d\al:TM\to TA$;
\item A \emph{core} section $\widehat{\al}:TM\to TA$:
\[ \widehat{\al}(v_x):=v_x+\al(x)\in T_{x}M\oplus A_x\simeq T_{0_x}A.\]
\end{enumerate}
These sections generate all other sections of $TA\to TM$. One defines the Lie bracket by requiring that for any sections $\al,\be\in\Gamma(A)$:
\[  [\d\al,\d\be]_{TA}=\d[\al,\be]_A,\qquad [\d\al,\widehat{\be}]_{TA}=\widehat{[\al,\be]_{A}}, \qquad [\widehat{\al},\widehat{\be}]_{TA}=0. \]

One checks easily that given a Lie groupoid $\G\tto M$, the tangent groupoid $T\G\tto TM$ has Lie algebroid the tangent algebroid, i.e., one has:
\[ \LieA(T\G)=T\LieA(\G). \]

One can also form direct sums $\oplus^k_A TA$ of the tangent Lie algebroid $TA\to TM$, obtaining for each $k$ a Lie algebroid with vector bundle:
\[ \oplus^k_A TA\to  \oplus^k_MTM, \]
and with anchor and bracket defined componentwise. The Lie functor commutes with taking direct sums, so that:
\[ \LieA(\oplus^kT\G)=\oplus^k\LieA(T\G)=\oplus^kT\LieA(\G). \]
It follows from the results of \cite{BC12} that:

\begin{proposition}
\label{prop:multiplicative:form:bundle:map}
For $k\ge 1$, a form $\Omega\in\Omega^k(\G)$ is multiplicative if and only if the map $\overline{\Omega}:\oplus^k_\G T\G\to \R$ is a groupoid 1-cocycle. In this case, the induced Lie algebroid 1-cocycle $\LieA(\overline{\Omega})$ is the map 
\[\overline{\omega}:\oplus^k_A TA\to \R \] 
that corresponds to the $k$-form on the total space of the bundle $A$ given by:
\begin{equation}
\label{eq:multi:infinitesimal:multi:form}
\omega:=-\big(\d\sigma_{_\Omega}^*\theta_{k-1}+\sigma_{_{\d\Omega}}^*\theta_k\big)\in\Omega^k(A).
\end{equation}
\end{proposition}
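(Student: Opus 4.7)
The first assertion---that $\Omega\in\Omega^k(\G)$ is multiplicative if and only if $\overline{\Omega}$ is a groupoid $1$-cocycle on $\oplus^k_\G T\G$---is a direct unwinding of definitions. Since the multiplication on $\oplus^k_\G T\G\tto\oplus^k_M TM$ is $\oplus^k\d\gm$ applied componentwise, the $1$-cocycle condition on $\overline{\Omega}$ evaluated at a composable pair reads
\[ \Omega_{gh}\bigl(\d\gm(v_1,v'_1),\dots,\d\gm(v_k,v'_k)\bigr) = \Omega_g(v_1,\dots,v_k) + \Omega_h(v'_1,\dots,v'_k), \]
which is just a reformulation of $\gm^*\Omega = \pr_1^*\Omega + \pr_2^*\Omega$. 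This step presents no real difficulty.

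For the identification of the infinitesimal cocycle with the $k$-form $\omega$, my plan is to exploit Theorem \ref{thm:local:formula}, and more precisely its intermediate formula \eqref{eq:pullback:mult:form:2}. Evaluating that equation at $t = 0$, where $\phi^0_{X_\nabla} = \mathrm{id}$ and $\exp_\nabla^0 = \gu\circ p$ collapses $A$ onto the unit section so that $(\exp_\nabla^0)^*\Omega = p^*\gu^*\Omega = 0$, one obtains
\[ \frac{\d}{\d t}\Big|_{t=0}(\exp_\nabla^t)^*\Omega = \omega \qquad \text{in } \Omega^k(A). \]
The left-hand side is the first-order jet of $\Omega$ along the unit section, transported to the zero section of $A$ by the exponential. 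Since $\exp_\nabla$ identifies a neighborhood of the zero section of $A$ with a neighborhood of $\gu(M)$ in $\G$, and under this identification the Lie algebroid $\oplus^k_A TA$ corresponds to the restriction of $T(\oplus^k_\G T\G)$ to the unit section, this first-order jet is morally the Lie algebroid $1$-cocycle $\LieA(\overline{\Omega})$.

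The main obstacle---and the bulk of the remaining work---is to make this last identification precise, i.e.\ to exhibit the $k$-form $\omega\in\Omega^k(A)$ as the map $\LieA(\overline{\Omega}):\oplus^k_A TA\to\R$ on the nose. I would carry this out by unwinding the tangent Lie algebroid structure of $TA\to TM$ via the linear sections $\d\al$ and core sections $\widehat{\al}$ recalled just above the proposition. On a $k$-tuple built from linear and core sections, $\LieA(\overline{\Omega})$ is computed by evaluating $\overline{\Omega}$ on the corresponding tuple of sections of $\oplus^k_\G T\G$ along the unit section of the tangent groupoid and differentiating. The characterization \eqref{eq:multipl:left:invariant:form} of $\sigma_{_\Omega}$, together with $\gu^*\Omega = 0$ and the analogous identity for $\d\Omega$, should then match the result with the evaluation of $\omega$: the term $\d\sigma_{_\Omega}^*\theta_{k-1}$ captures the contribution of the linear sections, while $\sigma_{_{\d\Omega}}^*\theta_k$ captures the contribution of the core ones, exactly as dictated by the definition of the tautological form $\theta_k$ (see Remark \ref{rem:tautological:form}). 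As a cleaner alternative, one can verify directly that $\omega$ satisfies the Lie algebroid $1$-cocycle condition on $\oplus^k_A TA$---equivalent to the IM-form compatibility conditions of \cite{BC12}---and then invoke uniqueness of the infinitesimal cocycle whose local integration via Theorem \ref{thm:local:formula} reproduces $\Omega$, forcing $\omega = \LieA(\overline{\Omega})$.
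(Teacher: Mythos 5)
Your proposal is correct in its essentials, and its rigorous core coincides with the paper's proof: the first equivalence is, as you say, an immediate unwinding of $\gm^*\Omega=\pr_1^*\Omega+\pr_2^*\Omega$ componentwise, and the identification of $\LieA(\overline{\Omega})$ with $\overline{\omega}$ is carried out in the paper exactly as in your "main obstacle" paragraph, namely by evaluating both cocycles on the generating sections $(\widehat{\al})^k_i$ and $(\d\al)^k$ of $\oplus^k_A TA\to\oplus^k_M TM$ and comparing. The detour through Theorem \ref{thm:local:formula} and its intermediate formula at $t=0$ is not used by the paper; it is a legitimate alternative motivation (and your evaluation $\frac{\d}{\d t}\big|_{t=0}(\exp_\nabla^t)^*\Omega=\omega$ is correct), but you rightly observe that the nontrivial point is precisely the identification of this $t$-derivative with $\LieA(\overline{\Omega})$ under $\LieA(\oplus^kT\G)=\oplus^kT\LieA(\G)$, and the way you propose to settle that is the section-by-section computation that constitutes the paper's entire proof. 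So the detour buys intuition but no economy. Your second "cleaner alternative" (verify the cocycle condition for $\overline{\omega}$ directly and invoke uniqueness of the infinitesimal cocycle integrating to $\Omega$) is workable but leans on material the paper only develops afterwards (the proposition following this one, and the integration formulas of Section \ref{sec:local:integration:forms}), so it would invert the logical order of the text.

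One concrete caution: your predicted bookkeeping of which term of $\omega$ is seen by which sections is backwards. In the actual computation the core sections $(\widehat{\al})^k_i$, being based at the zero section where the bundle map $\sigma_{_{\d\Omega}}$ vanishes, pick up only the term $\d\sigma_{_\Omega}^*\theta_{k-1}$, yielding $\LieA\overline{\Omega}((\widehat{\al})^k_i)=(-1)^{i+1}\pr_i^*(i_\al\Omega)=\overline{\omega}((\widehat{\al})^k_i)$; it is the linear sections $(\d\al)^k$ that see both terms, via $\al^*\sigma_{_\Omega}^*\theta_{k-1}=\sigma_{_\Omega}(\al)$ and $\al^*\sigma_{_{\d\Omega}}^*\theta_k=\sigma_{_{\d\Omega}}(\al)$, yielding $\d i_\al\Omega+i_\al\d\Omega=-(\d\sigma_{_\Omega}(\al)+\sigma_{_{\d\Omega}}(\al))$. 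This does not affect the validity of your strategy, but it signals that the "direct computation" still needs to be carried out rather than asserted.
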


\begin{proof}
The sections of the bundle $\oplus^k_A TA\to \oplus^k_M TM$ are generated by sections of the form:
\[ (\widehat{\al})^k_i:=\underbrace{0\oplus\cdots\oplus\widehat{\al}\oplus\cdots\oplus 0}_{\text{$k$ factors with $\widehat{\al}$ in $i$-entry}} \qquad
(\d\al)^k= \underbrace{\d\al\oplus\cdots\oplus\d\al}_k. \]
One then checks by direct computation using the definitions and \eqref{eq:multi:infinitesimal:multi:form} that:
\begin{align*}
\LieA\overline{\Omega}((\widehat{\al})^k_i)&=(-1)^{i+1}\pr_i^*(i_\al\Omega)=-(-1)^{i+1}\pr^*_i\sigma_{_\Omega}(\al)=\overline{\omega}((\widehat{\al})^k_i), \\
\LieA\overline{\Omega}((\d\al)^k)&=(\d i_\al\Omega+i_\al\d\Omega)=-(\d\sigma(\al)+\sigma_{_{\d\Omega}}(\al))=\overline{\omega}((\d\al)^k),
\end{align*}
where:
\[ \pr_i:\oplus^k_MTM\to \oplus^{k-1}_M TM, \quad (v_1,\dots,v_k)\mapsto (v_1,\dots,\widehat{v}_i,\dots,v_k). \]
\end{proof}

\begin{remark}
Not every k-form $\omega\in\Omega^k(A)$ induces a vector bundle map:
\[
\xymatrix{
\oplus^k_A TA\ar[d] \ar[r]^---{\overline{\omega}} & \R\ar[d] \\
\oplus^k_M TM\ar[r] & 0}
\]
When this is the case, one calls $\omega$ a \emph{linear $k$-form}. It is not hard to check that a form is linear if and only if:
\[ m_t^*\omega=t\, \omega, \quad \forall\, t>0, \]
where $m_t:A\to A$ is fiberwise multiplication by $t$. Moreover, linear $k$-forms are precisely those forms $\omega\in\Omega^k(A)$ that can be expressed as
\[ \omega=-\big(\d\sigma^*\theta_{k-1}+\nu^*\theta_k\big), \]
where $(\sigma,\nu):A\to \wedge^{k-1}T^*M\oplus \wedge^k T^*M$ are (unique) bundle maps. This can be seen, e.g., by working on a chart for $M$ over which the vector bundle $A\to M$ trivializes. We refer to  \cite{BC12} for more details on this correspondence.
\end{remark}

\subsection{Infinitesimal multiplicative forms}
Theorem \ref{thm:local:formula} and Proposition \ref{prop:multiplicative:form:bundle:map} show that for a multiplicative form $\Omega\in\Omega^k(\G)$ its local behavior around the identity section is controlled by the maps:
\begin{align*}
\sigma:=\sigma_{_\Omega}:A\to \wedge^{k-1}T^*M,\quad \al\mapsto -\gu^*(i_\al\Omega)\\
\nu:=\sigma_{_{\d\Omega}}:A\to \wedge^{k}T^*M,\quad \al\mapsto -\gu^*(i_\al\d\Omega)
\end{align*}
%
One can show that these maps are related to the Lie algebroid structure as follows:

\begin{proposition}[\cite{BC12}]
The pair $(\sigma,\nu):A\to \wedge^{k-1}T^*M\oplus \wedge^k T^*M$ satisfies:
\begin{align}
&i_{\rho(\be)}\sigma(\al)=-i_{\rho(\al)}\sigma(\be),\label{IM:0} \tag{IM0}\\
&\sigma([\al,\be])=\Lie_{\rho(\al)}\sigma(\be)-i_{\rho(\be)}\d\sigma(\al)-i_{\rho(\be)}\nu(\al), \label{IM:1}\tag{IM1}\\
&\nu([\al,\be])=\Lie_{\rho(\al)}\nu(\be)-i_{\rho(\be)}\d\nu(\al).\label{IM:2}\tag{IM2}
\end{align}
\end{proposition}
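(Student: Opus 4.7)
The plan is to derive all three identities (IM0), (IM1), (IM2) from a single formula applied to both $\Omega$ and $d\Omega$, combined with Cartan's magic formula. The starting observation is that when $\Omega$ is multiplicative, so is $d\Omega$ (since $d$ commutes with pullbacks), so formula \eqref{eq:multipl:left:invariant:form} gives two parallel identities:
\[
i_{\overleftarrow{\al}}\Omega=-\s^*\sigma(\al),\qquad i_{\overleftarrow{\al}}\d\Omega=-\s^*\nu(\al),
\]
for every $\al\in\Gamma(A)$. Also, because $\overleftarrow{\al}$ is tangent to the $\t$-fibers and satisfies $\d\s(\overleftarrow{\al})=\rho(\al)\circ\s$, one has $i_{\overleftarrow{\be}}\s^*\eta=\s^*(i_{\rho(\be)}\eta)$ for any form $\eta\in\Omega^\bullet(M)$.

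\emph{Step 1 — (IM0).} Contract the first formula with a second left-invariant field $\overleftarrow{\be}$:
\[
i_{\overleftarrow{\be}}i_{\overleftarrow{\al}}\Omega=-\s^*\bigl(i_{\rho(\be)}\sigma(\al)\bigr).
\]
Swapping $\al$ and $\be$ and using $i_{\overleftarrow{\be}}i_{\overleftarrow{\al}}\Omega=-i_{\overleftarrow{\al}}i_{\overleftarrow{\be}}\Omega$ gives $\s^*\bigl(i_{\rho(\be)}\sigma(\al)+i_{\rho(\al)}\sigma(\be)\bigr)=0$; since $\s$ is a submersion, $\s^*$ is injective on forms, so (IM0) follows.

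\emph{Step 2 — (IM1).} Compute $\Lie_{\overleftarrow{\al}}(i_{\overleftarrow{\be}}\Omega)$ in two ways. Using the formula for $i_{\overleftarrow{\be}}\Omega$ and the fact that Lie derivatives commute with pullbacks along $\s$, the left-hand side equals $-\s^*\Lie_{\rho(\al)}\sigma(\be)$. On the other hand,
\[
\Lie_{\overleftarrow{\al}}(i_{\overleftarrow{\be}}\Omega)=i_{[\overleftarrow{\al},\overleftarrow{\be}]}\Omega+i_{\overleftarrow{\be}}\bigl(\d i_{\overleftarrow{\al}}\Omega+i_{\overleftarrow{\al}}\d\Omega\bigr).
\]
Using $[\overleftarrow{\al},\overleftarrow{\be}]=\overleftarrow{[\al,\be]_A}$, the two displayed formulas above, and the commutation rule for $i_{\overleftarrow{\be}}$ with $\s^*$, this simplifies to
\[
-\s^*\sigma([\al,\be]_A)-\s^*\bigl(i_{\rho(\be)}\d\sigma(\al)\bigr)-\s^*\bigl(i_{\rho(\be)}\nu(\al)\bigr).
\]
Equating and cancelling $\s^*$ yields (IM1).

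\emph{Step 3 — (IM2).} Repeat Step 2 verbatim with $d\Omega$ in place of $\Omega$: the Cartan expansion of $\Lie_{\overleftarrow{\al}}\d\Omega$ contributes only $\d i_{\overleftarrow{\al}}\d\Omega=-\d\s^*\nu(\al)$ since $d^2=0$. The resulting identity, after cancelling $\s^*$, is exactly (IM2).

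\emph{Where the work is.} There is no hard step: the main things to justify carefully are (i) that $d\Omega$ is multiplicative, so the parallel formula for $\nu$ holds, (ii) the bracket identity $[\overleftarrow{\al},\overleftarrow{\be}]=\overleftarrow{[\al,\be]_A}$ in the convention $A=\ker\d\t$ of this paper, and (iii) the injectivity of $\s^*$ used at each elimination step. The conceptual point is that (IM1) and (IM2) are the same identity applied to two multiplicative forms that happen to be related by $\d$, which is precisely why the tautological-form expression \eqref{eq:multi:infinitesimal:multi:form} packages $\sigma$ and $\nu$ together as the data of an IM-form.
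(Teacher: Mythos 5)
Your proof is correct and follows essentially the same route as the paper: both arguments rest on the identities $i_{\overleftarrow{\al}}\Omega=-\s^*\sigma(\al)$ and $i_{\overleftarrow{\al}}\d\Omega=-\s^*\nu(\al)$, the $\s$-relatedness of $\overleftarrow{\al}$ and $\rho(\al)$, the bracket relation for left-invariant vector fields, Cartan's magic formula together with $[\Lie_X,i_Y]=i_{[X,Y]}$, and the injectivity of $\s^*$. The paper merely packages these commutation relations into the single identity \eqref{eq:general:id} before substituting, so the computation is the same one rearranged.
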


\begin{proof}
Relation \eqref{IM:0} follows immediately from the definitions. To prove \eqref{IM:1}, we use the identity:
\begin{equation}
\label{eq:general:id}
i_X i_Y\d=i_{[X,Y]}+\Lie_Y i_X -\Lie_X i_Y+\d i_X i_Y,
\end{equation}
to obtain:
\begin{align*}
\s^*(i_{\rho(\be)}\nu(\al))&=-i_{\overleftarrow{\al}}i_{\overleftarrow{\be}}\d\Omega=-\big(i_{[{\overleftarrow{\al}},{\overleftarrow{\be}}]}\Omega+\Lie_{\overleftarrow{\be}} i_{\overleftarrow{\al}} \Omega-\Lie_{\overleftarrow{\al}} i_{\overleftarrow{\be}}\Omega+\d i_{\overleftarrow{\al}} i_{\overleftarrow{\be}}\Omega\big),\\
&=\s^*\sigma([\al,\be])+\Lie_{\overleftarrow{\be}}( \s^*\sigma(\al))-\Lie_{\overleftarrow{\al}}( \s^*\sigma(\be))+\d i_{\overleftarrow{\al}} \s^*\sigma(\be)\\
&=\s^*\big(\sigma([\al,\be])-\Lie_{\rho(\al)}(\sigma(\be))+\Lie_{\rho(\be)}(\sigma(\al))+\d (\sigma(\be)(\rho(\al)))\big)\\
&=\s^*\big(\sigma([\al,\be])-\Lie_{\rho(\al)}(\sigma(\be))-i_{\rho(\be)}\d\sigma(\al)\big),
\end{align*}
where we used \eqref{eq:multipl:left:invariant:form} to pass from the first to second line, that $\overleftarrow{\al}$ is $\s$-related to $\rho(\al)$ to pass from the second to third line, and \eqref{IM:0} and Cartan's magic formula, to pass to the last line.

An entirely similar computation applied to $\d\Omega$ shows that \eqref{IM:1} also holds:
\begin{align*}
0&=-i_{\overleftarrow{\al}}i_{\overleftarrow{\be}}\d^2\Omega=-\big(i_{[{\overleftarrow{\al}},{\overleftarrow{\be}}]}\d\Omega+\Lie_{\overleftarrow{\be}} i_{\overleftarrow{\al}} \d\Omega-\Lie_{\overleftarrow{\al}} i_{\overleftarrow{\be}}\d\Omega+\d i_{\overleftarrow{\al}} i_{\overleftarrow{\be}}\d\Omega\big),\\
&=\s^*\nu([\al,\be])+\Lie_{\overleftarrow{\be}}( \s^*\nu(\al))-\Lie_{\overleftarrow{\al}}( \s^*\nu(\be))+\d i_{\overleftarrow{\al}} \s^*\nu(\be)\\
&=\s^*\big(\nu([\al,\be])-\Lie_{\rho(\al)}(\nu(\be))+\Lie_{\rho(\be)}(\nu(\al))+\d (\nu(\be)(\rho(\al)))\big)\\
&=\s^*\big(\nu([\al,\be])-\Lie_{\rho(\al)}(\nu(\be))-i_{\rho(\be)}\d\nu(\al)\big),
\end{align*}
\end{proof}

Equations \eqref{IM:0}-\eqref{IM:2} may not seem very enlightening. The reason is that, as shown by Proposition \ref{prop:multiplicative:form:bundle:map}, the more natural infinitesimal object associated with a multiplicative form $\Omega$ is the Lie algebroid 1-cocycle $\LieA(\overline{\Omega}):\oplus^k_A TA\to \R$, while those equations are formulated in terms of the maps $\sigma=\sigma_\Omega$ and $\nu=\sigma_{_{\d\Omega}}$. In fact, one has:

\begin{proposition}[\cite{BC12}]
A pair of bundle maps $(\sigma,\nu):A\to \wedge^{k-1}T^*M\oplus \wedge^k T^*M$ satisfies \eqref{IM:0}-\eqref{IM:2} if and only if the linear $k$-form $\omega\in\Omega^k(A)$ given by:
\[ \omega:=-\big(\d\sigma^*\theta_{k-1}+\nu^*\theta_k\big)\in\Omega^k(A). \]
defines a Lie algebroid 1-cocycle $\overline{\omega}:\oplus^k_A TA\to \R$. 
\end{proposition}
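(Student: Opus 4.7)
The plan is to check the Lie algebroid 1-cocycle condition for $\overline{\omega}$ directly, by evaluating it on pairs of generating sections of $\oplus^k_A TA\to \oplus^k_M TM$ and matching the three resulting identities with \eqref{IM:0}, \eqref{IM:1} and \eqref{IM:2}. Sections of $\oplus^k_A TA$ are generated over $C^\infty(\oplus^k_M TM)$ by the linear sections $(\d\alpha)^k$ and the core sections $(\widehat{\alpha})^k_i$, whose brackets are computed componentwise from the tangent algebroid relations:
\[ [(\d\alpha)^k,(\d\beta)^k]=(\d[\alpha,\beta])^k,\quad [(\d\alpha)^k,(\widehat{\beta})^k_i]=(\widehat{[\alpha,\beta]})^k_i,\quad [(\widehat{\alpha})^k_i,(\widehat{\beta})^k_j]=0. \]
From the computation already carried out in the proof of Proposition~\ref{prop:multiplicative:form:bundle:map}, together with the defining formula for $\omega$ and the tautological-form identity of Remark~\ref{rem:tautological:form}, the values of $\overline{\omega}$ on these generators are
\[ \overline{\omega}((\widehat{\alpha})^k_i)=-(-1)^{i+1}\pr_i^*\sigma(\alpha),\qquad \overline{\omega}((\d\alpha)^k)=-(\d\sigma(\alpha)+\nu(\alpha)). \]

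With these in hand, I would compute
\[ \rho(X)\overline{\omega}(Y)-\rho(Y)\overline{\omega}(X)-\overline{\omega}([X,Y]) \]
on each of the three pairs of generating sections, using that the anchor $\d\rho$ sends a linear section $\d\alpha$ to the complete lift of $\rho(\alpha)$ on $\oplus^k_M TM$ (which acts on the pullback of a form from $M$ as the Lie derivative $\Lie_{\rho(\alpha)}$) and sends a core section $\widehat{\alpha}$ to the vertical lift of $\rho(\alpha)$ in the corresponding slot (which acts by insertion of $\rho(\alpha)$ in that entry). For two core sections the bracket vanishes and the two vertical lifts produce, modulo signs, precisely the symmetry \eqref{IM:0}. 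For a linear and a core section the bracket is $(\widehat{[\alpha,\beta]})^k_i$, and unwinding directly yields \eqref{IM:1}. For two linear sections the bracket is $(\d[\alpha,\beta])^k$ and the cocycle condition gives an antisymmetric combination in $\alpha,\beta$; after invoking \eqref{IM:1} from the previous case and Cartan's magic formula $\Lie_X=\d i_X+i_X\d$, one recovers \eqref{IM:2}.

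Since linear and core sections span $\Gamma(\oplus^k_A TA)$ over $C^\infty(\oplus^k_M TM)$ and the algebroid differential satisfies the standard Leibniz rule in each argument, verification on generators implies the cocycle property on arbitrary sections, and both directions of the equivalence are obtained at once. The main technical hurdle lies in the bookkeeping: keeping track of the $(-1)^{i+1}$ signs from the position of the core slot, and carefully identifying how the vertical and complete lifts act on the functions $\pr_i^*\sigma(\cdot)$ and $\d\sigma(\cdot)+\nu(\cdot)$ on $\oplus^k_M TM$. Once the conventions are fixed, the three \eqref{IM:0}--\eqref{IM:2} identities fall out cleanly, one from each bracket case.
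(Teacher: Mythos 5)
Your proposal is correct and follows essentially the same route as the paper: both evaluate the cocycle expression $c(X,Y)=\Lie_{\rho(X)}\overline{\omega}(Y)-\Lie_{\rho(Y)}\overline{\omega}(X)-\overline{\omega}([X,Y])$ on the generating pairs of linear sections $(\d\al)^k$ and core sections $(\widehat{\al})^k_i$, using the values of $\overline{\omega}$ on generators from Proposition~\ref{prop:multiplicative:form:bundle:map} and the componentwise tangent-algebroid brackets, and identify the three resulting expressions with \eqref{IM:0}, \eqref{IM:1} and \eqref{IM:2}. Your added remarks on the tensoriality of $c$ and on how the anchor acts via complete and vertical lifts are accurate fillings-in of steps the paper leaves implicit.
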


\begin{proof}
For the proof one needs to check that conditions \eqref{IM:0}-\eqref{IM:2} are equivalent to the Lie algebroid 1-cocycle condition:
\[ c(X,Y):=\Lie_{\rho(X)}(\overline{\omega}(Y))-\Lie_{\rho(Y)}(\overline{\omega}(X))-\overline{\omega}([X,Y])=0, \]
where $X,Y$ are any sections of the Lie algebroid $\oplus^k_A TA\to\oplus^k_M TM$. As in the proof of Proposition \ref{prop:multiplicative:form:bundle:map} we use the fact that the sections of this bundle are generated by sections of the form $(\d\al)^k$ and $(\widehat{\al})^k_i$, $(i=1,\dots,k)$. Then one finds by a direct computation that:
\begin{align*}
c\big((\widehat{\al})^k_i,(\widehat{\be})^k_j\big)&=
\left\{
\begin{array}{ll}
\pm\pr^*_{i,j}\big(i_{\rho(\be)}\sigma(\al)+i_{\rho(\al)}\sigma(\be)\big)  & \text{ if }i\not=j   \\
\\
0  & \text{ if }i= j 
\end{array}
\right.\\
c\big((\d\al)^k,(\widehat{\be})^k_i\big)&=\pm\pr_i^*\big(\sigma([\al,\be])-\Lie_{\rho(\al)}\sigma(\be)+i_{\rho(\be)}\d\sigma(\al)+i_{\rho(\be)}\nu(\al)\big)\\
c\big((\d\al)^k,(\d\be)^k\big)&=\pm\big(\nu([\al,\be])-\Lie_{\rho(\al)}\nu(\be)+i_{\rho(\be)}\d\nu(\al)\big)
\end{align*}
where:
\begin{align*}
\pr_i:\oplus^k_MTM\to \oplus^{k-1}_M TM, \quad &(v_1,\dots,v_k)\mapsto (v_1,\dots,\widehat{v}_i,\dots,v_k),\\
\pr_{i,j}:\oplus^k_MTM\to \oplus^{k-2}_M TM, \quad &(v_1,\dots,v_k)\mapsto (v_1,\dots,\widehat{v}_i,\dots,\widehat{v}_j,\dots,v_k).
\end{align*}
Hence,  $\overline{\omega}:\oplus^k_A TA\to \R$ is a Lie algebroid 1-cocycle iff \eqref{IM:0}-\eqref{IM:2} hold.
\end{proof}

This leads to the following definition:

\begin{definition}
An {\bf infinitesimal multiplicative $k$-form}, or simply an {\bf IM $k$-form}, on a Lie algebroid $A\to M$ is a pair of bundle maps 
\[ (\sigma,\nu):A\to \wedge^{k-1}T^*M\oplus \wedge^k T^*M \]
satisfying \eqref{IM:0}-\eqref{IM:2}. An IM form is {\bf closed} if $\nu=0$.
\end{definition}

Applying Lie's 2nd Theorem, one concludes that:

\begin{theorem}[\cite{BC12}]
\label{thm:linear:form:IM:forms}
Let $\G\tto M$ be a Lie groupoid with Lie algebroid $(A,[\cdot,\cdot],\rho)$ and assume that the $\t$-fibers are 1-connected. There are 1-to-1 correspondences:
 \[
\xymatrix@R=15pt@C=1pt{
& \left\{\txt{multiplicative forms\\ $\Omega\in\Omega^k(\G)$\\ \,}\right\} \ar[ld]\ar[rd]& \\
\left\{\txt{linear forms $\omega\in\Omega^k(A)$ s.t.\\ 
$\overline{\omega}:\oplus_A^k TA\to\R$\\is a Lie algebroid 1-cocycle\\ 
 \,} \right\}\ar[rr]\ar[ru] & &
\left\{\txt{IM-forms $(\sigma,\nu)$\\ $\sigma:A\to \wedge^{k-1}T^*M$\\ $\nu:A\to \wedge^k T^*M$\\ \,} \right\}\ar[lu]\ar[ll]
}
\]
\end{theorem}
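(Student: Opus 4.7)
The two propositions immediately preceding the theorem already give a bijection between IM-forms $(\sigma,\nu):A\to\wedge^{k-1}T^*M\oplus\wedge^kT^*M$ and linear $k$-forms $\omega\in\Omega^k(A)$ whose associated fiberwise linear map $\overline{\omega}:\oplus^k_A TA\to\R$ is a Lie algebroid $1$-cocycle, via $\omega=-(\d\sigma^*\theta_{k-1}+\nu^*\theta_k)$. Moreover, Proposition \ref{prop:multiplicative:form:bundle:map} assigns to any multiplicative form $\Omega\in\Omega^k(\G)$ a groupoid $1$-cocycle $\overline{\Omega}:\oplus^k_\G T\G\to\R$ whose Lie algebroid derivative is precisely $\overline{\omega}$ for the pair $(\sigma_\Omega,\sigma_{\d\Omega})$. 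The theorem therefore reduces to the existence and uniqueness of a multiplicative $\Omega$ integrating a given linear cocycle $\overline{\omega}$.

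The plan is to apply Lie's 2nd Theorem to the Lie algebroid morphism $\overline{\omega}:\oplus^k_A TA\to\R$, with target the abelian group $(\R,+)$ viewed as a Lie groupoid over a point. As noted right before the proposition, the Lie functor commutes with iterated tangent and direct sum constructions, so $\LieA(\oplus^k_\G T\G)=\oplus^k_A TA$. Moreover, each $\t$-fiber of $\oplus^k_\G T\G\to\oplus^k_MTM$ is, over $(v_1,\dots,v_k)\in \oplus^k_xTM$, a direct sum of tangent spaces to the $\t$-fiber $\t^{-1}(x)\subset\G$ along a lift, hence it is $1$-connected whenever the $\t$-fibers of $\G$ are. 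Lie's 2nd Theorem then produces a unique groupoid morphism $\overline{\Omega}:\oplus^k_\G T\G\to\R$, i.e.\ a unique groupoid $1$-cocycle, integrating $\overline{\omega}$.

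The main obstacle, and where I would spend most effort, is to show that $\overline{\Omega}$ is the function associated to a genuine differential $k$-form $\Omega\in\Omega^k(\G)$, namely that $\overline{\Omega}|_g$ is $\R$-multilinear and alternating on $T_g\G\oplus\cdots\oplus T_g\G$ for every $g$. Both properties are inherited from the linear form $\omega$ by the uniqueness clause of Lie's 2nd Theorem. For multilinearity in the $i$-th slot, the fiberwise scalar multiplication $m_t^{(i)}$ on the $i$-th summand is an automorphism of the groupoid $\oplus^k_\G T\G$ integrating the corresponding automorphism of $\oplus^k_A TA$; the linearity of $\omega$ (equivalently $m_t^*\omega=t\omega$ for each slot, as in the remark after Proposition \ref{prop:multiplicative:form:bundle:map}) yields the same relation for $\overline{\omega}$, and uniqueness of integration forces it to hold for $\overline{\Omega}$. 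The alternating property is handled identically using the $S_k$-action permuting the summands of $\oplus^k_\G T\G$, which is again a groupoid automorphism integrating the analogous permutation on $\oplus^k_A TA$; since $\overline{\omega}$ is alternating, so is $\overline{\Omega}$ by uniqueness.

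Once $\overline{\Omega}=\overline{\Omega}$ for some $\Omega\in\Omega^k(\G)$, the cocycle condition on $\overline{\Omega}$ is literally the multiplicativity identity $\gm^*\Omega=\pr_1^*\Omega+\pr_2^*\Omega$, by the criterion in Proposition \ref{prop:multiplicative:form:bundle:map}. Uniqueness of $\Omega$ is immediate from uniqueness of $\overline{\Omega}$ in Lie's 2nd; and the composition in the direction $\Omega\mapsto(\sigma_\Omega,\sigma_{\d\Omega})\mapsto\omega\mapsto\overline{\omega}\mapsto\overline{\Omega}'$ recovers the original $\Omega$ again by uniqueness, since both $\overline{\Omega}$ and $\overline{\Omega}'$ integrate $\overline{\omega}$. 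Theorem \ref{thm:local:formula} then makes the correspondence explicit on a tubular neighborhood of the identity section, confirming compatibility with the local picture developed earlier.
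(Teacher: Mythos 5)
Your plan is correct and follows essentially the same route as the paper: the two preceding propositions give the bijection between IM-forms and linear algebroid $1$-cocycles, and the paper likewise integrates $\overline{\omega}$ to a groupoid $1$-cocycle on $\oplus^k_\G T\G$ (whose $\t$-fibers are $1$-connected when those of $\G$ are) via Lie's Second Theorem, deducing skew-symmetry and multilinearity from the uniqueness clause applied to the permutation and scaling morphisms. The only detail you elide is additivity in each slot, for which the paper also invokes the addition morphism $\oplus^{k+1}_\G T\G\to\oplus^k_\G T\G$, $(v_1,v_1',v_2,\dots,v_k)\mapsto(v_1+v_1',v_2,\dots,v_k)$; your homogeneity argument alone suffices only after noting that a smooth function vanishing at the origin and homogeneous of degree one in a slot is automatically linear in that slot.
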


\begin{remark}
Notice that under the correspondences of the theorem \emph{closed} multiplicative forms $\Omega\in\Omega^k(\G)$ correspond to:
\begin{enumerate}
\item[-] \emph{exact linear forms} $\omega=\d\al\in\Omega^k(A)$, i.e., linear multiplicative forms which are exact with a primitive $\al\in\Omega^{k-1}(A)$ which is also a linear form;
\item[-] \emph{closed IM-forms} $\sigma:A\to \wedge^{k-1}T^*M$, i.e., IM-forms $(\sigma,\nu)$ with $\nu\equiv 0$.
\end{enumerate}
When $k=2$, one also checks that \emph{non-degenerate} multiplicative 2-forms $\Omega\in\Omega^2(\G)$ correspond to non-degenerate linear 2-forms $\omega\in\Omega^2(A)$ and to IM 2-forms $(\sigma,\nu)$ for which $\sigma:A\to T^*M$ is an isomorphism. The case $k=2$ was first discussed in \cite{BCWZ04} and developed further in \cite{BCO09}.
\end{remark}

\section{From Lie algebroids to local Lie groupoids}
\label{sec:local:integration:groupoids}

\subsection{Local Lie groupoids}
\label{sec:local:groupoid}
There are different possible choices of axioms for a local Lie groupoid and these choices have important consequences, as we will see shortly. We will use as main reference for local Lie groupoids \cite{FM20}. 
In this paper we adopt the following definition, which in the language of \cite{FM20} corresponds to a 3-associative local Lie groupoid. 

\begin{definition}
    A \textbf{local Lie groupoid} $G$ over a manifold $M$ is a manifold $G$,
    together with maps:
    \begin{enumerate}
        \item[-] {\bf source/target:} $\s, \t : G \to M$ submersions;
        \item[-] {\bf units:} $u : M \to G$ a smooth map;
        \item[-] {\bf multiplication:} $m : \U \to G$ a submersion, where $\U\subset  G\timesst G$ is an open neighborhood of:
            \[ (G\timesst M) \cup (M\timesst G)
                = \bigcup_{g\in G} \{ (g,u(s(g))), (u(t(g)), g) \} ;\]
        \item[-] {\bf inversion} $i : \V \to \V$ a smooth map, where $\V \subset  G$ is an open neighborhood of $u(M)$
            such that $\V\timesst \V \subset \U$;
    \end{enumerate}
    such that the following axioms hold:
    \begin{enumerate}[({A}1)]
        \item $\s(m(g,h)) = \s(h)$ and $\t(m(g,h)) = \t(g)$ for all $(g,h) \in \U$;
        \item $m(m(g,h),k) = m(g,m(h,k))$, if $(g,h),(h,k),(m(g,h),k),(g,m(h,k))\in \U$;
        \item $m(g,u(\s(g))) = m(u(\t(g)),g) = g$ for all $g\in G$;
        \item $\s(i(g)) = \t(g)$ and $\t(i(g)) = s(g)$ for all $g\in\V$;
        \item $m(i(g),g) = u(\s(g))$ and $m(g,i(g)) = u(\t(g))$ for all $g\in \V$.
    \end{enumerate}\label{def:llg}
\end{definition}

For the multiplication we usually write $gh$ instead of $m(g,h)$ and the definition requires that it is defined provided one of the arrows is small enough. Similarly, for the inversion we shall write $g^{-1}$ instead of $i(g)$, and it is defined provided $g$ is small enough. A local Lie groupoid with space of objects $M=\{*\}$ will be called a {\bf local Lie group}. Just as in the case of Lie groupoids, the space of arrows $G$ need not be Hausdorff. However, all other manifolds, including $M$, the source and target fibers, are assumed to be Hausdorff.  Occasionally, we will deal with {\bf local topological groupoids}: the definition is analogous but one works in the topological category instead. 

There are two different ways of obtaining a smaller local Lie groupoid $G'$ from a given local Lie groupoid $G$, and  both of them are relevant to us:
\begin{itemize}
    \item {\bf Restriction:} we say that $G'$ is obtained by \emph{restricting} $G$, if both local
        groupoids have the same manifolds of arrows and objects, the same
        source and target maps, and the multiplication and inversion in $G'$
        are obtained by restricting the ones of $G$ to smaller domains;
    \item {\bf Shrink:} we say that $G'$ is obtained by \emph{shrinking} $G$, if $G'$ is an
        open neighborhood of $M$ in $G$, the source and target maps are the
        restrictions of $s$ and $t$ to $G'$, multiplication is the restriction
        of $m$ to $\U'=\U \cap (G' \timesst G')\cap m^{-1}(G')$, and
        inversion is the restriction of $i$ to $\V'=(\V \cap G') \cap i(\V \cap
        G')$.
\end{itemize}

Morphisms between local Lie groupoids are defined in a more or less obvious way (see \cite{FM20}). If $G'$ is obtained from $G$ by either restricting or shrinking, the inclusion $G' \to G$ is a morphism of local Lie groupoids. 

\begin{example}[Neighborhoods of the identity of a Lie groupoid]
\label{ex:restriction}
Let $\G\tto M$ be a Lie groupoid. Any open $\U\subset \G^{(2)}$ containing
$(\G\timesst M) \cup (M\timesst \G)$ determines a restriction $G$ of $\G$. On
the other hand, any open neighborhood $G'\subset \G$ of the identity manifold
$M$ determines a local Lie groupoid $G'$ shrinking $\G$. 
\end{example}

\begin{example}[Coverings \cite{FM20,Olver:Lie}]
\label{ex:cover:groupoid}
One way of producing local Lie groupoids (even local Lie groups) which are not restrictions or shrinkings of Lie groupoids is to pass to covering spaces. For example, starting with the abelian Lie group $G=\R^2$ and removing a point distinct from the identity, $G'=\R^2-(1,0)$ one obtains a local Lie group. The universal covering space $\widetilde{G'}$ has a unique local Lie group structure making the projection $\pi:\widetilde{G'}\to G$ a morphism of local Lie groups. We will see later why the resulting local Lie group is not the restriction or shrinking of any Lie group.
\end{example}

Many constructions for Lie groupoids extend to local Lie groupoids, under appropriate assumptions. This is discussed in detail in \cite{FM20}. Here we will assume that a local Lie groupoid $G\tto M$, with domains $\U$ and $\V$ for the multiplication and inversion maps, has the following properties
\begin{enumerate}[(a)]
\item $M$ is connected;
\item $G$ is $\s$-connected and $\t$-connected;
\item For all $x\in M$, the set $\{ (g,h) \in \U \mid \s(g) = \t(h) = x \}$ is connected;
\item For every $(g,h) \in \U$, there is a path $\gamma:I\to G$ from $\t(h)$ to $g$
    such that $(\gamma(t),h) \in \U$ for all $t\in I$,
    or there is a path $\gamma:I\to G$ from $\s(g)$ to $h$ such that
    $(g,\gamma(t)) \in \U$ for all $t\in I$.
\item The left/right multiplications induce isomorphisms: 
\[ \d_h L_g:T^\t_{h} G\to T^\t_{gh}{G},\qquad \d_g R_h:T^\s_{g} G\to T^\s_{gh}{G}, \]
whenever $(g,h)\in \U$.
\end{enumerate}
These assumptions are not too strong: one can show that any Lie groupoid has a shrinking satisfying all these properties. 

Using property (d), one can define the Lie algebroid of a local Lie groupoid in the usual manner. Moreover:

\begin{proposition}
\label{prop:generate:inverses}
Let $G\tto M$ be a local Lie groupoid satisfying properties (a)-(d). Then any open $M\subset U\subset G$ generates $G$, i.e., any $g\in G$ can be factored as
\[ g=h_1(h_2(\cdots (h_{n-1}h_n))), \]
where $h_i\in U$. In particular, the $h_i$ can be chosen to be invertible.
\end{proposition}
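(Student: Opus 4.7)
The plan is to express $g$ as a telescoping product obtained by walking along an $\s$-fiber path from the identity to $g$: property (e) should allow each step of the walk to be expressed as right-multiplication by a small factor in $U$, and a compactness argument will bound the number of steps. The invertibility claim will then be essentially free, by shrinking $U$ to $U\cap \V$ at the outset.

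Concretely, I would first fix $g\in G$ and, using the $\s$-connectedness hypothesis (b), choose a continuous path $\delta:[0,1]\to \s^{-1}(\s(g))$ with $\delta(0)=\gu(\s(g))$ and $\delta(1)=g$. For each $s\in[0,1]$, property (e) combined with the inverse function theorem would produce an open neighborhood $\W_s\subset U\cap \s^{-1}(\t(\delta(s)))$ of $\gu(\t(\delta(s)))$ that is mapped diffeomorphically by $R_{\delta(s)}$ onto an open neighborhood of $\delta(s)$ in $\s^{-1}(\s(g))$; the containment $\W_s\subset U$ is arranged using $M\subset U$ and openness. The preimages $\delta^{-1}\bigl(R_{\delta(s)}(\W_s)\bigr)$ cover $[0,1]$, so a Lebesgue-number argument yields a partition $0=s_0<s_1<\cdots<s_n=1$ together with unique elements $v_i:=R_{\delta(s_{i-1})}^{-1}(\delta(s_i))\in U$ satisfying $v_i\cdot\delta(s_{i-1})=\delta(s_i)$.

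A short induction starting from $\delta(s_1)=v_1\cdot\gu(\s(g))=v_1$ would then show that
\[
\delta(s_i)=v_i\bigl(v_{i-1}\bigl(\cdots (v_2 v_1)\cdots\bigr)\bigr),
\]
each parenthesized product being legal because every intermediate product literally equals some $\delta(s_j)$ by construction, so no reassociation is ever needed. Taking $i=n$ and relabelling $u_i:=v_{n+1-i}$ delivers the desired factorization $g=u_1(u_2(\cdots(u_{n-1}u_n)))$. For invertibility I would simply run the whole argument with $U$ replaced by $U\cap\V$, still an open neighborhood of $M$; every $u_i$ then lies in $\V$, where $\ginv$ is defined. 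The step I expect to be most delicate is the Lebesgue-number argument producing the partition: one has to arrange that $\delta(s_i)$ lies in $R_{\delta(s_{i-1})}(\W_{s_{i-1}})$ for each $i$ and not just in some $R_{\delta(t)}(\W_t)$ for a nearby $t$; this can be handled by building the partition recursively from $s_0=0$, using continuity of $\delta$ and compactness of $[0,1]$ to keep the step size bounded below away from zero.
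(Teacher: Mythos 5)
The paper states this proposition without proof (it is imported from \cite{FM20}), so there is no in-text argument to compare against; judged on its own, your strategy is the standard one and is sound in outline: walk along a path in an $\s$-fiber from the unit to $g$, use property (e) plus the inverse function theorem to realize each step as left multiplication by an element of $U$ near a unit, and observe that the resulting telescoping product $v_i(v_{i-1}(\cdots(v_2v_1)))$ never requires reassociation because each inner expression literally equals some $\delta(s_j)$. Your fiber bookkeeping is correct for the paper's conventions ($gh$ defined when $\s(g)=\t(h)$, so $R_{\delta(s)}$ carries a neighborhood of $\gu(\t(\delta(s)))$ in $\s^{-1}(\t(\delta(s)))$ onto a neighborhood of $\delta(s)$ in $\s^{-1}(\s(g))$), and the reduction of the invertibility claim to replacing $U$ by $U\cap\V$ is fine. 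Note that you invoke property (e), which is indeed indispensable here even though the statement lists only (a)--(d); it is part of the paper's standing assumptions.

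The one genuine gap is exactly the step you flag, and the fix you propose does not work as stated. A Lebesgue number for the cover $\{\delta^{-1}(R_{\delta(s)}(\W_s))\}_s$ only guarantees that each short subinterval lies in some $V_t$ anchored at an \emph{interior} point $t$, not at its left endpoint; and in the greedy recursion from $s_0=0$ the admissible step length at $s_{i-1}$ depends on the choice of $\W_{s_{i-1}}$, so continuity of $\delta$ and compactness of $[0,1]$ alone do not bound it below --- the recursion can a priori stall at some $s^*<1$. Two standard repairs: (i) upgrade the pointwise inverse function theorem to a parametrized one along the compact path, producing radii independent of $s$ with $B(\delta(s),r)\cap\s^{-1}(\s(g))\subset R_{\delta(s)}(\W_s)$ for all $s$, after which uniform continuity of $\delta$ does yield a mesh that works with left-endpoint anchors; or (ii) drop the partition and show that $T=\{t:\delta(t)\ \text{admits such a factorization}\}$ is open and closed in $[0,1]$. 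Openness is exactly your local diffeomorphism observation; closedness is the missing ingredient: for $t\in\overline{T}$ one picks $t'\in T$ with $\delta(t')=v\,\delta(t)$, $v\in\W_t$, and writes $\delta(t)=v^{-1}(v\,\delta(t))=(v^{-1}v)\delta(t)$, which uses the invertibility of small elements (which you have arranged) together with a further shrinking of $\W_t$ so that the pairs $(v^{-1},v\,\delta(t))$ and $(v,\delta(t))$ are composable and axiom (A2) applies, and so that $\W_t^{-1}\subset U$. Some such uniformity or closedness argument must be supplied to make the factorization finite.
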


\subsection{Local integration of Lie algebroids}
\label{sec:local:integration:grpd}

\begin{theorem}[\cite{CrainicFernandes:integrability}]
Every Lie algebroid integrates to a \emph{local} Lie groupoid. 
\end{theorem}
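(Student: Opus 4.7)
The plan is to construct the local Lie groupoid by the \emph{spray groupoid} construction of \cite{CMS20b} whose ingredients are already laid out in Section \ref{subsection:groupoid:exponential}. The crucial observation is that in that section every structure map and the multiplication \eqref{eq:multiplication:exponential} has been expressed intrinsically in terms of algebroid data alone (a torsion-free $A$-connection $\nabla$, its spray $X_\nabla$, the Maurer-Cartan form $\theta^A$ on $A$, and the resulting left-invariant vector fields). Hence the task is to show that these data actually assemble into a local Lie groupoid without presupposing an ambient $\G$.

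Concretely, I would fix a torsion-free $A$-connection $\nabla$, which exists on any Lie algebroid, together with its geodesic spray $X_\nabla$ satisfying (S1)--(S2). On a sufficiently small open neighborhood $V\subset A$ of the zero section, the time-$1$ spray flow $\phi^1_{X_\nabla}$ is defined, and one sets
\[ \t:=p,\qquad \s:=p\circ \phi^1_{X_\nabla},\qquad \gu(x):=0_x,\qquad \ginv(a):=-\phi^1_{X_\nabla}(a), \]
with multiplication $\gm$ on an open set $\U\subset V\timesst V$ given by \eqref{eq:multiplication:exponential}. The Maurer-Cartan form $\theta^A$ needed to define the left-invariant vector fields $\overleftarrow{\al}$ is defined by the right-hand side of the Yudilevich formula of Theorem \ref{thm:MC:exponential}, which is purely algebroid-theoretic. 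Axioms (A1) and (A3) then hold essentially by construction, while (A4) and (A5) follow from the homogeneity (GS2) of the spray flow, which yields $\phi^1_{X_\nabla}(-\phi^1_{X_\nabla}(a))=-a$ for $a\in V$ sufficiently small.

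The main obstacle is the 3-associativity axiom (A2). It reduces to proving that the vector fields $\overleftarrow{\al}$ are genuinely invariant under left multiplication by $\gm$, equivalently that $\theta^A$ satisfies the Lie algebroid Maurer-Cartan equation
\[ \d\theta^A+\tfrac{1}{2}[\theta^A\wedge\theta^A]_A=0 \]
on $V$. This is where the Jacobi identity of the Lie algebroid bracket enters in an essential way: one differentiates the Yudilevich formula in the $s$-parameter of a two-parameter family of sections $\al_{t,s}$, uses the defining ODE for the flow $\phi^{t,s}_{\al_t}$, and invokes $T^\nabla=0$ together with the derivation property of $[\cdot,\cdot]_A$ to collapse the various terms. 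Once the Maurer-Cartan equation holds, for a triple $(v_1,v_2,v_3)$ in the common domain of definition both $\gm(\gm(v_1,v_2),v_3)$ and $\gm(v_1,\gm(v_2,v_3))$ arise as the time-$1$ flow of a single left-invariant vector field applied to $v_1$, and hence coincide.

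Finally, after shrinking $V$ and $\U$ to guarantee the openness and neighborhood conditions of Definition \ref{def:llg} and the connectedness assumptions (a)--(d) of Section \ref{sec:local:groupoid}, one obtains a local Lie groupoid integrating $A$. I expect the Maurer-Cartan step to be the only substantive piece of analysis; everything else is either formal manipulation of the spray flow or a direct unpacking of definitions. The chosen notion of local Lie groupoid in this paper only requires 3-associativity, which is exactly what this construction delivers — higher associativity will generally fail, and that failure is precisely what will occupy the subsequent sections.
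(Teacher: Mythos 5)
Your plan follows the second of the two routes the paper sketches for this theorem (the spray groupoid of \cite{CMS20b}), and you are right that the Jacobi identity must enter through a Maurer--Cartan-type identity for $\theta^A$, equivalently through the bracket relation $[\overleftarrow{\al},\overleftarrow{\be}]=\overleftarrow{[\al,\be]}$. However, there is a genuine gap: you never address the \emph{well-definedness} of the multiplication \eqref{eq:multiplication:exponential}, which is exactly what the paper singles out as ``the main difficulty''. The formula $\gm(v_1,v_2):=\phi^1_{\overleftarrow{\al}}(v_1)$ depends on a choice of section $\al$ (more generally, of a path in the $\t$-fiber from $0_{p(v_2)}$ to $v_2$ generated by a time-dependent section), and in the absence of an ambient groupoid nothing guarantees that two choices $\al,\al'$ with $\phi^1_{\overleftarrow{\al}}(0_{p(v_2)})=\phi^1_{\overleftarrow{\al'}}(0_{p(v_2)})=v_2$ satisfy $\phi^1_{\overleftarrow{\al}}(v_1)=\phi^1_{\overleftarrow{\al'}}(v_1)$. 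For a Lie group one proves this by invoking left-invariance of the flow under the group multiplication --- which is precisely the structure being constructed; your own phrase ``invariant under left multiplication by $\gm$'' has the same circularity. The repair is to deploy the bracket relation \emph{here}: it makes the diagonal distribution spanned by the vector fields $(\overleftarrow{\al},\overleftarrow{\al})$ on $V\timesst V$ involutive, so the graph of left translation by $v_1$ can be produced as a Frobenius leaf through $(0_{\s(v_1)},v_1)$, together with a smallness/connectedness argument on the fibers of $V$. That is where the substantive analysis lives.

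By contrast, the step you flag as the main obstacle, 3-associativity, is essentially formal once well-definedness is known: choosing $\gamma,\al$ with $\phi^1_{\overleftarrow{\gamma}}(0_{\s(v_1)})=v_2$ and $\phi^1_{\overleftarrow{\al}}(0_{\t(v_3)})=v_3$, both $\gm(\gm(v_1,v_2),v_3)$ and $\gm(v_1,\gm(v_2,v_3))$ are computed by the concatenated flow $\phi^1_{\overleftarrow{\al}}\circ\phi^1_{\overleftarrow{\gamma}}$ applied to $v_1$, precisely because the answer does not depend on which generating path to the second factor one uses. Two smaller points: axiom (A5) requires the multiplication and not only the identity $\phi^1_{X_\nabla}(-\phi^1_{X_\nabla}(a))=-a$ coming from (GS2); and taking the formula of Theorem \ref{thm:MC:exponential} as the \emph{definition} of $\theta^A$ requires checking independence of the choice of family $\al_{t,s}$ and that the resulting bundle map $\ker\d\t\to A$ is a fiberwise isomorphism near the zero section. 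Finally, note that the paper also records an independent route via the $A$-path space, $G=\exp_\nabla(V)/\sim$ with multiplication by concatenation, in which the well-definedness issue is absorbed into $A$-homotopy invariance.
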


There are two distinct approaches to this result: the original one from \cite{CrainicFernandes:integrability} using  the $A$-path space, and the more recent approach of \cite{CMS20b}, which uses $A$-connections to build the so-called spray groupoid.
 
\subsubsection{$A$-path local integration}
We let $I=[0,1]$. Given an $A$-path $a:I\to A$ we denote by $\gamma_a:=p\circ a:I\to M$ its base path. The set of $A$-paths:
\[ P(A):=\left\{a:I\to A~|~ \text{$a$ is $A$-path of class $C^1$, with $\gamma_a$ of class $C^2$}\right\} \]
is a Banach manifold, which we call the \emph{space of $A$-paths}.  

An $A$-path can be thought of as a Lie algebroid morphism 
\[ a\d t:TI\to A. \]
Then one can define an $A$-path homotopy between $A$-paths $a_1:TI\to A$ and $a_2:TI\to A$ to be a Lie algebroid morphism:
\[ \Phi: T(I\times I)\to A, \]
satisfying the boundary conditions:
\[ \Phi|_{TI\times\{0\}}=a_0,\quad \Phi|_{TI\times\{1\}}=a_1,\quad \Phi|_{\{0\}\times TI}=\Phi|_{\{1\}\times TI}=0. \]

Fix an $A$-connection $\nabla$ on $A$. The map that associates to an element $a_0\in A$ the geodesic $a:I\to A$ with initial condition $a(0)=a_0$ gives an exponential map:
\[ \exp_\nabla:A\to P(A). \] 
We use the same notation as we did before for the groupoid version of the exponential map. From the context it should be clear which one we refer to. If $\sim$ denotes the equivalence relation on $P(A)$ determined by $A$-homotopy, then we have the $A$-homotopy groupoid, as mentioned in the introduction:
\[ \G(A)=P(A)/\sim. \]
The multiplication is induced by concatenation of $A$-paths -- see \cite{CrainicFernandes:integrability} for details. The exponential map then induces a map into the quotient $\exp_\nabla:A\to \G(A)$, which when $\G(A)$ is smooth is the groupoid version of the exponential map. 

In general, the quotient $\G(A)$ fails to be smooth. However, it is proved in  \cite{CrainicFernandes:integrability} that for a sufficiently small neighborhood $V\subset A$ of the zero section, the quotient 
\[ G:=\exp_\nabla(V)/\sim \]
is smooth and becomes a local Lie groupoid with (partial) multiplication defined by concatenation of $A$-paths. Notice that this integration depends on the choice of connection $\nabla$ and open set $V$. However, two different choices lead to local Lie groupoids which have isomorphic shrinkings.

\subsubsection{The spray local integration}
Let us now consider the alternative method to obtain local integrations of $A$ proposed in \cite{CMS20b}. 

As we saw in Section \ref{subsection:groupoid:exponential}, given a Lie groupoid $\G\tto M$ and an $A$-connection $\nabla$ on its Lie algebroid $p:A\to M$, using the exponential map $\exp_\nabla:A\to\G$ one can pull back the Lie groupoid structure from $\G$ to a local Lie groupoid structure on a neighborhood $V\subset A$ of the zero section. The main observation is that, as we saw before, the resulting local groupoid structure is expressed purely in terms of Lie algebroid data. 

Hence, starting with \emph{any} Lie algebroid $A$ one chooses an open $V\subset A$ containing the zero section which is invariant under the map $a\mapsto -\phi^1_{X_\nabla}$ and defines all the structure maps exactly as in Section \ref{subsection:groupoid:exponential}. The main difficulty is to show that the local multiplication, given by \eqref{eq:multiplication:exponential}, is actually well-defined. We refer to \cite{CMS20b} for details.

\subsection{Local integration of IM-forms}
\label{sec:local:integration:forms}
Suppose we are given an IM-form $(\sigma,\nu):A\to \wedge^{k-1}T^*M\oplus \wedge^k T^*M$ on a Lie algebroid $A$ or, equivalently, the linear $A$-form 
\[ \omega\in\Omega^k(A),\quad \omega:=-\big(\d\sigma^*\theta_{k-1}+\nu^*\theta_k\big), \]
so that $\overline{\omega}:\oplus^k_A TA\to \R$ is a Lie algebroid 1-cocycle -- see Theorem \ref{thm:linear:form:IM:forms}. We would like to integrate it to a multiplicative $k$-form $\Omega$ on a local Lie groupoid $G\tto M$ integrating $A$. One can proceed in two distinct ways, as we now explain.

\subsubsection{Integration of 1-cocycles \cite{BC12}} 
If $G\tto M$ has 1-connected $\t$-fibers then the groupoid $\oplus^k_G TG\tto \oplus^k_MTM$ also has 1-connected $\t$-fibers. Hence, Lie's Second Theorem for local Lie groupoids allows to integrate the Lie algebroid 1-cocycle $\overline{\omega}:\oplus^k_A TA\to \R$ to a Lie groupoid 1-cocycle $\Phi:\oplus^k_G TG\to \R$. 

We claim that the map:
\[ (v_1,\dots,v_k)\mapsto \Phi(v_1,\dots,v_k), \]
is multilinear and skew-symmetric, so that $\Phi=\overline{\Omega}$ for some $\Omega\in\Omega^k(G)$. Notice that then Proposition \ref{prop:multiplicative:form:bundle:map} shows that $\Omega$ is a multiplicative form. To prove the skew-symmetry, one observes that the map:
\[ I:\oplus^k_G TG\to \oplus^k_G TG,\quad I(v_1,\dots,v_i,\dots,v_j,\dots,v_k):=(v_1,\dots,v_j,\dots,v_i,\dots,v_k), \]
is a groupoid morphism whose induced Lie algebroid morphism is:
\[ I_*:\oplus^k_A TA\to \oplus^k_A TA,\quad I_*(a_1,\dots,a_i,\dots,a_j,\dots,a_k):=(a_1,\dots,a_j,\dots,a_i,\dots,a_k). \]
Then $-\Phi\circ I:\oplus^k_G TG\to\R$ is also a groupoid 1-cocycle and the induced Lie algebroid 1-cocycle is:
\[ (-\Phi\circ I)_*=-\overline{\omega}\circ I_*=\overline{\omega}. \]
By uniqueness in Lie's Second Theorem we must have $\Phi=-\Phi\circ I$, which is precisely the skew-symmetry. For multilinearity one proceeds similarly replacing $I$ by the Lie groupoid morphisms:
\begin{align*}
\oplus^k_G TG\to \oplus^k_G TG,&\quad (v_1,\dots,\lambda v_i,\dots,v_k)\mapsto \lambda(v_1,\dots,v_i,\dots,v_k)\\
\oplus^{k+1}_G TG\to \oplus^k_G TG,&\quad (v_1,v_1',v_2,\dots,v_k)\mapsto (v_1+v_1',v_2\dots,v_k).
\end{align*}

Since $\Omega$ is a multiplicative form, by Proposition \ref{prop:multiplicative:form:bundle:map}, the Lie groupoid 1-cocycle $\overline{\Omega}$ differentiates to a Lie algebroid 1-cocycle $\overline{\omega}=(\overline{\Omega})_*:\oplus^k_ATA\to\R$ such that: 
\[ \omega=-\big(\d\sigma^*\theta_{k-1}+\nu^*\theta_k\big)=-\big(\d\sigma_{_\Omega}^*\theta_{k-1}+\sigma_{_{\d\Omega}}^*\theta_k\big). \]
The properties of the tautological forms imply that
\[\sigma=\sigma_{_{\Omega}},\qquad \nu=\sigma_{_{\d\Omega}}.\]
We conclude that $\Omega$ is a multiplicative form integrating the IM form $(\sigma,\nu)$.

\subsubsection{Integration using local formulas \cite{CMS20}} If one starts with the spray local integration $G\tto M$ associated with an $A$-connection $\nabla$, one can also use the explicit local formulas for multiplicative forms \eqref{eq:pullback:mult:form} to integrate an IM-form. 

An IM form $(\sigma,\nu):A\to \wedge^{k-1}T^*M\oplus \wedge^k T^*M$ defines a differential $k$-form $\Omega\in\Omega^k(U)$ on a sufficiently small neighborhood $U\subset A$ of the zero section by
\[ \Omega:=-\int_0^1 (\phi^t_{X_\nabla})^*\big(\d\sigma^*\theta_{k-1}+\nu^*\theta_{k} \big)\, \d t. \]
Then one needs to check that this form is multiplicative and that it induces the given IM form. To prove these it is more convenient to view both $(\sigma,\nu)$ and $\Omega$ as maps $\overline{\omega},\overline{\Omega}:\oplus^k_A TA\to\R$. Then:
\begin{equation}
\label{eq:local:formula:aux} 
\overline{\Omega}=\int_0^1 (\phi^t_{X_\nabla})^*\overline{\omega}\, \d t.
\end{equation}
We claim that this last formula is just the standard formula for integrating Lie algebroid 1-cocycles to Lie groupoid 1-cocycles, so the result follows. 

For that let $G\tto M$ be a local Lie groupoid with 1-connected $\t$-fibers and denote by $A\to M$ its Lie algebroid. Then given a Lie algebroid 1-cocycle $c:A\to\R$ the corresponding Lie groupod 1-cocycle $C:G\to\R$ is given by:
\begin{equation}
\label{eq:integrating:morphisms} 
C(g)=\int_0^1 c\big(a(t)\big)\, \d t,
\end{equation}
where $a:I\to A$ is the $A$-path obtained by applying the Maurer-Cartan form:
\[ a(t)=\theta^G(\dot{g}(t))\quad (t\in I). \]
to any path $g:I\to G$ in the $\t$-fiber, with $g(0)=1_{\t(g)}$ and $g(1)=g$. 

Let $G\tto M$ be the spray groupoid $V\tto M$ associated with a connection $\nabla$, the discussion after Lemma \ref{lem:MC:form} shows that the exponential map of $\nabla$ is just the identity map $\exp_\nabla(a)=a$. The  
$A$-path $t\mapsto \phi^t_{X_\nabla}(a_0)$ represents the path $g:I\to V$ in the spray groupoid which lies in the $\t$-fiber connecting the identity $1_{x_0}$ with $g=\exp_\nabla(a_0)=a_0$. It is now easy to check that \eqref{eq:local:formula:aux} is just a special instance of \eqref{eq:integrating:morphisms} applied to the local Lie groupoid $\oplus^k_G TG\tto \oplus^k_M TM$, where $G\equiv V$ is the spray groupoid.

\section{From local Lie groupoids to global Lie groupoids}
\label{sec:completion}

\subsection{Enlarging local groupoids}
\label{sec:associativity}
The associativity axiom (A2) for a local groupoid requires that for every triple of composable arrows one has:
\[ g(hk)=(gh)k, \]
provided both sides are defined. While for a (global) Lie groupoid this implies that all higher associativities hold, this is not true for a local Lie groupoid. For example, given 4 elements $g$, $h$, $k$ and $l$, the possible products form a pentagon:
\[
\xymatrix@C=5pt{ 
& & (gh)(kl)\ar@{-}[drr]\ar@{-}[dll] \\
((gh)k)l\ar@{-}[dr] & & & & g(h(kl))\ar@{-}[dl] \\
& (g(hk))l\ar@{-}[rr] & &  g((hk)l)}
\]
Each edge represents a move that uses only the 3-associativity property. In a global groupoid all vertices of the pentagon are defined and the 3-associativity implies 4-associativity. In a local Lie groupoid it is possible that, e.g., $(gh)(kl)$ and $(g(hk))l$ are defined but none of the other vertices are defined, and then 3-associativity does not allow one to conclude that 4-associativity holds. This happens, for example, in the case of the covering groupoids from Example \ref{ex:cover:groupoid} -- see \cite{FM20,Olver:Lie} for details. 

Notice that restricting or shrinking a local groupoid can make it ``more associative''. In fact, one has the following result:

\begin{proposition}[\cite{FM20}]
Let $G$ be a local Lie groupoid. For each $n\ge 3$ there is a restriction of $G$
which is $n$-associative.
\end{proposition}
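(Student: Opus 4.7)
I would induct on $n$, the base case $n=3$ being axiom (A2). The guiding principle is that 3-associativity already propagates to $n$-associativity on any open set of $n$-tuples for which \emph{all} $C_{n-1}$ parenthesizations happen to be simultaneously defined. Indeed, any two rooted planar binary trees with $n$ ordered leaves are related by a finite sequence of elementary rotations (connectivity of the associahedron, i.e.\ the Tamari lattice), and each rotation is precisely an instance of axiom (A2); hence when every intermediate parenthesization is defined, the corresponding iterated products chain together into a single equality.

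To implement this, for each rooted planar binary tree $T$ with $n$ ordered leaves let $\gm_T : \U_T \to G$ denote the iterated multiplication map with its natural open domain $\U_T \subset G\timesst\cdots\timesst G$, and set
\[
    V_n := \bigcap_T \U_T.
\]
This is open in the $n$-fold fibre product and contains the thin diagonal $\{(\gu(x),\dots,\gu(x)) : x\in M\}$ by axiom (A3). On $V_n$ the coherence argument above shows $\gm_T = \gm_{T'}$ for all $T, T'$. I would then construct the required restriction of $G$ by shrinking the multiplication domain so that every iterated product formed inside it automatically takes place in $V_n$: choose an open neighborhood $\W\subset G$ of $\gu(M)$ which is thin enough that every composable $n$-tuple with entries in $\W$ lies in $V_n$, and small enough that $\gm(\U\cap(\W\timesst\W))\subset \W$; then replace $\U$ by
\[
    \U' := \U\cap(\W\timesst\W)\cap\gm^{-1}(\W).
\]
In the resulting restriction $G'$, whenever a single parenthesization of a tuple is defined, all pair sub-products lie in $\U'\subset \W\timesst\W$, so every entry and every partial product sits in $\W$, forcing the tuple into $V_n$; by the previous step all parenthesizations then agree, giving $n$-associativity.

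The main obstacle is the existence of a sufficiently thin $\W$. One must control not merely the pointwise openness of $V_n$ around each $(\gu(x),\dots,\gu(x))$, but also the spread of the base points $\t(g_1),\dots,\t(g_n)$ of an admissible composable tuple; this is what forces $\W$ to be chosen carefully in the fibre directions of $\s$ and $\t$, so that composability propagates closeness along the whole chain. Once $\W$ is in place, one still has to verify that $\U'$ contains $(G\timesst M)\cup(M\timesst G)$ and satisfies the remaining axioms of Definition \ref{def:llg}, which is routine because $\gu(M)\subset\W$ by construction and the source, target, unit, and inverse maps already take values into the restricted structure.
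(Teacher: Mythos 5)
The paper does not actually prove this proposition --- it quotes it from \cite{FM20} --- and your overall strategy (reduce $n$-associativity to $3$-associativity via connectivity of the associahedron/Tamari lattice on the common domain $V_n=\bigcap_T\U_T$, then restrict the multiplication so that every product that can be formed lands in $V_n$) is indeed the strategy of the cited source. However, your construction of the restricted domain contains a genuine error. By Definition \ref{def:llg}, the multiplication domain of any local Lie groupoid with the same manifold of arrows $G$ must be an open neighborhood of $(G\timesst M)\cup(M\timesst G)$, i.e.\ it must contain \emph{every} pair $(g,u(s(g)))$ and $(u(t(g)),g)$ with $g\in G$. Your $\U'=\U\cap(\W\timesst\W)\cap\gm^{-1}(\W)$ is contained in $\W\timesst\W$, so it misses $(g,u(s(g)))$ for every $g\notin\W$; the claim that checking this axiom is ``routine because $\gu(M)\subset\W$'' is false --- that inclusion only handles pairs both of whose entries already lie in $\W$. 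Consequently $G'$ is not a restriction of $G$ (it is not a local Lie groupoid at all). Repairing this forces you to adjoin to $\U'$ an open neighborhood of the unit pairs inside $\U$, and then your key deduction breaks: a defined parenthesization may now use pairs from that adjoined neighborhood, so you can no longer conclude that every entry and every partial product lies in $\W$, hence not that the tuple lies in $V_n$. Words containing (near-)units must be treated separately, e.g.\ by an induction on $n$ that reduces them to shorter words via axiom (A3); this is where real work remains and it is absent from your argument.

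Second, the existence of the ``thin'' neighborhood $\W$ with $\W\timesst\cdots\timesst\W\subset V_n$ is the other load-bearing step, and you flag it as the main obstacle without resolving it. It is true, but not automatic: $V_n$ is only a neighborhood of the thin diagonal $\{(\gu(x),\dots,\gu(x)):x\in M\}$, its ``width'' may degenerate as $x$ varies over a noncompact $M$, and a composable $n$-tuple with entries in $\W$ has base points that drift along the chain. One needs paracompactness of $M$ (a star-refinement or Lebesgue-number argument applied to a cover by box neighborhoods $W_x\timesst\cdots\timesst W_x\subset V_n$) to produce such a $\W$. So the proposal identifies the correct mechanism --- the coherence argument in your first step is fine --- but leaves both points on which the proof actually turns unproved, and asserts one of them incorrectly.
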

 
In the previous proposition we have \emph{fix} $n$. A different issue is whether one can find a restriction which is $n$-associative for all $n\ge 3$.

\begin{definition}
    A local Lie groupoid is called \emph{globally associative} if it is
    associative to every order $n\geq 3$.
\end{definition}

Obviously, a local groupoid $G\tto M$ obtained by restricting or shrinking a global Lie groupoid, as in Example \ref{ex:restriction}, is globally associative. For example, a local Lie \emph{group} $G$ always has a shrinking that is globally associative: if $\gg$ is the Lie algebra of $G$ then a shrinking of $G$ is isomorphic to a neighborhood of the identity of the 1-connected integration $\G(\gg)$.  However, for a local Lie \emph{groupoid} integrating a non-integrable Lie algebroid  this argument fails.

We will say that a local Lie groupoid $G\tto M$ is \emph{globalizable} or \emph{enlargeable} if it is isomorphic to a 
restriction of an open neighborhood of the unit section of a Lie groupoid $\G\tto M$. The problem of determining if a Lie group is globalizable was first studied by Mal'cev \cite{Malcev41} -- see also \cite{Olver:Lie} -- who showed that the failure of global associativity is the only obstruction to embedding a local group in a global one. In \cite{FM20}, Mal'cev's Theorem was extended to realm of local Lie groupoids:

\begin{theorem}[Mal'cev's theorem for groupoids \cite{FM20}]
    A local Lie groupoid is globalizable if and only if its multiplication is globally associative.
\end{theorem}

We will sketch a proof in the next two paragraphs. For now we observe the following consequence:

\begin{corollary}
A local Lie groupoid has a restriction which is globally associative if and only if it has integrable Lie algebroid.
\end{corollary}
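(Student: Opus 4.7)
My plan is to deduce both implications from Mal'cev's theorem for groupoids, together with the essential uniqueness of local integrations contained in Theorem~\ref{thm:local:integration:grpd}.

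The forward direction is immediate. If $G$ admits a restriction $G'$ which is globally associative, then Mal'cev's theorem produces a Lie groupoid $\G\tto M$ of which $G'$ is (up to isomorphism) a restriction of a neighborhood of the unit section. Since neither restricting nor shrinking modifies the Lie algebroid, $\LieA(G)=\LieA(G')=\LieA(\G)$, so the Lie algebroid of $G$ is integrable.

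For the converse, suppose $A:=\LieA(G)$ integrates to a global Lie groupoid $\G\tto M$. I would first note that any open neighborhood $\G^{\mathrm{loc}}\subset\G$ of the unit section, with the inherited structure, is a globally associative local Lie groupoid integrating $A$: every iterated product well-defined in $\G^{\mathrm{loc}}$ agrees with its value in $\G$. Next I would compare $G$ with $\G^{\mathrm{loc}}$ using the essential uniqueness of local integrations of $A$ (both the $A$-path and the spray constructions are arranged so that any two local integrations of $A$ have isomorphic shrinkings). This yields a shrinking $G_0\subset G$ isomorphic to a shrinking of $\G^{\mathrm{loc}}$, and hence globally associative.

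The final step, which I expect to be the most delicate, is to promote the globally associative shrinking $G_0$ of $G$ to a globally associative \emph{restriction} in the strict sense of Section~\ref{sec:local:groupoid}. Keeping the manifold of arrows of $G$ unchanged, I would take as new multiplication domain
\[ \U'' := \{(g,h)\in\U : g,\, h,\, m(g,h)\in G_0\}\cup\{(g,\gu(\s(g))),(\gu(\t(g)),g) : g\in G\}, \]
which is open in $\U$ and contains every required identity pair. In the resulting restriction $G''$, absorbing identity factors reduces any well-defined iterated product to one involving only elements of $G_0$ whose intermediate results also lie in $G_0$; the global associativity of $G_0$ then forces global associativity of $G''$. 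The real obstacle is precisely this shrinking-to-restriction conversion: I must check that $\U''$ really defines a local Lie groupoid in the sense of Definition~\ref{def:llg} and that every composable chain in $G''$ is pushed into the ``good region'' $G_0$ where associativity is known to hold, both of which amount to careful bookkeeping with identities and intermediate products.
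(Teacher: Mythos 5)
Your forward direction is correct and is exactly the intended use of Mal'cev's theorem: a globally associative restriction is globalizable, and neither restricting nor shrinking changes the Lie algebroid. The converse, however, has a genuine gap at the step where you convert the globally associative shrinking $G_0$ into a restriction. First, the set $\U''$ you propose is not open: the identity pairs $(G\timesst M)\cup(M\timesst G)$ form a submanifold of positive codimension in $G\timesst G$, and Definition~\ref{def:llg} requires the domain of $m$ to be an \emph{open neighborhood} of this set, so adjoining the identity pairs themselves does not yield a local Lie groupoid. Second, once you repair this by adjoining an open neighborhood $\W$ of the identity pairs, the domain unavoidably contains pairs $(g,h)$ with $g\notin G_0$ and $h$ close to, but different from, a unit. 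Such factors cannot be ``absorbed'': a well-formed word in the resulting restriction may contain arbitrarily large elements of $G$, and its admissible parenthesizations do not reduce to products carried out inside $G_0$, so global associativity of $G_0$ says nothing about them. This is precisely where the danger lies: the covering local groups of Example~\ref{ex:cover:groupoid} are indistinguishable from $\R^2$ near the units, so their failure of $4$-associativity necessarily involves elements outside any globally associative shrinking.

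A smaller issue is that you attribute the ``essential uniqueness of local integrations'' to Theorem~\ref{thm:local:integration:grpd}, which is purely an existence statement; comparing $G$ with a neighborhood of the units of $\G$ requires Lie's Second Theorem for local groupoids, i.e.\ the morphism $G\to\G(A)$ built under hypotheses (H1)--(H3) of Section~\ref{sec:associators}, and even then it only controls a shrinking. The machinery the paper develops for the converse is designed exactly to control associativity on \emph{all} of $G$: integrability of $A$ is equivalent to uniform discreteness of the monodromy groups, Theorem~\ref{thm:associators} identifies $\Assoc_x(G)=\NN_x(A)\cap G_x$ for a shrunk $G$, and uniform discreteness of $\Assoc(G)$ is what permits cutting down the multiplication domain on the full arrow manifold (in the spirit of the proposition producing $n$-associative restrictions for each fixed $n$). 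Your argument would become viable if the ad hoc domain $\U''$ were replaced by one controlled by the associators, or equivalently by injectivity of the completion map $G\to\AsCo(G)$ from Theorem~\ref{thm:AC:smooth}; as written, the shrinking-to-restriction step does not go through.
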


\begin{proof}
If a local Lie groupoid $G$ has a restriction $G'$ which is globally associative then $G$ and $G'$ have the same Lie algebroid $A$. By the theorem, $G'$ is globalizable, so $A$ is an integrable algebroid. Conversely, if a local Lie groupoid $G$ has an integrable Lie algebroid $A$, then there is a neighborhood $G'\subset G$ of the units which is isomorphic to a neighborhood in $\G(A)$ of the units. It follows that $G'$ is globally associative.
\end{proof}

\begin{remark}
\label{rem:Bailey:Gualtieri}
The previous results show that one should be careful with the associativity axiom for a local Lie groupoid $G$. For example, in the definition used in \cite{BG16} this axiom reads as follows:
\begin{equation}
\label{eq:too:strong} 
\txt{If either $(gh)k$ or $g(hk)$\\ is defined} \quad \Longrightarrow \quad \txt{both $(gh)k$ and $g(hk)$ are defined\\ and $(gh)k=g(hk)$}.
\end{equation}
Note that this implies that $G$ is globally associative, hence has integrable Lie algebroid. So the definition in \cite{BG16} excludes, e.g., the $A$-path local integration and the spray groupoid for non-integrable Lie algebroids.
\end{remark}

\subsection{The associative completion}
\label{sec:associative:completion}
Mal'cev's Theorem is based on an important construction associated with a local groupoid, namely its  {\em associative completion}.

Given a local Lie groupoid $G\tto M$, its associative completion is a groupoid $\AsCo(G)\tto M$ obtained as follows. One introduces the set of \emph{well-formed} words on $G$ formed by words
$(w_1,\dots,w_n)$ in $G$ whose source and target match:
\[ W(G) :=\bigsqcup_{n\geq 1} \underbrace{G\timesst G\timesst \cdots \timesst G}_{\text{$n$ times}}, \]
Given a well-formed word $w=(w_1,\dots,w_k,w_{k+1},\dots,w_n)$ such that  $w_k$ and $w_{k+1}$ can be
composed, we have a new well-formed word $w' = (w_1,\dots,w_kw_{k+1},\dots,w_n)$, and we say that $w'$ is is obtained from $w$ by \emph{contraction} or that $w$ is obtained from $w'$ by \emph{expansion}.
Contractions and expansions generate an equivalence relation $\sim$ on $W(G)$,
and one defines the associative completion of $G$ to be the space of
equivalence classes:
\[ \AsCo(G) := W(G)/{\sim}.\]
Notice that $\AsCo(G)$ has a groupoid structure:
\begin{itemize}
\item The source and target maps are given, respectively, by:
\[ [(w_1,\dots,w_n)]\mapsto \s(w_n),\quad [(w_1,\dots,w_n)]\mapsto \t(w_1);\]
\item Multiplication is given by concatenation of words:
\[ [(w_1,\dots,w_n)]\cdot [(w'_1,\dots,w'_m)]=[(w_1,\dots,w_n,w'_1,\dots,w'_m)];\]
\item The unit section is the map:
\[ x\mapsto [(1_x)] ;\]
\item Inversion is given by:
\[ [(w_1,\dots,w_n)]\mapsto [(w_n^{-1},\dots,w_1^{-1})], \]
where one uses Proposition \ref{prop:generate:inverses} to find representatives for equivalence classes of the form $[(w_1,\dots,w_n)]$ where every $w_i$ is invertible. 
\end{itemize}
There is an obvious completion map $G\to \AsCo(G)$, which is a morphism of local groupoids. 

In general, if $G$ is a local Lie groupoid, $\AsCo(G)$ is a topological groupoid for the quotient topology, but may fail to be a Lie groupoid. Moreover, the associative completion is characterized by the following universal property: for every topological groupoid $\H$ and every continuous morphism of local groupoids $\phi:G\to \H$ there is a unique morphism of topological groupoids $\widehat{\phi}:\AsCo(G)\to \H$ making the following diagram commute:
\[
\xymatrix{
G\ar[d]\ar[r]^{\phi} & \H \\
\AsCo(G)\ar@{-->}[ru]_{\widehat{\phi}}
}
\]
Notice also that given a morphism of local Lie groupoids $\phi:G\to H$ there is an induced morphism of topological groupoids $\AsCo(\phi):\AsCo(G)\to \AsCo(H)$ making the following diagram commute:
\[
\xymatrix{
G\ar[d]\ar[r]^{\phi} & H\ar[d] \\
\AsCo(G)\ar[r]_{\AsCo(\phi)}& \AsCo(H)
}
\]
Hence, $\AsCo(-)$ is a functor from the category of local Lie groupoids to the category of topological groupoids.

\subsection{Associators and smoothness}
The next natural question, which is also at the core of Mal'cev's Theorem is:
\begin{itemize}
\item Given a local Lie groupoid $G$, when is $\AsCo(G)$ a Lie groupoid?
\end{itemize}
Here, of course, the smooth structure we are wondering about for $\AsCo(G)$ is the quotient smooth structure obtained from $W(G)$ -- the set of well-formed words -- which is itself a manifold with connected components of possibly different dimension.

Given a local Lie groupoid we will call an element in the isotropy 
\[ g\in G_x=\s^{-1}(x)\cap \t^{-1}(x) \] 
an \emph{associator} at $x$ if there is a well-formed word $(w_1,\dots,w_n)$ which admits two sequences of contractions: one ending at $g$ and the other one ending at the identity $1_x$. The set of all associators will be denoted $\Assoc(G)$ and is contained in the kernel of the completion map $G\to \AsCo(G)$. One of the main results in \cite{FM20} shows that the associators control the smoothness of $\AsCo(G)$:

\begin{theorem}[\cite{FM20}]
\label{thm:AC:smooth}
If $G$ is a local Lie groupoid, then $\AsCo(G)$ is smooth if and only if $\Assoc(G)$ is uniformly discrete in $G$, i.e., if there is an open $M\subset U\subset G$ such that:
\[ U\cap \Assoc(G)=M. \]
In this case, the completion map $G \to \AsCo(G)$ is a local diffeomorphism, so $G$ and $\AsCo(G)$ have the same Lie algebroid.
\end{theorem}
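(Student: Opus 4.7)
The plan is to analyze the completion map $\pi : G \to \AsCo(G)$, which is automatically a continuous morphism of local topological groupoids restricting to the identity on $M$, and to show that $\pi$ is a local diffeomorphism onto an open neighborhood of $M$ in $\AsCo(G)$ precisely when $\Assoc(G)$ is uniformly discrete. Both directions of the stated equivalence then follow at once, as does the ``same Lie algebroid'' conclusion, since $\pi|_M=\id_M$.

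For the forward direction, assume $\AsCo(G)$ is a Lie groupoid. The word space $W(G)=\bigsqcup_n G^{(n)}$ is a disjoint union of smooth manifolds and $\AsCo(G)=W(G)/\sim$ is smooth, so the restriction to length-one words gives a smooth morphism $\pi:G\to\AsCo(G)$. Because $\pi$ intertwines the multiplications of $G$ and $\AsCo(G)$ on a neighborhood of $M$, both local groupoids locally integrate the same Lie algebroid, and $\pi$ must be a local diffeomorphism on some open $U\ni M$. Then $\pi^{-1}(M)\cap U=M$, and since $\Assoc(G)\subseteq\Ker(\pi)$ by the very definition of an associator, we conclude $\Assoc(G)\cap U=M$.

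For the reverse direction, fix an open $U\ni M$ with $U\cap\Assoc(G)=M$. I would build the smooth structure on $\AsCo(G)$ in four steps: (i) shrink $U$ to some $U'\ni M$ on which $\pi|_{U'}$ is injective; (ii) verify that $\pi(U')$ is open in $\AsCo(G)$ for the quotient topology and that $\pi|_{U'}$ is a homeomorphism onto its image, transferring the smooth structure of $U'$ to a chart around $M$ in $\AsCo(G)$; (iii) propagate this chart to the rest of $\AsCo(G)$ by left translations, using that $\AsCo(G)$ is algebraically a groupoid and that $\pi(U')$ generates $\AsCo(G)$ (cf.\ Proposition~\ref{prop:generate:inverses}); (iv) check that source, target, unit, inverse, and multiplication are smooth. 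Steps (ii)--(iv) are routine once (i) is available: near units the structure maps of $\AsCo(G)$ coincide with those of $G$ via $\pi$, and away from units they are obtained by pre- and post-composing with left translations, which are diffeomorphisms between the resulting charts by construction.

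The main obstacle is step (i), which I would tackle via the refinement lemma $\Ker(\pi)\cap U''\subseteq\Assoc(G)$ for some $U''\ni M$. Granting this, if $g,h\in U'$ satisfy $\pi(g)=\pi(h)$ and $U'$ is small enough that $gh^{-1}$ is defined and lies in $U''$, then $gh^{-1}\in\Ker(\pi)$, hence is an associator, hence a unit, so $g=h$. The refinement lemma itself is a combinatorial statement about $\sim$: a zig-zag of contractions and expansions from $(g)$ to $(1_x)$ in $W(G)$ must be converted into a \emph{single} well-formed word admitting contractions to both endpoints. The natural approach is to take a common expansion of the zig-zag; the delicacy is that such a common expansion may fail to be well-formed, since the intermediate partial multiplications required to realize it need not lie in the multiplication domain $\U$. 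I would handle this by shrinking $U''$ further so that every word appearing in the zig-zag, together with every word produced en route to its common refinement, stays $C^0$-close to the diagonal $M$ and therefore inside $\U$; this uses the openness of $\U$, the continuity of $\gm$, and the connectedness assumptions on $G$ from Section~\ref{sec:local:groupoid}. Once (i) is in hand, $\pi|_{U'}$ is a local diffeomorphism, and the identification $\LieA(G)\cong\LieA(\AsCo(G))$ is automatic.
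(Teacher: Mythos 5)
Your overall architecture is sound and, despite the different packaging, quite close to the paper's: the paper builds a chart directly at an arbitrary point $[w_1,\dots,w_n]$ of $\AsCo(G)$ by sliding each letter $w_i$ along a local bisection, which is essentially your ``chart near the units transported by translations'' carried out concretely; in both versions everything reduces to an injectivity statement that must be extracted from uniform discreteness of $\Assoc(G)$. Two smaller remarks before the main one. In the forward direction, ``both local groupoids integrate the same Lie algebroid, hence $\pi$ is a local diffeomorphism'' is circular as written; the honest argument is that length-one words admit no contractions, so $\Ker\d\Phi$ vanishes on $G^{(1)}$ and $\pi$ is an immersion, after which a dimension count (or the fact that the units of $\AsCo(G)$ are covered by length-one words) finishes it. And your step (iii) still requires injectivity of the translated charts at points far from the units, which is not ``routine'': it is the same combinatorial issue discussed below, now for words of arbitrary length.

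The genuine gap is in your proof of the refinement lemma $\Ker(\pi)\cap U''\subseteq\Assoc(G)$. If $g\in\Ker(\pi)$, the witness is a zig-zag of contractions and expansions joining $(g)$ to $(1_x)$ inside $W(G)$; this zig-zag is \emph{given}, not chosen, and its intermediate words may be arbitrarily long with letters nowhere near $M$. Shrinking $U''$ constrains only $g$, so it cannot force ``every word appearing in the zig-zag'' to stay $C^0$-close to the diagonal, and openness of $\U$ together with continuity of $\gm$ tells you nothing about those distant words. Converting a zig-zag into a single word admitting contraction sequences to both endpoints --- i.e., passing from $\Ker(\pi)$ to $\Assoc(G)$ --- is a confluence-type statement about the rewriting system that does not hold for formal reasons alone; establishing it is one of the main technical points of \cite{FM20}, where it relies on the standing connectivity hypotheses (a)--(e) of Section \ref{sec:local:groupoid} (connectedness of the set of composable pairs over a point, the existence of paths deforming one factor of a composable pair to a unit, etc.) to deform and re-bracket words globally, not on a local continuity or shrinking argument. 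Until that lemma is actually proved, step (i), and with it the entire reverse direction, is unsupported.
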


\begin{proof}[Sketch of the proof]
The main idea is to use local bisections of the local Lie groupoid $G\tto M$ to construct local charts for $\AsCo(G)$. Given an element $g\in\AsCo(G)$, represented by a word $(w_1,\dots,w_n)$, to construct a chart near $g$ one considers submanifolds $N_i \subset G$ of the same dimension as $M$ through $w_i$ that are transverse to both source and target fibers, i.e., local bisections of $G$ through $w_i$. Then $\s$ and $\t$ define diffeomorphisms between the $N_i$'s and open subsets of $M$. Write $\s_i =\s|_{N_i}$ and $\t_i =\t|_{N_i}$ for these diffeomorphisms.

Fix $k\in\{1,\dots,n\}$ and let $U$ be a small neighborhood of $w_k$. Then a chart near $g$ is defined as
\[ \varphi : U \to \AsCo(G) : h \mapsto [h_1,\dots,h_{k-1},h,h_{k+1},\dots,h_n] \]
where
    \[
        h_{k+1} = \t_{k+1}^{-1}(\s(h)), \ 
        h_{k+2} = \t_{k+2}^{-1}(\s(h_{k+1})),
        \dots, \ 
        h_n = \t_n^{-1}(\s(h_{n-1})),
    \]
    and
    \[
        h_{k-1} = \s_{k-1}^{-1}(\t(h)), \ 
        h_{k-2} = \s_{k-2}^{-1}(\t(h_{k-1})),
        \dots, \
        h_1 = \s_1^{-1}(\t(h_2)).
    \]
For $U$ sufficiently small, the map $\varphi$ is a well-defined local diffeomorphism that maps $w_k$ to $g$. The main point is that, eventually after choosing $U$ even smaller, the uniform discreteness ensures that $\varphi$ is injective. One then checks that the transition maps between different charts are smooth, so one obtains a possible non-Hausdorff, second countable, smooth structure. Still, the source and target fibers are Hausdorff, and $\AsCo(G)$ is a Lie groupoid. For all the details we refer to \cite{FM20}.
 \end{proof}

\begin{proof}[Proof of Mal'cev's Theorem]
If $G$ is a globally associative local Lie groupoid, then the only associators $\Assoc(G)$ are the units. Hence,  $\Assoc(G)$ is uniformly discrete and so, by the theorem, $\AsCo(G)$ is a Lie groupoid. Global associativity also implies that the completion map $G \to \AsCo(G)$ is injective, so $G$ is isomorphic to its image under the completion map, and so it is globalizable.
\end{proof}

\subsection{Associators and integrability}
\label{sec:associators}
An obvious consequence of Theorem \ref{thm:AC:smooth} is that a local Lie groupoid $G\tto M$ with uniformly discrete associators $\Assoc(G)$ must have an integrable Lie algebroid. The converse is not quite true: integrable Lie algebroids (even Lie algebras!) can have local integrations with non-discrete associators. We refer to \cite{FM20} for an example.  Still, there is a relationship between integrability of a Lie algebroid $A$ and the associators of a local integration $G\tto M$, provided the local integration is not ``too large''. 

To express this relationship precisely, let us recall that for a Lie algebroid $A$ one has \emph{monodromy groups} $\NN_x(A)$ which control the integrability of $A$. These groups arise by looking at the  isotropy groups $\G(A)_x$ of the $A$-homotopy groupoid $\G(A)\tto M$: the inclusion of the isotropy Lie algebra $\gg_x=\ker\rho_x\hookrightarrow A$ integrates to a surjective group morphism
\[ \G(\gg_x)\to \G(A)_x^0, \]
whose kernel is precisely $\NN_x(A)$. This monodromy group is a subgroup of the center and we have:
\[ \G(A)_x^0=\G(\gg_x)/\NN_x(A). \]
Clearly, if $\G(A)$ is smooth then $\G(A)_x^0$ is smooth, and the subgroup $\NN_x(A)\subset \G(\gg_x)$ must be discrete. Conversely, one has the following fundamental result:

\begin{theorem}[\cite{CrainicFernandes:integrability}]
A Lie algebroid $A\to M$ is integrable if and only if its monodromy groups are uniformly discrete, i.e., if and only if there exists an open $V\subset A$ containing the zero section such that:
\[ \exp_x(V\cap \gg_x)\cap \NN_x(A)=\{0\}, \qquad \forall\, x\in M,\]
where $\exp_x:\gg_x\to \G(\gg_x)$ denotes the exponential map.
\end{theorem}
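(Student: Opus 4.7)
The plan is to prove the two implications separately, leveraging the bridge between monodromy groups and associators of a local integration discussed just before the statement.

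For the ``only if'' direction, assume $A$ is integrable, so that $\G(A)\tto M$ is a Lie groupoid. Then each isotropy group $\G(A)_x$ is a Lie group and its identity component satisfies $\G(A)_x^0=\G(\gg_x)/\NN_x(A)$, which already forces each $\NN_x(A)$ to be a discrete subgroup of $\G(\gg_x)$. To upgrade pointwise discreteness to \emph{uniform} discreteness, I would pick an $A$-connection $\nabla$ with exponential map $\exp_\nabla:A\to\G(A)$ and a neighborhood $V\subset A$ of the zero section on which $\exp_\nabla$ is a diffeomorphism onto its image (this uses smoothness of $\G(A)$). Since $\exp_\nabla$ restricts on each isotropy Lie algebra $\gg_x\subset A$ to the ordinary Lie group exponential $\exp_x:\gg_x\to \G(\gg_x)$ followed by the projection $\G(\gg_x)\to \G(A)_x^0$, the injectivity of $\exp_\nabla$ on $V$ immediately yields $\exp_x(V\cap \gg_x)\cap \NN_x(A)=\{0\}$ for every $x\in M$.

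For the ``if'' direction, the strategy is to build a local Lie groupoid $G$ integrating $A$ whose associators are uniformly discrete, and then invoke Theorem \ref{thm:AC:smooth} to conclude that $\AsCo(G)$ is a Lie groupoid integrating $A$. Starting from the hypothesis, pick the neighborhood $V\subset A$ provided by uniform discreteness, and form the spray groupoid $G$ of Section \ref{sec:local:integration:grpd} on (a possibly smaller) $V$, so that $G$ is a local Lie groupoid whose underlying manifold is a neighborhood of the zero section in $A$ and whose exponential map is the identity.

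The crucial step, and the main obstacle, is to identify the associators of $G$ lying in the isotropy over $x$ with elements of $\NN_x(A)$ as seen through $\exp_x$. The idea is that a well-formed word $(g_1,\dots,g_n)$ in $G$ encodes a concatenation of short $A$-paths, and two sequences of contractions ending, respectively, at an isotropy element $g\in G_x$ and at the identity $1_x$, realize an $A$-homotopy between the two corresponding concatenations. Passing to $\G(A)$, this identifies $g$ with an element of $\NN_x(A)\subset \G(\gg_x)$ expressed via $\exp_x$; conversely, an element of $\NN_x(A)$ in the image $\exp_x(V\cap \gg_x)$ produces, through suitable subdivision of the $A$-homotopy so that each piece fits in the domain of the spray multiplication, a well-formed word whose contractions witness $g$ as an associator. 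Consequently, $\Assoc(G)\cap \gg_x\subset \exp_x(V\cap \gg_x)\cap \NN_x(A)=\{0\}$. Since $\Assoc(G)$ is contained in the isotropy bundle of $G$, this gives an open neighborhood $U$ of $M$ in $G$ with $U\cap \Assoc(G)=M$, i.e., uniform discreteness of the associators.

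Finally, Theorem \ref{thm:AC:smooth} produces a (global) Lie groupoid $\AsCo(G)\tto M$ with the same Lie algebroid as $G$, and hence integrating $A$, completing the proof. The delicate point worth spelling out carefully is the two-way translation between associator-witnessing contraction sequences in $W(G)$ and $A$-homotopies of loops representing monodromy elements, together with a uniform control on the size of the $A$-paths involved so that the correspondence takes place inside the chosen neighborhood $V$.
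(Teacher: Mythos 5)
The paper does not actually prove this theorem; it states it as a citation to \cite{CrainicFernandes:integrability}, where the proof goes through a direct analysis of the smooth structure on the $A$-path space $P(A)/\sim$ and of how the monodromy groups obstruct the quotient from being a manifold. Your proposal instead reassembles a proof from the local--global machinery developed in the paper itself (spray groupoid, Proposition \ref{prop:assoc-is-mon}, Theorems \ref{thm:AC:smooth} and \ref{thm:associators}), which is precisely the alternative route the survey is advocating; both directions of your argument are sound in outline. Two remarks. First, for the ``if'' direction you do not need the full equality $\Assoc_x(G)=\NN_x(A)\cap G_x$ of Theorem \ref{thm:associators} (whose hard inclusion requires the multiplicative lift $P:G\to P(A)$ and is what your ``crucial step'' paragraph is really sketching): the easy inclusion $\Assoc_x(G)\subset\NN_x(A)$ of Proposition \ref{prop:assoc-is-mon}, combined with the containment of the image of $G_x$ in $\exp_x(V\cap\gg_x)$, already forces $\Assoc(G)=M$, which is trivially uniformly discrete; so you can avoid the delicate subdivision argument entirely. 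Second, the point that genuinely deserves care in both directions is the identification of the map $G_x\to\G(\gg_x)$ (respectively of $\exp_\nabla|_{\gg_x}$) with the Lie-theoretic exponential $\exp_x$: the geodesic exponential of the restricted $A$-connection agrees with $\exp_x$ only to first order, and the condition in the statement is phrased in terms of $\exp_x$, so one must shrink $V$ uniformly in $x$ to pass between the two charts on $\G(\gg_x)$ near the identity. This is exactly the kind of uniformity issue that distinguishes ``discrete'' from ``uniformly discrete'', and it is handled in \cite{FM20} and \cite{CrainicFernandes:integrability}; your closing sentence correctly flags it but leaves it to the references.
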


Now, let $G\tto M$ be a local Lie groupoid integrating a Lie algebroid $A$. We will assume that $G$ has been shrunk so that:
\begin{itemize}
\item[(H1)] $G_x$ is a 1-connected local Lie group.
\end{itemize}
Under this assumption, we have a local Lie group homomorphism $G_x \to \G(\g_x)$ defined as follows. If $g\in G_x$, choose a path in $G_x$ from $x$ to $g$. Differentiate this path to get a $\g_x$-path and hence an element of $\G(\g_x)$. Since $G_x$ is simply-connected, this map is well-defined: any two paths in $G_x$ from $x$ to $g$ are homotopic, and such a homotopy induces a $\g_x$-homotopy. By shrinking $G$ further, we can also assume that:
\begin{itemize}
\item[(H2)] the map $G_x\to \G(\g_x)$ is injective.
\item[(H3)] the $\t$-fibers of $G$ are 1-connected.
\end{itemize}
Again, this allows us to construct a local homomorphism of topological groupoids $G\to \G(A)$:  if $g\in G$ has target $x$, choose a path in $\t^{-1}(x)$ from $x$ to $g$. Differentiate this path to get an $A$-path and hence an element of $\G(A)$. Since $\t^{-1}(x)$ is simply-connected, this map is well-defined, because any two paths in $\t^{-1}(x)$ from $x$ to $g$ are homotopic, and this homotopy induces an $A$-homotopy. 

Assumptions (H1)-(H3) are already sufficient to relate the associators with the monodromy groups, Namely, we have:
\begin{proposition}
    \label{prop:assoc-is-mon}
Let $G$ be a local Lie groupoid satisfying (H1)-(H3). Then under the natural map $G_x\to \G(\g_x)$ we have:
\[ \Assoc_x(G) \subset \NN_x(A). \]
\end{proposition}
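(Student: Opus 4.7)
The plan is to exploit the universal property of the associative completion. Under (H3), the local groupoid morphism $\phi:G\to\G(A)$ described immediately before the statement is defined on all of $G$: for $g\in G$ with $\t(g)=x$, pick any path $\gamma:I\to\t^{-1}(x)$ from $1_x$ to $g$, let $a(t):=\theta^G(\dot\gamma(t))$ be the associated $A$-path, and set $\phi(g):=[a]\in\G(A)$. Simple-connectedness of $\t^{-1}(x)$ makes this independent of $\gamma$, and compatibility of concatenation in $\t$-fibers with multiplication in $G$ and in $\G(A)$ makes $\phi$ a morphism of local topological groupoids. By the universal property of $\AsCo(-)$ recalled in Section \ref{sec:associative:completion}, $\phi$ factors as
\[ G\ \longrightarrow\ \AsCo(G)\ \xrightarrow{\ \widehat{\phi}\ }\ \G(A). \]
Now if $g\in\Assoc_x(G)$, then by the very definition of an associator the classes $[(g)]$ and $[(1_x)]$ coincide in $\AsCo(G)$, and therefore $\phi(g)=\widehat{\phi}([(g)])=\widehat{\phi}([(1_x)])=1_x$ in $\G(A)$.

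Next I would relate $\phi|_{G_x}$ to the map $\iota:G_x\to\G(\gg_x)$ provided by (H1)-(H2). The key assertion is that the square
\[
\xymatrix{
G_x\ar[r]^---{\iota}\ar@{^{(}->}[d] & \G(\gg_x)\ar[d]^{q}\\
G\ar[r]_---{\phi}& \G(A)
}
\]
commutes, where $q:\G(\gg_x)\to\G(A)$ is the canonical map that regards a $\gg_x$-path as an $A$-path. Recall that $q$ has image the connected component $\G(A)_x^0=\G(\gg_x)/\NN_x(A)$ and kernel precisely $\NN_x(A)$. The commutativity of the square is essentially tautological: a path $\gamma$ in $G_x$ from $1_x$ to $g$ is also a path in $\t^{-1}(x)$, and along it $\theta^G(\dot\gamma(t))$ takes values in $\gg_x\subset A$, so one and the same curve represents both $\iota(g)\in\G(\gg_x)$ and $\phi(g)\in\G(A)$, i.e.\ $\phi(g)=q(\iota(g))$.

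Combining the two steps, for any $g\in\Assoc_x(G)$ one has $q(\iota(g))=\phi(g)=1_x$ in $\G(A)$, so $\iota(g)\in\ker q=\NN_x(A)$, which is the claimed inclusion. The main obstacle is the careful verification that $\phi$ is a well-defined continuous morphism of local topological groupoids, so that the universal property of $\AsCo$ genuinely applies and produces $\widehat{\phi}$; once that is in place, compatibility of the Maurer-Cartan form with concatenation of $A$-paths (which is built into the construction of $\G(A)$ in \cite{CrainicFernandes:integrability}) takes care of both the factorization and the commutativity of the square above.
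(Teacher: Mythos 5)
Your proposal is correct and follows essentially the same route as the paper: factor the morphism $G\to\G(A)$ through $\AsCo(G)$ (the paper phrases this as applying the functor $\AsCo(-)$, you invoke the universal property — same content) to see that associators die in $\G(A)$, then observe that on isotropy this map factors through $\G(\gg_x)\to\G_x(A)$, whose kernel is $\NN_x(A)$. The only difference is that you spell out the well-definedness of the map $G\to\G(A)$, which the paper establishes in the discussion preceding the statement.
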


\begin{proof}
Applying the functor $\AsCo(-)$ to the morphism $G\to \G(A)$ we obtain a commutative triangle:
\[
\xymatrix{G \ar[rr]\ar[dr] &&\G(A) \\
& \AsCo(G)\ar[ur]}
\]
Passing to isotropies, it follows that the map $G_x\to \G_x(A)$ takes the associators $\Assoc_x(G)$ to the unit. But the last map factors as:
\[
\xymatrix{G_x \ar[rr]\ar[dr] &&\G_x(A) \\
& \G(\g_x)\ar[ur]}
\]
Since the kernel of $\G(\g_x)\to\G_x(A)$ is the group $\NN_x(A)$, the result follows.
\end{proof}

One would like to obtain equality in the previous proposition. For that, one needs to construct a lift of the map $G\to \G(A)$ to the space of $A$-paths:
\[
\xymatrix{
 & & P(A) \ar[d] \\
G\ar@{-->}[urr]^P \ar[rr] && \G(A) 
}
\]
which is multiplicative in the following sense: if $w = (g_1,\dots,g_k)\in W(G)$ is a well-formed word on $G$, write $P(w) = P(g_k)\circ\cdots \circ P(g_1)$ where $\circ$ denotes concatenation of $A$-paths. Then if $w_1$ and $w_2$ are equivalent words we would like $P(w_1)$ and $P(w_2)$ to be $A$-homotopic paths. It is possible to further shrink the local groupoid $G$, so that (H1)-(H3) are satisfied and $G$ carries such a lift. One then says that $G$ is \emph{shrunk}. The lift is constructed with the help of a Riemannian structure on $M$ and an $A$-connection. We refer to \cite{FM20} for details.

The fundamental result connecting integrability with associators can now be stated as follows:

\begin{theorem}[\cite{FM20}]
\label{thm:associators}
Let $G\tto M$ be a shrunk local Lie groupoid with Lie algebroid $A$. For $x\in M$, consider $G_x$ as a subset of $\G(\g_x)$ using the natural map $G_x \to \G(\g_x)$. Then:
\[ \Assoc_x(G) = \NN_x(A) \cap G_x .\]
\end{theorem}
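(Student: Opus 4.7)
The plan is to combine Proposition \ref{prop:assoc-is-mon}, which supplies $\Assoc_x(G) \subset \NN_x(A) \cap G_x$, with a converse obtained using the multiplicative lift $P: G \to P(A)$ that comes with the shrunk hypothesis. For the reverse inclusion, take $g \in G_x \cap \NN_x(A)$: by the definition of the monodromy group, the $\g_x$-path representing $g$ under the natural map $G_x \hookrightarrow \G(\g_x)$, when pushed into $\G(A)$, is the unit, and so $P(g)$ is $A$-homotopic inside $P(A)$ to the constant $A$-path at $x$ (which is $P(1_x)$). The goal is to promote this $A$-homotopy to a combinatorial equivalence $(g) \sim (1_x)$ in $W(G)$, and then to exhibit a well-formed word $W$ that admits contractions down both to $(g)$ and to $(1_x)$, witnessing $g \in \Assoc_x(G)$.

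The key technical lemma is a partial converse to the multiplicativity of $P$: whenever $w, w' \in W(G)$ have $A$-homotopic images $P(w)$ and $P(w')$, then $w \sim w'$ under the equivalence generated by contractions and expansions. Granting this lemma with $w = (g)$ and $w' = (1_x)$, a confluence-type argument (all expansions can be moved ahead of all contractions, since two contractions from a common ancestor give the same result by 3-associativity) produces the required common expansion $W$. To prove the lemma, the plan is to discretize an $A$-homotopy $\Phi: T(I\times I)\to A$ from $P(w)$ to $P(w')$ into a grid on $I\times I$: each horizontal edge of the grid yields a short $A$-path, which lifts via the exponential map of the $A$-connection used to define $P$ back to an arrow in $G$; each small rectangle yields a quadrilateral identity whose four sides sit inside $\U$ and $\V$ provided the grid is fine enough, and by 3-associativity its two opposite-side concatenations differ by a single contraction/expansion move in $W(G)$. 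Chaining these row by row turns the word spelled by the bottom edge of the grid into the word spelled by the top edge, yielding the desired $w \sim w'$.

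The main obstacle lies in the quantitative step of choosing the grid uniformly fine: one needs simultaneous control --- via the Riemannian structure on $M$ and the $A$-connection entering the construction of $P$ in \cite{FM20} --- over the sizes of the lifted edges and quadrilaterals relative to the domains $\U$, $\V$ of the local operations and the neighborhoods on which the required 3-associativities hold. Once these uniform bounds are in place, the geometric input of the shrunk lift $P$ fits together with the combinatorial structure of the associative completion, and the remaining assembly is routine; the resulting equivalence $(g) \sim (1_x)$, combined with the confluence step that builds a common expansion, gives $g \in \Assoc_x(G)$ and completes the reverse inclusion.
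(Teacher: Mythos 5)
The paper itself does not prove Theorem \ref{thm:associators}: it states the result and immediately refers to \cite{FM20} ``for the proof of this result and a simplicial interpretation'', so there is no in-paper argument to compare against. On its own terms, your architecture is the intended one --- Proposition \ref{prop:assoc-is-mon} gives $\Assoc_x(G)\subset\NN_x(A)\cap G_x$, and the converse is to be extracted from the multiplicative lift $P$ by discretizing an $A$-homotopy --- but two steps are genuinely incomplete. The first is the confluence step, whose stated justification is wrong: two distinct contractions of a common word do not ``give the same result'' ($\partial_1(a,b,c)=(ab,c)$ and $\partial_2(a,b,c)=(a,bc)$ are equivalent but different words). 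What you actually need is that a zigzag of contractions and expansions realizing $(g)\sim(1_x)$ can be converted into a \emph{single} word admitting contraction sequences to both $(g)$ and $(1_x)$, since that is what the definition of an associator demands. The elementary move required is the co-diamond --- two words with a common contraction admit a common expansion --- and when the two contractions overlap this forces you to manufacture an intermediate element of the form $w_i^{-1}v_i$ and to have the relevant products defined, which can fail in a local groupoid. This is exactly why the paper is careful to say only that $\Assoc(G)$ is \emph{contained in} the kernel of the completion map $G\to\AsCo(G)$, rather than equal to it: passing from ``kernel element'' to ``associator'' is a real lemma, not a formality. You must either prove this rearrangement lemma under the standing hypotheses and the shrunk condition, or organize the grid argument so that it directly outputs a common expansion of $(g)$ and $(1_x)$ rather than a zigzag.

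The second gap is that your ``key technical lemma'' --- $A$-homotopic images under $P$ force equivalence of words --- carries essentially all the mathematical content of the reverse inclusion, and its proof is only sketched. In particular, the claim that each small rectangle of the grid yields an identity $\gm(a,b)=\gm(c,d)$ in $G$ ``by 3-associativity'' misattributes the input: 3-associativity says nothing about why the two concatenations around a small $A$-homotopy square agree. What is needed is that sufficiently small $A$-homotopies integrate to honest identities in the local groupoid (a local Lie II--type statement, i.e.\ that near the units $G$ agrees with the $A$-path local integration), and this must be combined with the uniform fineness estimates that you explicitly defer. As written, the forward inclusion is complete but the reverse inclusion is a plausible plan rather than a proof.
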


For the proof of this result and a simplicial interpretation we refer to \cite{FM20}.

\subsection{Extending local multiplicative forms to global multiplicative forms}

Having established the conditions for the associative completion $\AsCo(G)$ to be smooth, one has the natural problem of finding when a local multiplicative form on $G$ extends to a global multiplicative form on $\AsCo(G)$. Since $\AsCo(G)$ may not have 1-connected $\t$-fibers, the answer is perhaps surprising: always!

\begin{theorem}[\cite{BG16}]
\label{thm:local:global:forms}
Let $G$ be a local Lie groupoid with uniformly discrete associators and connected $\t$-fibers. Given any multiplicative form $\Omega\in\Omega^k(G)$ there exists a unique multiplicative form $\widetilde{\Omega}\in\Omega^k(\AsCo(G))$ whose pullback under the completion map $G\to\AsCo(G)$ is $\Omega$.
\end{theorem}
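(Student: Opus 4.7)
My plan is to first prove uniqueness, then construct the form $\widetilde{\Omega}$ explicitly using the word representation of $\AsCo(G)$, and finally verify well-definedness, smoothness, and multiplicativity of the construction.

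Uniqueness is immediate. By Theorem \ref{thm:AC:smooth} the completion map $\pi:G\to\AsCo(G)$ is a local diffeomorphism, so its image generates $\AsCo(G)$ as a groupoid. If $\widetilde{\Omega}$ and $\widetilde{\Omega}'$ are two multiplicative $k$-forms with $\pi^*\widetilde{\Omega}=\pi^*\widetilde{\Omega}'=\Omega$, then by multiplicativity they agree on any product $\pi(g_1)\cdots\pi(g_n)$; since every element of $\AsCo(G)$ is such a product, $\widetilde{\Omega}=\widetilde{\Omega}'$.

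For existence, for each $n\ge1$ I would introduce the $n$-fold fiber product $G^{(n)}:=G\timesst\cdots\timesst G$, the iterated multiplication map
\[ \mu_n:G^{(n)}\longrightarrow\AsCo(G),\quad (g_1,\dots,g_n)\mapsto [g_1,\dots,g_n], \]
and the form $\Omega^{(n)}:=\sum_{i=1}^n\pr_i^*\Omega\in\Omega^k(G^{(n)})$. The idea is to define $\widetilde{\Omega}$ at $\tilde{g}=[g_1,\dots,g_n]$ by pushing $\Omega^{(n)}|_{(g_1,\dots,g_n)}$ forward through $\mu_n$, and to check that this does not depend on the word representative or on the lift of tangent vectors.

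The core difficulty is this well-definedness of the pushforward, which amounts to two verifications. First, the equivalence relation on $W(G)$ is generated by single contractions $c_i:(g_1,\dots,g_i,g_{i+1},\dots,g_n)\mapsto(g_1,\dots,g_ig_{i+1},\dots,g_n)$, defined on the open locus in $G^{(n)}$ where $g_ig_{i+1}$ exists in $G$. A direct computation applying the multiplicativity identity $\gm^*\Omega=\pr_1^*\Omega+\pr_2^*\Omega$ to the pair $(g_i,g_{i+1})$ yields
\[ c_i^*\Omega^{(n-1)}=\Omega^{(n)} \]
on the domain of $c_i$, so the family $\{\Omega^{(n)}\}$ is compatible under contractions and expansions. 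Second, one must lift tangent vectors at $\tilde{g}\in\AsCo(G)$ to $G^{(n)}$: using the chart construction in the proof of Theorem \ref{thm:AC:smooth} (where uniform discreteness of $\Assoc(G)$ is crucial to ensure that varying a single factor $g_k$ along a local bisection yields a local diffeomorphism onto a neighborhood of $\tilde{g}$), the lift is unique modulo vectors killed by $\Omega^{(n)}$ thanks to the first compatibility. Together these give a smooth $k$-form $\widetilde{\Omega}$ on $\AsCo(G)$ with $\mu_n^*\widetilde{\Omega}=\Omega^{(n)}$ for every $n$, and in particular $\pi^*\widetilde{\Omega}=\mu_1^*\widetilde{\Omega}=\Omega$. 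Multiplicativity of $\widetilde{\Omega}$ follows from the concatenation identity $[w_1]\cdot[w_2]=[w_1,w_2]$ in $\AsCo(G)$ combined with the decomposition $\Omega^{(m+n)}=\pr_{1,\dots,m}^*\Omega^{(m)}+\pr_{m+1,\dots,m+n}^*\Omega^{(n)}$ on $G^{(m)}\timesst G^{(n)}$. The hardest point I expect is the tangent-lift verification, which is where the uniform discreteness hypothesis enters essentially.
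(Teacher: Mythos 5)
Your proposal is correct and follows essentially the same route as the paper: the same word spaces $G^{(n)}$, the same forms $\Omega^{(n)}=\sum_i\pr_i^*\Omega$, the same compatibility $\partial_i^*\Omega^{(n-1)}=\Omega^{(n)}$ under contractions, the same pushforward through the quotient map using that its kernel is spanned by the kernels of the face maps, and the same juxtaposition argument for multiplicativity. The only cosmetic difference is that you argue uniqueness via generation by the image of the completion map, whereas the paper invokes the standard fact that multiplicative forms agreeing near the units coincide (or an induction with degeneracy maps); these are interchangeable.
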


A version of this result for globally associative local groupoids -- see Remark \ref{rem:Bailey:Gualtieri} -- appears in the appendix of \cite{BG16}. The proof is only sketched there, so we will give a detailed proof below. 

In general, given a multiplicative form defined on an open neighborhood $U$ of the units of a Lie groupoid $\G\tto M$, it is not possible to extend it to a multiplicative form on $\G$ unless $\G$ has 1-connected $\t$-fibers. Underlying Theorem \ref{thm:local:global:forms} is the fact that the local groupoid $G$ ``captures'' the topology of $\AsCo(G)$. 

To make this more explicit, consider a Lie groupoid $\G\tto M$ with connected $\t$-fibers. By Proposition \ref{prop:generate:inverses}, any neighborhood $M\subset U\subset \G$ generates $\G$. If we consider $U$ as a local groupoid obtained by shrinking $\G$, then the universal property of $\AsCo(-)$ implies that the inclusion $i:U\hookrightarrow \G$ induces a Lie groupoid morphism:
\[ \widehat{i}:\AsCo(U)\to \G, \]
which is a \emph{surjective} local diffeomorphism.  We call $U\subset \G$ a \emph{full neighborhood} if this map is an isomorphism. Hence, Theorem \ref{thm:local:global:forms} has the following consequence:

\begin{corollary}
\label{cor:local:global:forms}
Let $\G\tto M$ be a Lie groupoid with connected $\t$-fibers. Every multiplicative form defined on a full neighborhood extends to a unique multiplicative form defined on $\G$.
\end{corollary}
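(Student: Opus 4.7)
The plan is to deduce this as a direct consequence of Theorem \ref{thm:local:global:forms} applied to the full neighborhood $U$ viewed as a local Lie groupoid, using the isomorphism $\widehat{i}:\AsCo(U)\xrightarrow{\sim}\G$ provided by the fullness hypothesis to transport the extended form back to $\G$.

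First I would verify that the hypothesis of Theorem \ref{thm:local:global:forms} is satisfied by $U$. Viewing $U$ as the local Lie groupoid obtained by shrinking $\G$ (as in Example \ref{ex:restriction}), fullness means that the canonical groupoid morphism $\widehat{i}:\AsCo(U)\to\G$ is an isomorphism of Lie groupoids; in particular $\AsCo(U)$ is smooth. By Theorem \ref{thm:AC:smooth} this forces $\Assoc(U)$ to be uniformly discrete in $U$, so Theorem \ref{thm:local:global:forms} applies: any multiplicative $k$-form $\Omega\in\Omega^k(U)$ admits a unique multiplicative extension $\overline{\Omega}\in\Omega^k(\AsCo(U))$ whose pullback under the completion map $c:U\to\AsCo(U)$ equals $\Omega$.

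Next I would define $\widetilde{\Omega}:=(\widehat{i}^{-1})^*\overline{\Omega}\in\Omega^k(\G)$. Since $\widehat{i}$ is a Lie groupoid isomorphism, $\widetilde{\Omega}$ is automatically multiplicative. To see that $\widetilde{\Omega}$ restricts to $\Omega$ on $U$, observe that the inclusion $i:U\hookrightarrow\G$ factors as $i=\widehat{i}\circ c$, so $i^*\widetilde{\Omega}=c^*\,\widehat{i}^{\,*}\widetilde{\Omega}=c^*\overline{\Omega}=\Omega$. This gives existence.

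For uniqueness, suppose $\widetilde{\Omega}_1,\widetilde{\Omega}_2\in\Omega^k(\G)$ are two multiplicative extensions of $\Omega$. Pulling back along $\widehat{i}$, we obtain two multiplicative forms $\widehat{i}^{\,*}\widetilde{\Omega}_1,\widehat{i}^{\,*}\widetilde{\Omega}_2$ on $\AsCo(U)$; each of them pulls back along the completion map $c$ to $i^*\widetilde{\Omega}_j=\Omega$. The uniqueness clause of Theorem \ref{thm:local:global:forms} then forces $\widehat{i}^{\,*}\widetilde{\Omega}_1=\widehat{i}^{\,*}\widetilde{\Omega}_2$, and since $\widehat{i}$ is an isomorphism we conclude $\widetilde{\Omega}_1=\widetilde{\Omega}_2$. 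The step that carries all the actual content is, of course, the invocation of Theorem \ref{thm:local:global:forms}; everything else is bookkeeping around the identification $\AsCo(U)\cong\G$, and the only point that needs care is checking that pullback behaves correctly along the factorization $i=\widehat{i}\circ c$, which is immediate from the universal property that defines $\widehat{i}$.
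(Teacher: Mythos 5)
Your proposal is correct and follows exactly the route the paper intends (the corollary is stated there as an immediate consequence of Theorem \ref{thm:local:global:forms} via the isomorphism $\widehat{i}:\AsCo(U)\to\G$, with the proof left implicit). The only simplification worth noting is that $U$, being a shrinking of a global Lie groupoid, is globally associative, so $\Assoc(U)$ consists only of units and uniform discreteness is immediate without appealing to the converse direction of Theorem \ref{thm:AC:smooth}.
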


The following example illustrates the difference between simple neighborhoods and full neighborhoods, already in the case of Lie groups.

\begin{example}
Consider the Lie group $\S^1:=\{z\in\C:|z|=1\}$ and for $a\in (0,\pi]$ set:
\[ U_a:=\big\{ e^{it}\in \S^1: t\in (-a,a) \big\}. \]
We claim that $U_\pi$ is a full neighborhood while $U_{\pi/2}$ is not. 

For the local Lie group structure on $U_{\pi/2}$, obtained by shrinking $\S^1$, the map:
\[ \phi:U_{\pi/2}\to (\R,+), \quad e^{it}\mapsto t, \]
is easily seen to be a morphism of local Lie groups. Hence, there is an induced Lie group morphism:
\[ \widehat{\phi}:\AsCo(U_{\pi/2})\to \R, \]
which is local diffeomorphism. Since $\AsCo(U_{\pi/2})$ is connected and $\R$ is 1-connected, we must have $\AsCo(U_{\pi/2})\simeq \R$, so that $U_{\pi/2}$ is not a full neighborhood. Notice that the local morphism $\phi$ does not extend to a Lie group morphism $\S^1\to \R$ since the only such morphism is the trivial one -- this would be the case $k=0$ in Corollary \ref{cor:local:global:forms}, so the corollary does not hold for $U_{\pi/2}$.

For the local group structure on $U_{\pi}$ the map:
\[ \phi:U_{\pi}\to (\R,+), \quad e^{it}\mapsto t, \]
is not anymore a morphism of local Lie groups: for example, $2e^{i\frac{3\pi}{4}}=e^{-i\frac{\pi}{2}}$ but $2 \phi(e^{i\frac{3\pi}{4}})=\frac{3\pi}{2}\not=-\frac{\pi}{2}=\phi(2e^{i\frac{3\pi}{4}})$. On the other hand, we can consider the inclusion:
\[  i:U_{\pi}\hookrightarrow \S^1\] 
which is obviously a local Lie group morphism. The induced Lie group morphism $\widehat{i}:\AsCo(U_{\pi})\to \S^1$
is surjective and local diffeomorphism. One checks easily that this map is injective, so $\AsCo(U_{\pi})\simeq \S^1$ and $U_\pi$ is full. 

%
\end{example}

We now turn to the proof of Theorem \ref{thm:local:global:forms}. The idea is simple: we extend  $\Omega$  to a  form on $\AsCo(G)$ by requiring the multiplicativity condition. The problem of course is that two words can represent the same element of $\AsCo(G)$, so we need some ``book keeping'', and for that we proceed as follows.

We denote the set of well-formed words of length $l$ by:
\[ G^{(l)}:=\underbrace{G\timesst \cdots \timesst G}_{\text{$l$ times}}, \]
so that $W(G):=\bigsqcup_{l=1}^{+\infty} G^{(l)}$, and we have the quotient map:
\[ \Phi:W(G)\to \AsCo(G),\quad \Phi(g_1,\dots,g_l):=[(g_1,\dots,g_l)]. \]
We also introduce face maps for $1\le i \le l-1$:
\[ \partial_i: \U^{(l)}_i\to G^{(l-1)},\quad \partial_i(g_1,\dots,g_i,g_{i+1},\dots,g_l)=(g_1,\dots,g_ig_{i+1},\dots,g_l), \]
defined on the open subset $\U^{(l)}_i\subset G^{(l)}$ given by: 
\[ \U^{(l)}_i=\{(g_1,\dots,g_i,g_{i+1},\dots,g_l)\in G^{(l)}: (g_i,g_{i+1})\in\U\}. \]
The face maps are submersions and a word $w'$ is a contraction of the word $w$ if for some $i$ one has:
\[ w'=\partial_i w. \]
These contractions/expansions then generate the equivalence $\sim$, and given words $w_1,w_2\in W(G)$ one has $\Phi(w_1)=\Phi(w_2)$ if and only if $w_1\sim w_2$.

After these preliminaries, consider the family of forms $\Omega^{(l)}\in\Omega^k(G^{(l)})$ defined by:
\begin{equation}
\label{eq:stage:multipl} 
\Omega^{(l)}:=\sum_{j=1}^l \pr_j^*\Omega,
\end{equation}
where $\pr_i:G^{(l)}\to G$ denotes the projection in the factor $i$. Using the multiplicativity of $\Omega$ we find:

\begin{lemma}
The forms $\Omega^{(l)}\in\Omega^k(G^{(l)})$ satisfy:
\begin{equation}
\label{eq:face:multipl} 
\partial_i^*\Omega^{(l-1)}=\Omega^{(l)}, \quad (1\le i\le l-1),
\end{equation}
on the open subset of $\U^{(l)}_i$ where $\partial_i$ is defined.
\end{lemma}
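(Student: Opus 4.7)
The plan is to prove the identity by a direct pullback computation, exploiting the fact that the definition of $\Omega^{(l)}$ is designed precisely so that the face maps intertwine the $\Omega^{(l)}$'s as soon as $\Omega$ itself is multiplicative. The only actual input needed from geometry is the defining identity $\gm^*\Omega=\pr_1^*\Omega+\pr_2^*\Omega$ on $\U\subset G\timesst G$; everything else is combinatorial bookkeeping on indices of projections.

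Concretely, I would start by analyzing the composition $\pr_j\circ\partial_i:G^{(l)}\to G$ on the open subset $\U_i^{(l)}$, splitting into three cases according to the position of $j$ relative to $i$. For $j<i$ the contraction at position $i$ does not touch the $j$-th slot, so $\pr_j\circ\partial_i=\pr_j$; for $j>i$ the $j$-th slot of the contracted word is the $(j+1)$-th slot of the original word, so $\pr_j\circ\partial_i=\pr_{j+1}$; and for $j=i$ the relevant map is $\gm\circ(\pr_i,\pr_{i+1})$. Pulling back the definition of $\Omega^{(l-1)}$ through $\partial_i$ and using these three identities gives
\[ \partial_i^*\Omega^{(l-1)}=\sum_{j=1}^{i-1}\pr_j^*\Omega+(\pr_i,\pr_{i+1})^*\gm^*\Omega+\sum_{j=i+2}^{l}\pr_j^*\Omega. \]

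Next I would invoke the multiplicativity of $\Omega$ on the middle term: $\gm^*\Omega=\pr_1^*\Omega+\pr_2^*\Omega$ pulled back along $(\pr_i,\pr_{i+1})$ yields exactly $\pr_i^*\Omega+\pr_{i+1}^*\Omega$. Substituting this back collapses the three sums into $\sum_{j=1}^l\pr_j^*\Omega=\Omega^{(l)}$, which is the desired equality. All of this takes place on the open set where $\partial_i$ is defined, i.e.\ on $\U_i^{(l)}$, which is exactly where $(\pr_i,\pr_{i+1})$ lands in the domain $\U$ of $\gm$ and where the multiplicativity identity is available.

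I do not expect any serious obstacle here: the lemma is essentially a tautological reformulation of multiplicativity, and the only thing to be careful about is the reindexing of projections after a contraction, which is a finite case-check. The real content of this setup lies not in the lemma itself but in its downstream use, namely that the $\Omega^{(l)}$ descend compatibly through the quotient $\Phi:W(G)\to\AsCo(G)$ to define the global extension $\widetilde\Omega$.
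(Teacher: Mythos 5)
Your proof is correct and follows essentially the same route as the paper: the case analysis of $\pr_j\circ\partial_i$ for $j<i$, $j=i$, $j>i$ and the substitution of $\gm^*\Omega=\pr_1^*\Omega+\pr_2^*\Omega$ in the middle term is exactly the paper's argument. Nothing to add.
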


\begin{proof}
Notice that:
\[
\pr_j\circ\, \partial_i=
\left\{
\begin{array}{lr}
\pr_j, & \textrm{for }j<i,   \\
\gm\circ(\pr_i\times\pr_{i+1}),&\textrm{for }j=i,  \\
\pr_{j+1},& \textrm{for }j>i.
\end{array}
\right.
\]
So using the multiplicativity of $\Omega$ we find:
\begin{align*} 
\partial_i^*\Omega^{(l-1)} 
&=\sum_{j<i} \pr_j^*\Omega+(\pr_i\times\pr_{i+1})^*\gm^*\Omega+\sum_{j>i} \pr_{j+1}^*\Omega\\
&=\sum_{j<i} \pr_j^*\Omega+(\pr_i\times\pr_{i+1})^*(\pr_1^*\Omega+\pr_2^*\Omega)+\sum_{j>i} \pr_{j+1}^*\Omega\\
&=\sum_{j=1}^{l}\pr_j^*\Omega=\Omega^{(l)}. 
\end{align*}
\end{proof}

\begin{proof}[Proof of Theorem \ref{thm:local:global:forms}]
Let us start by remark that each $G^{(l)}$ is a manifold and the restriction of the map $\Phi:W(G)\to \AsCo(G)$ to each $G^{(l)}$ is a submersion. For $w\in G^{(l)}$ one finds:
\[ \Ker\d_w\Phi=\bigoplus_i \Ker\d_w \partial_i, \]
where the sum is over those $1\le i\le l-1$ for which $w\in\U^{(l)}_i$. By the previous lemma, one has that:
\[ i_v\Omega^{(l)}=0, \quad \text{if } v\in\Ker\d_w \partial_i. \]
Hence, for each $w\in G^{(l)}$ we obtain a form $\widetilde{\Omega}^w\in \wedge^kT_{\Phi(w)}\AsCo(G)$ such that:
\[ \Omega^{(l)}_w=(\d_w \Phi)^*\widetilde{\Omega}^w. \]
If $g=\Phi(w_1)=\Phi(w_2)$ then $w_1\sim w_2$, and applying successively \eqref{eq:face:multipl} we conclude that we must have $\widetilde{\Omega}^{w_1}=\widetilde{\Omega}^{w_2}$. It follows that one has well-defined k-form $\widetilde{\Omega}$ in $\AsCo(G)$, which satisfies:
    \[ \Phi^*\widetilde{\Omega}|_{G^{(l)}}=\Omega^{(l)}. \]
    This shows that $\widetilde{\Omega}$ is a smooth differential $k$-form.

Notice that for $l=1$ the map $\Phi:G\to \AsCo(G)$ is just the completion map, so $\widetilde{\Omega}$ also also extends $\Omega$, and we are only left to check that it is multiplicative. 

Multiplicativity follows by observing that for any $l_1,l_2\ge 1$ we have a commutative diagram:
\[
\xymatrix{
G^{(l_1)}\times G^{(l_2)}\ar[r]^{J} \ar[d]_{\Phi\times\Phi} & G^{(l_1+l_2)}\ar[d]^{\Phi}\\
\AsCo(G)^{(2)}\ar[r]_{\gm} & \AsCo(G)
}
\]
where $J$ is juxtaposition of words. From the definition of $\Omega^{(l)}$ one finds that:
\[ J^* \Omega^{(l_1+l_2)}=\pr_{G^{(l_1)}}^*\Omega^{(l_1)}+\pr_{G^{(l_2)}}^*\Omega^{(l_2)}, \]
and this implies the multiplicativity of $\widetilde{\Omega}$.

Finally, the uniqueness of the extension $\Omega$ follows from the fact that (i) if $G$ is $\t$-connected then so is $\AsCo(G)$ and (ii) two multiplicative forms on a $\t$-connected Lie groupoid which coincide in some neighborhood of the identity must actually coincide. 
\end{proof}

One can also use Theorem \ref{thm:local:global:forms} to recover the integrability of IM-forms to multiplicative forms (or other infinitesimal multiplicative structures) as stated in Theorem \ref{thm:linear:form:IM:forms}. Given a Lie groupoid $\G\tto M$ with 1-connected $\t$-fibers, one first integrates the IM-form to a multiplicative form in some neighborhood $M\subset U\subset \G$ as in Section \ref{sec:local:integration:forms}. Then applies Theorem \ref{thm:local:global:forms} using the following result:

\begin{proposition}
Let $\G\tto M$ be a Lie groupoid with 1-connected $\t$-fibers. Then any neighborhood  $M\subset U\subset \G$ is full.
\end{proposition}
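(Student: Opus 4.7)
The plan is to show that the canonical morphism $\widehat{i}:\AsCo(U)\to\G$ is a bijection. Since it is already known to be a surjective local diffeomorphism between Lie groupoids covering the identity on $M$, bijectivity will promote it to an isomorphism, proving that $U$ is full. Note first that, because $U$ is a shrinking of $\G$, its multiplication inherits global associativity from $\G$, so $\Assoc(U)=M$ is trivially uniformly discrete and $\AsCo(U)$ is a genuine Lie groupoid by Theorem \ref{thm:AC:smooth}. Since $\widehat{i}$ restricts to the identity on $M$, injectivity reduces to injectivity on each $\t$-fiber. The strategy is to show that the restriction $\widehat{i}:\t^{-1}_{\AsCo(U)}(x)\to\t^{-1}_{\G}(x)$ is a covering map from a connected manifold onto a 1-connected one, and hence a bijection.

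As a preliminary reduction, we may replace $U$ by any smaller neighborhood $U'\subset U$ with $\t$-connected fibers: by Proposition \ref{prop:generate:inverses} applied to the local Lie groupoid $U$, such a $U'$ generates $U$, so the induced morphism $\AsCo(U')\to\AsCo(U)$ is surjective, and fullness of $U'$ then implies fullness of $U$. Assuming henceforth that $U$ has $\t$-connected fibers, the $\t$-fiber $\t^{-1}_{\AsCo(U)}(x)$ is connected: given $[g_1,\ldots,g_n]$ with $\t(g_1)=x$, one joins it to $1_x$ by concatenating in $\AsCo(U)$ the paths $t\mapsto[g_1,\ldots,g_k,\gamma_k(t)]$, where $\gamma_k$ is a path in $U\cap\t^{-1}(\s(g_k))$ from $1_{\s(g_k)}$ to $g_{k+1}$ (for $k=0$, a path in $U\cap\t^{-1}(x)$ from $1_x$ to $g_1$). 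All these paths remain in the $\t$-fiber over $x$.

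For the covering property, fix $g\in\t^{-1}_\G(x)$ and any $\widetilde{g}=[g_1,\ldots,g_n]\in\widehat{i}^{-1}(g)$. Choose a neighborhood $W$ of $1_{\s(g)}$ in $U\cap\t^{-1}(\s(g))$ small enough that left translation gives a diffeomorphism $L_g:W\to gW$ onto a neighborhood of $g$ in $\t^{-1}_\G(x)$. Then $h\mapsto[g_1,\ldots,g_n,h]$ is a smooth local section of $\widehat{i}$ over $gW$ passing through $\widetilde{g}$. Since $\widehat{i}$ is a local diffeomorphism, $\widehat{i}^{-1}(g)$ is discrete, and by shrinking $W$ further we may ensure that the sections through distinct preimages are pairwise disjoint, exhibiting $gW$ as an evenly covered neighborhood. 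Therefore $\widehat{i}|_{\t^{-1}_{\AsCo(U)}(x)}$ is a covering of a 1-connected space by a connected space, hence a bijection.

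The main obstacle is this last covering step: a surjective local diffeomorphism is not automatically a covering map, so one must use the groupoid multiplication (left translation along $\t$-fibers) to actually produce evenly covered neighborhoods. The reduction to $\t$-connected fibers and the resulting connectedness of $\t^{-1}_{\AsCo(U)}(x)$ are routine bookkeeping, but essential for the final topological step.
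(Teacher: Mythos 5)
Your proof is correct and follows essentially the same route as the paper's: both pass to the surjective local diffeomorphism $\widehat{i}:\AsCo(U)\to\G$, prove that $\AsCo(U)$ has connected $\t$-fibers by the same path-concatenation trick over a generating sub-neighborhood, and then conclude from the $1$-connectedness of the $\t$-fibers of $\G$ that $\widehat{i}$ is an isomorphism. The only difference is that you spell out this last step as an explicit covering-space argument (the paper merely asserts it); there your verification that $gW$ is evenly covered should also record that the disjoint sections exhaust $\widehat{i}^{-1}(gW)$ in the fiber over $x$ --- right-translate any preimage of $gh$ by $[h^{-1}]$ to land in $\widehat{i}^{-1}(g)$ --- but this is routine.
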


\begin{proof}
Let $U$ be as in the statement and viewed it as local Lie groupoid $U\tto M$ obtained by shrinking $\G$. The inclusion $i:U\hookrightarrow \G$ induces a Lie groupoid morphism:
\[ \widehat{i}:\AsCo(U)\to \G, \]
which is a surjective local diffeomorphism. The Lie algebroid morphism induced by $\widehat{i}$ is just the identity. We claim that $\AsCo(U)$ has connected $\t$-fibers,  which together with the assumption that $\G$ has 1-connected $\t$-fibers, implies that $\widehat{i}$ must be an isomorphism, proving that $U$ is a full neighborhood.

To prove the claim, choose $M\subset V\subset U$ a neighborhood such that $\t^{-1}(x)\cap V$ is connected for all $x\in M$ and $V^{-1}U\subset U$. Since $\G\tto M$ is $\t$-connected, by Proposition \ref{prop:generate:inverses}, $V$ generates $U$. Given any element $g=[u_1,\dots,u_n]\in \AsCo(U)$ with $\t(g)=x$,  we can factor each $u_i$ as:
\[ u_i=v^i_{1}(v^i_{2}(\cdots(v^i_{k_i-1}v^i_{k_i})))\qquad (v^i_j\in V). \]
We conclude that we can write $g$ as:
\[ g=[v_1,\dots,v_N] \quad (v_i\in V). \]
Now choose paths:
\[ \gamma_i:[0,1]\to V,\quad \gamma_i(0)=1_{\s(v_{i-1})},\ \gamma_i(1)=v_i,\ \t(\gamma_i(t))=\s(v_{i-1}). \]
Then, we obtain paths $g_i:[0,1]\to \AsCo(U)$ in $\t^{-1}(x)$ by setting
\[ g_i(t):=[v_1,\dots,v_{i-1},\gamma_i(t)]. \]
Each such path joins $[v_1,\dots,v_{i-1},v_i]$ to $[v_1,\dots,v_{i-1},1_{\s(v_{i-1})}]=[v_1,\dots,v_{i-1}]$. The concatenation of these paths is a path in $\t^{-1}(x)\subset \AsCo(U)$ joining $g$ to $1_x$.
\end{proof}


\bibliographystyle{amsplain}
\bibliography{bibliography}
\end{document}